\tikzstyle{1function}=[fill=white, draw=black, shape=rectangle, minimum width=0.75cm, minimum height=1 cm]
\tikzstyle{2function}=[fill=white, draw=black, shape=rectangle, minimum width=1cm, minimum height=1 cm]
\tikzstyle{3function}=[fill=white, draw=black, shape=rectangle, minimum width=1.5cm, minimum height=1cm]
\tikzstyle{multi function}=[fill=white, draw=black, shape=rectangle, minimum width=5cm, minimum height=1cm]
\tikzstyle{multi function small}=[fill=white, draw=black, shape=rectangle, minimum width=4cm, minimum height=1cm]
\tikzstyle{Multi function smaller}=[fill=white, draw=black, shape=rectangle, minimum width=3.5 cm, minimum height=1 cm]
\tikzstyle{black dashed}=[-, draw=black, dashed]
\numberwithin{equation}{section}
\newtheorem{theorem}{Theorem}[section]
\newtheorem{lemma}[theorem]{Lemma}
\newtheorem{proposition}[theorem]{Proposition}
\newtheorem{corollary}[theorem]{Corollary}
\newtheorem{claim}[theorem]{Claim}
\theoremstyle{definition}
\newtheorem{definition}[theorem]{Definition}
\newtheorem{remark}[theorem]{Remark}
\newcommand{\ot}{\otimes}
\renewcommand{\C}{\mathcal{C}}
\newcommand{\N}{\mathbb{N}}
\newcommand{\one}{\textbf{1}}
\newcommand{\Z}{\mathbb{Z}}
\newcommand{\Ker}{\text{Ker}}
\newcommand{\End}{\text{End}}
\newcommand{\Hom}{\text{Hom}}
\newcommand{\M}{\text{M}}
\newcommand{\la}{\lambda}
\newcommand{\ol}{\overline}
\newcommand{\GL}{\text{GL}}
\newcommand{\Delot}{\Delta^{\otimes}}
\newcommand{\Tr}{\text{Tr}}
\newcommand{\Id}{\text{Id}}
\newcommand{\T}{\mathcal{T}}
\newcommand{\wt}{\widetilde}
\newcommand{\winf}{\mathfrak{U}}
\newcommand{\Ow}{\mathcal{O}}
\renewcommand{\Re}{\text{Re}}
\renewcommand{\Im}{\text{Im}}
\newcommand{\pair}{\text{pair}}
\newcommand{\Aut}{\text{Aut}}
\newcommand{\I}{\mathfrak{I}}
\renewcommand{\Vec}{\text{Vec}}
\newcommand{\Rep}{\text{Rep}}
\newcommand{\cchi}{\C_{\chi}}
\newcommand{\rad}{\text{rad}}
\newcommand{\D}{\mathcal{D}}
\newcommand{\Nn}{\mathcal{N}}
\newcommand{\Alt}{\text{Alt}}
\newcommand{\ul}{\underline}
\newcommand{\dcob}{\text{DCob}}
\newcommand{\Stab}{\text{Stab}}
\title[Interpolations and invariants]{Interpolations of monoidal categories and algebraic structures by invariant theory}
\author{Ehud Meir}
\address{Institute of Mathematics, University of Aberdeen, Fraser Noble Building, Aberdeen AB24 3UE, UK}
  \email{ehud.meir@abdn.ac.uk, meirehud@gmail.com}
\begin{document}
\maketitle
\begin{abstract}
In this paper we give a general construction of symmetric monoidal categories that generalizes Deligne's interpolated categories, the categories introduced by Knop, and the recent TQFT construction of Khovanov, Ostrik, and Kononov. 
The categories we will consider are generated by an algebraic structure. In a previous work by the author a universal ring of invariants $\mathfrak{U}$ for algebraic structures of a specific type was constructed. 
It was shown that any algebraic structure of this type in $\Vec_K$ gives rise to a character $\chi:\mathfrak{U}\to K$. In this paper we consider algebraic structure in general symmetric monoidal categories, not only in $\Vec_K$, and general characters on $\mathfrak{U}$. From any character $\chi:\mathfrak{U}\to K$ we construct a symmetric monoidal category $\C_{\chi}$, analogous to the universal construction from TQFT.
We then give necessary and sufficient conditions for a given character to arise from a structure in an abelian category with finite dimensional hom-spaces. We call such characters good characters. We show that if $\chi$ is good then $\C_{\chi}$ is abelian and semisimple, and that the set of good characters forms a $K$-algebra. We also show that the categories $\C_{\chi}$ contain all categories of the form $\Rep(G)$, where $G$ is reductive. The construction of $\C_{\chi}$ gives a way to interpolate algebraic structures, and also symmetric monoidal categories, in a way that generalizes Deligne's categories $\Rep(S_t)$, $\Rep(\GL_t(K))$, and $\Rep(\text{O}_t)$. We also explain how one can recover the recent construction of 2 dimensional TQFT of Khovanov, Ostrik, and Kononov, by the methods presented here. 
We give new examples, of interpolations of the categories $\Rep(\Aut_{\Ow}(M))$ where $\Ow$ is a discrete valuation ring with a finite residue field, and $M$ is a finite module over it. We also generalize the construction of wreath products with $S_t$, which was introduced by Knop. 
\end{abstract}

\section{Introduction}
In \cite{Deligne3} Deligne interpolated the categories $\Rep(S_n)$ and introduced the celebrated family of categories $\Rep(S_t)$ where $t\in K$ is any element in a characteristic zero field $K$. He also presented several other families of symmetric monoidal categories, such as $\Rep(\GL_t),\Rep(\text{O}_t)$ and $\Rep(\text{Sp}_t)$. These categories were studied and generalized by many different authors. See for example \cite{CO}, \cite{EAH}, \cite{Knop}; \cite{HK} for a study of algebras inside $\Rep(S_t)$; and \cite{KKO} and \cite{KS} for the relation to topological quantum field theories. 

In this paper we present a generalization of Deligne's construction, which generalizes the families presented by Deligne, Knop, and Khovanov, Ostrik, and Kononov, in \cite{Deligne3}, \cite{Knop}, and \cite{KKO} respectively. All the categories in this paper can be seen as generalizations of categories of the form $\Rep(\Aut(W))$, where $W$ is some algebraic structure. 
We construct interpolations of symmetric monoidal categories by interpolating the algebraic structure $W$, which we shall do by interpolating the scalar invariants of the structure, following \cite{meirUIR}. In this setting, the categories $\Rep(S_n)$ can be understood as $\Rep(K^n)$, where $K^n$ has the canonical commutative separable algebra structure (see Section \ref{sec:examples})

We recall here the definitions. Fix $((p_i,q_i))\in (\N^2)^r$. An \emph{algebraic structure} of type $((p_i,q_i))$ on a finite dimensional vector space $W$ is given by specifying structure tensors $x_i\in W^{p_i,q_i}:=W^{\ot p_i}\ot (W^*)^{\ot q_i}$ for $i=1,\ldots, r$. These can specify, for example, multiplication, comultiplication, counit, endomorphisms of $W$ et cetera. For example, a unital algebra has type $((1,2),(1,0))$, where the first tensor specifies the multiplication and the second tensor specifies the unit. We assume throughout the paper that the type of the algebraic structure is fixed. In Section \ref{sec:examples} we give examples of algebraic structures of various types. 

Fixing the vector space $W=K^d$, the set of structure tensors $(x_i)$ can be considered as a point in the affine space $U_d = \bigoplus_i W^{p_i,q_i}$. However, this point is not uniquely defined by the isomorphism type of the structure $(W,(x_i))$. The group $\GL_d(K)$ acts on $U_d$, and two points in $U_d$ define isomorphic structures if and only if they are in the same $\GL_d(K)$-orbit. 
The ring of invariants $K[U_d]^{\GL_d(K)}$ arises naturally in this context, as the characters $K[U_d]^{\GL_d(K)}\to K$ are in one to one correspondence with closed $\GL_d(K)$-orbits in $U_d$. In \cite{meirUIR} a universal ring of invariants $\winf$ was introduced. This ring captures simultaneously algebraic structures for different values of $d$ in the sense that for every $d$ there is an ideal $I_d\subseteq \winf$ such that $\winf/I_d\cong K[U_d]^{\GL_d(K)}$. The ring $\winf$ was also shown to be a Hopf algebra with some additional structure (see Section 8 of \cite{meirUIR}). It is a polynomial algebra generated by the set $P$ of closed connected diagrams arising from the structure tensors. 
 
An algebraic structure $(W,(x_i))$ of dimension $d$ induces a character $K[U_d]\to K$ and by restriction also a character $K[U_d]^{\GL_d(K)}\to K$. By pulling back we get a character $\chi_{(W,(x_i))}:\winf\to K$. We call this character the \emph{character of invariants} of $(W,(x_i))$. If the $\GL_d(K)$-orbit of $(W,(x_i))$ in $U_d$ is closed then the isomorphism class of $(W,(x_i))$ can be reconstructed from its character of invariants (see Section \ref{sec:vecstr}). 

This raises the question: what about all the other characters of $\winf$, which do not split via one of the quotients $\winf\to\winf/I_d$? We answer this question by studying algebraic structures in general symmetric monoidal categories, and not only in $\Vec_K$. 

If $\D$ is a rigid $K$-linear symmetric monoidal category we can consider algebraic structures of type $((p_i,q_i))$ inside $\D$. Similarly to the case of structures in $\Vec_K$, such a structure $(A,(y_i))$ will induce a character $\winf\to \End_{\D}(\one)$. In particular, if $\End_{\D}(\one)=K$ we get a character $\chi:\winf\to K$. We will say that $(A,(y_i))$ \emph{affords} the character $\chi$. 

For any character $\chi:\winf\to K$ we will construct in Section \ref{sec:constcchi} a symmetric monoidal category $\C_{\chi}$ that is tensor-generated by a structure that affords the character $\chi$, and its dual. This category will be $K$-linear, rigid, additive, Karoubi closed, but not necessarily abelian. It holds that $\End_{\C_{\chi}}(\one)=K$. 

To construct the category $\C_{\chi}$ we will first construct in Section \ref{sec:unicat} a category $\C_{univ}$, which is the universal category freely generated by a structure $(W,(x_i))$ of type $((p_i,q_i))$. In Section 5 of \cite{meirUIR} we constructed for every $p,q\in \N$ a vector space $Con^{p,q}$ of formal linear combinations of formal compositions of the structure tensors. We will use these spaces to construct the hom-spaces in $\C_{univ}$. 

The endomorphism ring $\End_{\C_{univ}}(\one)$ is $\winf$. 
The trace pairing in $\C_{univ}$ gives a pairing $\pair^{p,q}:Con^{p,q}\ot Con^{q,p}\to \winf$ (see Definition 7.5. in \cite{meirUIR}). We can compose this pairing with the character $\chi$ to get a pairing $\pair^{p,q}_{\chi}:Con^{p,q}\ot Con^{q,p}\to \winf\stackrel{\chi}{\to} K$. We then form the category $\C_{\chi}$ by dividing out by the negligible morphisms with respect to this pairing, and taking the Karoubian envelope. We call a morphism $T$ in $\C_{univ}$ $\chi$-negligible if it is negligible under the pairing induced by $\chi$. 

We thus see that every character $\chi:\winf\to K$ is the character of invariants of some algebraic structure in some $K$-linear rigid symmetric monoidal category. However, the symmetric monoidal categories one encounters when studying algebraic structures are often not only $K$-linear and rigid but also abelian with finite dimensional vector spaces as hom-spaces. We will call $\D$ a $K$-good category if it is a symmetric monoidal $K$-linear abelian rigid category in which the hom-spaces are finite dimensional and $\End_{\D}(\one)=K$. 

The first main theorem of this paper is the following:
\begin{theorem}\label{thm:main1}
Let $\chi:\winf\to K$ be a character. The following conditions are equivalent:
\begin{enumerate}
\item The category $\C_{\chi}$ is a semisimple $K$-good category.
\item The character $\chi$ is afforded by an algebraic structure in some $K$-good category.
\item The following two conditions are satisfied: 
\begin{itemize} 
\item The radical $\rad^{p,q}_{\chi}\subseteq Con^{p,q}$ of $\pair^{p,q}_{\chi}$ has finite codimension.
\item If $A\in \C_{univ}$ and $T:A\to A$ satisfies that $T^r$ is $\chi$-negligible for some $r>0$, then $\chi(\Tr(T))=0$. 
\end{itemize}
\end{enumerate}
\end{theorem}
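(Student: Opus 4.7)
I plan to prove the cycle $(1)\Rightarrow(2)\Rightarrow(3)\Rightarrow(1)$. The implication $(1)\Rightarrow(2)$ is essentially tautological: if $\C_\chi$ is $K$-good, it already contains the tautological structure, namely the image of $(W,(x_i))$, which affords $\chi$.

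For $(2)\Rightarrow(3)$, the plan is to exploit the universal property of $\C_{univ}$ to produce a $K$-linear symmetric monoidal functor $F:\C_{univ}\to\D$ sending $W$ to $A$, $x_i$ to $y_i$, and restricting to $\chi$ on $\winf=\End_{\C_{univ}}(\one)$. Part (a) will then follow from the finite-dimensionality of $\Hom_\D(A^{\otimes p},A^{\otimes q})$: $\ker F|_{Con^{p,q}}$ has finite codimension, and one checks in a line that $\ker F\subseteq \rad^{p,q}_\chi$, since $F(f)=0$ and $g\in Con^{q,p}$ force $\chi(\Tr(fg))=\Tr_\D(F(f)F(g))=0$. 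For part (b) I would reduce to the semisimple case by passing to the semisimplification $\D^{ss}$ of $\D$ (quotient by negligibles, Karoubi-completed); this is again $K$-good and is now semisimple, the image structure still affords $\chi$ (since $\End(\one)=K$ contains no nonzero negligibles), and it suffices to work there. Writing $\bar T=F(T)$, the hypothesis that $T^r$ is $\chi$-negligible gives $\Tr_\D(\bar T^{r+m})=0$ for every $m\ge 0$ (taking $S=T^m$). Since $\D^{ss}$ is semisimple and $K$ is algebraically closed, $\End_{\D^{ss}}(F(B))$ is a product of matrix algebras over $K$, so $\bar T$ admits a Jordan decomposition; expanding $\Tr_\D(\bar T^k)=\sum_\mu a_\mu\mu^k$, where $\mu$ ranges over distinct eigenvalues of $\bar T$ and $a_\mu$ is a sum of categorical dimensions weighted by algebraic multiplicities, a Vandermonde argument on the vanishing for $k\ge r$ forces $a_\mu=0$ for every $\mu\neq 0$, so $\chi(\Tr(T))=\Tr_\D(\bar T)=\sum_\mu\mu\, a_\mu=0$.

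For $(3)\Rightarrow(1)$ I would first use condition (a) to conclude that the hom-spaces of $\C_\chi$ are finite dimensional: on pairs of tensor powers of $W$ they are $Con^{p,q}/\rad^{p,q}_\chi$, and finite-dimensionality is preserved by Karoubian completion. For each $A\in\C_\chi$, the trace form $\tau(X,Y)=\Tr(XY)$ is a nondegenerate symmetric associative bilinear form on the finite-dimensional algebra $\End_{\C_\chi}(A)$ (nondegeneracy being built into the construction). Given $T$ in the Jacobson radical $J(\End_{\C_\chi}(A))$, $T$ is nilpotent; realising $A$ as a summand of some $B\in\C_{univ}$, extending $T$ by zero to $B$ and lifting to $T'\in\End_{\C_{univ}}(B)$, I would have $(T')^r$ $\chi$-negligible, whence by (b) $\Tr(T)=\chi(\Tr(T'))=0$. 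Applying the same reasoning to $TY$ (still nilpotent) yields $\tau(T,Y)=0$ for every $Y$, forcing $T=0$ by nondegeneracy. Hence $\End_{\C_\chi}(A)$ is semisimple; since $K$ is algebraically closed it splits as a product of matrix algebras, whose matrix idempotents decompose $A$ into simple objects. Combined with rigidity inherited from $\C_{univ}$ and finiteness of hom-spaces, this will show $\C_\chi$ is a semisimple $K$-good category.

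The principal obstacle is $(2)\Rightarrow(3)(b)$: extracting the vanishing of $\Tr_\D(\bar T)$ from the infinite family of trace identities in higher powers requires both the reduction to semisimple $\D^{ss}$, to unlock Jordan decomposition and the Vandermonde step, and a careful check that $\chi$ is preserved on the nose under semisimplification. The remainder of the argument, particularly $(3)\Rightarrow(1)$, rests on the standard principle that a finite-dimensional associative algebra equipped with a nondegenerate symmetric associative bilinear form on which every nilpotent element has zero trace must be semisimple; the whole proof thus pivots on the compatibility between categorical trace and nilpotency modulo negligibles.
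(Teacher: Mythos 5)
Your proposal is correct, and for $(1)\Rightarrow(2)$, $(2)\Rightarrow(3)$(a) and $(3)\Rightarrow(1)$ it follows essentially the paper's own route: $(1)\Rightarrow(2)$ via the tautological structure, $(2)\Rightarrow(3)$(a) via the kernel of $F_A$ being contained in the negligibles (Lemma \ref{lem:cofinite}), and $(3)\Rightarrow(1)$ via nondegeneracy of the trace pairing on endomorphism algebras of $\wt{\C}_{\chi}$ together with condition (b) killing the Jacobson radical (Proposition \ref{prop:2halfmainthm1}). The genuine divergence is in $(2)\Rightarrow(3)$(b). The paper stays inside the abelian category $\D$ and uses Corollary \ref{cor:endshape}: the series $\sum_n\Tr(F_A(T)^n)X^n$ is a rational function $P/Q$ with $\deg P\leq\deg Q$, $Q$ squarefree, $Q(0)\neq 0$ (proved by induction on the minimal polynomial, using additivity of the trace along short exact sequences), and once the coefficients vanish for $n\geq r$ the series must be constant, so $\chi(\Tr(T))=0$. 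You instead pass to the semisimplification $\D^{ss}=\D/\Nn$ and run an eigenvalue--Vandermonde computation in a product of matrix algebras. This does work, and the check you flag---that $\chi$ is preserved on the nose---is immediate, since the quotient functor is symmetric monoidal and is the identity on $\End_{\D}(\one)=K$. But be aware that the semisimplicity of $\D^{ss}$ is itself a nontrivial theorem (Andr\'e--Kahn, Etingof--Ostrik) whose key hypothesis is exactly that nilpotent endomorphisms in $\D$ have vanishing trace, i.e.\ the ``in particular'' clause of Corollary \ref{cor:endshape}; so your detour imports a heavier external result that already contains the paper's key lemma, whereas your Vandermonde step could equally be run directly in $\D$ once that corollary is available---which is in effect what the paper does. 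What each approach buys: yours reduces the trace identity to honest linear algebra in matrix algebras weighted by categorical dimensions; the paper's is self-contained and avoids semisimplification altogether.

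One step in your $(3)\Rightarrow(1)$ is glossed: semisimplicity of each $\End_{\C_{\chi}}(A)$ and a decomposition into summands with endomorphism ring $K$ does not by itself show those summands are simple (equivalently, that $\C_{\chi}$ is abelian semisimple); you must also exclude nonzero morphisms between non-isomorphic such summands. The paper supplies this by applying nondegeneracy of the trace pairing to $\End(B_1\oplus B_2)$: a nonzero $f:B_1\to B_2$ admits $g$ with $\Tr(fg)\neq 0$, hence $fg$ is invertible in $\End(B_2)=K$, and after rescaling a symmetric argument makes $f$ an isomorphism. Your argument already yields semisimplicity and nondegeneracy for the direct sum object, so the missing step is easily filled, but it should be stated.
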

If $\chi$ satisfies the equivalent conditions of the theorem we will say that $\chi$ is a \emph{good character}.
If $\chi$ is the character of invariants of a structure $(Y,(y_i))$ in $\Vec_K$, we will show in Section \ref{sec:vecstr} that there is a unique symmetric fiber functor $F:\C_{\chi}\to \Vec_K$. Using Tannaka reconstruction we will show that we get an equivalence $\C_{\chi}\cong\Rep_K(\Aut(Z,(z_i)))$, where $(Z,(z_i))$ is the unique structure with closed orbit in the closure of the orbit of $(Y,(y_i))$ in $U_{\dim(Y)}$. 
We summarize this in the following theorem:
\begin{theorem}\label{thm:main2}  
Let $(Y,(y_i))$ be an algebraic structure of dimension $d$ in $\Vec_k$. Assume that the $\GL_d$-orbit of this structure is closed in $U_d$. Then there is an equivalence $\C_{\chi}\cong \Rep(\Aut(Y,y_i))$, and for every $p,q\in \N$ the linear span of linear maps $Y^{\ot q}\to Y^{\ot p}$ which are constructible from the structure tensors is equal to  $\Hom_{\Aut(Y,(y_i))}(Y^{\ot q},Y^{\ot p})$. 
\end{theorem}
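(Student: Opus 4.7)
The plan is to realize the span of constructible maps as the image of a canonical evaluation map $ev\colon Con^{p,q}\to \Hom_K(Y^{\ot q}, Y^{\ot p})$ attached to $(Y,(y_i))$, and then to identify this image with $\Hom_{\Aut(Y,(y_i))}(Y^{\ot q}, Y^{\ot p})$ by routing $ev$ through the category $\C_\chi$ for $\chi = \chi_{(Y,(y_i))}$. One inclusion is essentially formal: every $g \in \Aut(Y,(y_i))$ fixes each $y_i$ and commutes with tensor products, compositions and the symmetric braiding, so the image of $ev$ automatically lies inside $\Hom_{\Aut(Y,(y_i))}(Y^{\ot q}, Y^{\ot p})$.

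For the reverse inclusion, I first note that because the $\GL_d(K)$-orbit of $(Y,(y_i))$ is closed, the stabilizer $\Aut(Y,(y_i))$ is reductive by Matsushima's theorem. Consequently $\Rep_K(\Aut(Y,(y_i)))$ is a semisimple $K$-good category in which $(Y,(y_i))$ lives as an algebraic structure of the prescribed type that affords $\chi$, so Theorem \ref{thm:main1} forces $\chi$ to be good. Per the discussion following that theorem (developed in Section \ref{sec:vecstr}), the unique symmetric fiber functor $F\colon \C_\chi \to \Vec_K$ sending the universal structure to $(Y,(y_i))$ factors, via Tannaka reconstruction, as an equivalence $\C_\chi \xrightarrow{\sim} \Rep_K(\Aut(Z,(z_i)))$, where $(Z,(z_i))$ is the unique closed-orbit structure in the orbit closure of $(Y,(y_i))$. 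Since $(Y,(y_i))$ already has closed orbit, $(Z,(z_i)) = (Y,(y_i))$ and $F$ is an equivalence $\C_\chi \simeq \Rep_K(\Aut(Y,(y_i)))$.

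With $F$ in hand I consider the factorization
\[
ev\colon Con^{p,q}\twoheadrightarrow \Hom_{\C_\chi}(W^{\ot q}, W^{\ot p}) \xrightarrow{F} \Hom_{\Aut(Y,(y_i))}(Y^{\ot q}, Y^{\ot p}),
\]
where $W \in \C_{univ}$ denotes the universal generating object. The first arrow is surjective by the construction of $\C_\chi$: its hom-spaces between tensor powers of $W$ are precisely the quotients of $Con^{p,q}$ by the $\chi$-negligible subspace, and Karoubi completion does not enlarge hom-spaces between objects that were already present. The second arrow is an isomorphism because $F$ is fully faithful onto its essential image. Since the composition coincides with $ev$ by construction of the fiber functor, $ev$ surjects onto $\Hom_{\Aut(Y,(y_i))}(Y^{\ot q}, Y^{\ot p})$, which is the required inclusion.

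The main obstacle is bootstrapping the two earlier inputs: first, that a closed-orbit structure yields a good character (for which we invoke Matsushima and the $K$-goodness of $\Rep_K(\Aut(Y,(y_i)))$); and second, that the fiber functor exists, is symmetric monoidal, and sends $W$ to $Y$ in a way compatible with the universal tensors, so that $\chi$-negligible morphisms genuinely vanish after applying $F$. Both are announced in the paragraph preceding the theorem as content of Section \ref{sec:vecstr}; once they are in place the present theorem reduces to tracking the universal diagrammatic hom-spaces through the chain $\C_{univ}\to\C_\chi\xrightarrow{F}\Vec_K$.
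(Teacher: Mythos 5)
Your overall route is the paper's route: once one has the equivalence $\C_{\chi}\simeq \Rep(\Aut(Y,(y_i)))$ induced by a symmetric fiber functor sending the tautological object $W$ to $(Y,(y_i))$, the theorem follows by chasing $Con^{p,q}\twoheadrightarrow \Hom_{\C_{\chi}}(W^{\ot q},W^{\ot p})\xrightarrow{\ \sim\ }\Hom_{\Aut(Y,(y_i))}(Y^{\ot q},Y^{\ot p})$, which is exactly how the paper deduces the statement (the unlabeled corollary after Corollary \ref{cor:closedorbit}). Your easy inclusion, the surjectivity of $Con^{p,q}$ onto the $\C_{\chi}$-hom space, and the remark that Karoubi completion does not change hom-spaces between existing objects are all fine. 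The Matsushima detour is correct but unnecessary: $\Vec_K$ is itself a $K$-good category, so Theorem \ref{thm:main1} already makes $\chi$ good without any reductivity input.

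The genuine gap is that the two inputs you defer to ``Section \ref{sec:vecstr}'' are not background machinery but the actual content of the paper's proof, namely Proposition \ref{prop:mainvecstr}. Concretely, you do not prove (a) that a symmetric fiber functor $F:\C_{\chi}\to\Vec_K$ exists and is unique --- the paper gets this from Deligne's criterion by showing that the idempotents $\Alt^{W^{a,b}}_r$ become $\chi$-negligible, using that $F_Y$ kills them for $r>\dim_K Y^{a,b}$ together with Lemma \ref{lem:cofinite}; nor (b) that $F(W)$, with its structure tensors, is isomorphic to the closed-orbit structure and that $F_Y$ itself factors through $\C_{\chi}$, i.e.\ that every $\chi$-negligible morphism is killed by $F_Y$. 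Point (b) is precisely where closedness of the orbit is used (via the specialization Lemma \ref{lem:closure} applied to $F(W)$ and the uniqueness of the fiber functor), and it is exactly the step you wave through as ``the composition coincides with $ev$ by construction of the fiber functor'': a priori $F$ could send $W$ to a different structure affording the same character, and for a non-closed orbit negligible morphisms need \emph{not} vanish under $F_Y$ (see the nilpotent endomorphism in Subsection \ref{subsec:example-idempotent}), so this compatibility cannot be taken for granted. As a deduction from Proposition \ref{prop:mainvecstr} your argument is correct and essentially identical to the paper's; as a standalone proof of the theorem it omits the substantive part.
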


This theorem raises the question of what representation categories are equivalent to $\C_{\chi}$ for some $\chi$. If $(Y,(y_i))$ has a closed orbit, this orbit will be isomorphic to $\GL_d/\Aut(Y,(y_i))$. In particular, the quotient $\GL_d/\Aut(Y,(y_i))$ is affine, and by Matsushima's criterion (see \cite{Arzhantsev} and references therein) the group $\Aut(Y,(y_i))$ is reductive. Another way to see that the group $\Aut(Y,(y_i))$ is reductive in the above case is that it is equivalent to $\C_{\chi}$, which is semisimple. 
We will prove the following in Section \ref{sec:vecstr}:
\begin{theorem}\label{thm:mainallG}
Let $G$ be a reductive affine algebraic group. Then $\Rep(G)\cong \C_{\chi}$ for some character $\chi$.
\end{theorem}
We will show that even though algebraic structures in $\Vec_K$ might be scarce, good characters are abundant. In fact, as we will see in the theorem below, they form a $K$-algebra that can be calculated explicitly in many cases. The diversity of good characters will also enable us to interpolate the categories of representations of automorphism groups of algebraic structure to more general categories, which generalize the categories $\Rep(S_t)$ of Deligne.

It was shown in Section 5 of \cite{meirUIR} that the ring $\winf$ is a polynomial algebra on the set $P$ of closed connected diagrams constructed from the structure tensors. There is therefore a natural bijection between characters on $\winf$ and functions $P\to K$. We will consider this bijection as an identification in what follows. The set of characters $K^P$ is a $K$-algebra under pointwise addition and multiplication. 
It follows from Section 6 of \cite{meirUIR} that taking direct sums and tensor products corresponds to taking sums and products of characters in $K^P$ respectively. We will prove the following result in Section \ref{sec:goodalgebra}: 
\begin{theorem}\label{thm:main3}
The set of good characters in $K^P$ forms a $K$-subalgebra. 
\end{theorem}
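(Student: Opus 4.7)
My strategy is to combine the equivalent characterizations~(2) and~(3) of good characters from Theorem~\ref{thm:main1}: I would handle closure under sums and products via~(2), and closure under scalar multiplication via~(3). The multiplicative identity of $K^P$ is the constant character~$1$, which is afforded by the one-dimensional structure $(K,(1,\dots,1))$ in $\Vec_K$ (every structure tensor set to $1 \in K \cong K^{\otimes p_i} \otimes (K^*)^{\otimes q_i}$), so it is good by~(2); similarly the zero character is afforded by the zero structure.

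Given good characters $\chi_1, \chi_2$ afforded by structures $(A_j,(y^j_i))$ in $K$-good categories $\D_j$ for $j=1,2$, the first step is to pass to the Deligne tensor product $\D := \D_1 \boxtimes_K \D_2$. This should again be $K$-good: it is $K$-linear, abelian, rigid, symmetric monoidal, its hom-spaces are tensor products of the finite-dimensional ones, and $\End_{\D}(\one_\D) = \End(\one_{\D_1}) \otimes_K \End(\one_{\D_2}) = K \otimes_K K = K$. The symmetric monoidal $K$-linear embeddings $F_1 = (-) \boxtimes \one_{\D_2}$ and $F_2 = \one_{\D_1} \boxtimes (-)$ transport $(A_j,(y^j_i))$ into structures $(B_j,(z^j_i))$ in $\D$ that still afford $\chi_j$, because the evaluation of any closed connected diagram factors through $F_j$ and returns the same scalar in $K$. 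Then inside $\D$ both combinations $(B_1 \oplus B_2, (z^1_i \oplus z^2_i))$ and $(B_1 \otimes B_2, (z^1_i \otimes z^2_i))$ are algebraic structures of the same type, and by the direct-sum/sum and tensor-product/product correspondences recalled from Section~6 of~\cite{meirUIR} they afford $\chi_1 + \chi_2$ and $\chi_1 \chi_2$ respectively; condition~(2) of Theorem~\ref{thm:main1} then makes both of these good.

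For closure under scalar multiplication by $c \in K$, I would verify the two clauses of condition~(3) of Theorem~\ref{thm:main1} directly from those for $\chi$. When $c \ne 0$ the pairing $\pair^{p,q}_{c\chi}$ equals $c \cdot \pair^{p,q}_\chi$, so it shares the radical with $\pair^{p,q}_\chi$ and inherits its finite codimension; a morphism $T^r$ is $c\chi$-negligible iff it is $\chi$-negligible, and $(c\chi)(\Tr(T)) = c \cdot \chi(\Tr(T)) = 0$. When $c = 0$ the zero character trivially satisfies both clauses, since $\rad^{p,q}_0 = Con^{p,q}$ has codimension $0$ and the trace condition is vacuous.

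The main obstacle I anticipate is verifying carefully that the monoidal embeddings $F_j$ preserve the character of invariants, and that the Deligne tensor product of two $K$-good categories is itself $K$-good in the precise sense used in the paper. The first reduces to functoriality of the diagrammatic evaluation map under a symmetric monoidal $K$-linear functor; the second is routine but technical (rigidity, finite-dimensionality of hom-spaces, and the identity $\End(\one) = K$ all behave well under $\boxtimes_K$, granted local finiteness, which follows from finite-dimensional hom-spaces). Once these are in place, the remaining combinatorial bookkeeping --- matching the character of a direct sum or tensor product of structures with the pointwise sum or product in $K^P$ --- is essentially formal.
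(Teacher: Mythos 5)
Your treatment of sums and products is essentially the paper's own argument: pass to the Deligne product $\C_{\chi_1}\boxtimes\C_{\chi_2}$ of the two (semisimple) $K$-good categories, transport the two structures there, form their direct sum and tensor product, and invoke the correspondences from Section 6 of \cite{meirUIR} together with condition (2) of Theorem \ref{thm:main1}. That part is sound.

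The scalar-multiplication step, however, rests on a false identity. The $K$-algebra structure on $K^P$ rescales only the values on the generators: $(c\chi)(Di)=c\,\chi(Di)$ for closed \emph{connected} diagrams $Di$, and $c\chi$ is then the algebra homomorphism $\winf=K[P]\to K$ determined by these values. It is \emph{not} $c$ times the linear functional $\chi$: on a closed diagram with $k$ connected components, $c\chi$ multiplies the $\chi$-value by $c^{k}$. Hence $\pair^{p,q}_{c\chi}\neq c\cdot\pair^{p,q}_{\chi}$ in general --- for instance $\pair^{1,1}(\Id_W\ot\Id_W)=D$ is connected while $\pair^{2,2}(\Id_{W^{\ot 2}}\ot\Id_{W^{\ot 2}})=D^{2}$ is not, so $(c\chi)$ gives $c\,\chi(D)$ on the first but $c^{2}\chi(D)^{2}$ on the second --- and the radicals genuinely differ: already for the empty type, $c\chi$ moves $\Rep(\GL_t)$ to $\Rep(\GL_{ct})$, whose negligible morphisms (e.g.\ which antisymmetrizers vanish) are different. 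Likewise $(c\chi)(\Tr(T))\neq c\,\chi(\Tr(T))$ for a general endomorphism $T$ in $\C_{univ}$, and for $c=0$ the character $0\cdot\chi$ is the augmentation (it still sends the empty diagram to $1$), not the zero functional, so neither clause of condition (3) is ``vacuous''. This is exactly the nontrivial content of Theorem \ref{thm:main3}, and the paper handles it differently: write $t\chi=c_t\cdot\chi$ where $c_t$ is the character with constant value $t$ on all closed connected diagrams, and prove $c_t$ is good by realizing it as the character of invariants of a structure of type $((p_i,q_i))$ built from the tensors $m,\Delta,u,\epsilon$ inside Deligne's $\Rep(S_t)$ (whose own goodness for all $t$ uses the interpolation result, Proposition \ref{prop:familymain}); closure under multiplication then gives goodness of $t\chi$. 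Without some substitute for this step, your direct verification of condition (3) for $c\chi$ does not go through.
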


In Section \ref{sec:interpolations} we will show that if $(\chi_t)$ is a one-parameter family of characters such that $\chi_t$ is good for countably many values of $t$, then under some mild conditions it holds that $\chi_t$ is good for every value of $t$.  

In Section \ref{sec:examples} we will give examples.
We will show how we can recover the constructions of Deligne for $\Rep(S_t)$, $\Rep(\GL_t)$, $\Rep(\text{O}_t)$ and $\Rep(\text{Sp}_t)$ from \cite{Deligne3}. Knop has generalized the construction of Deligne in \cite{Knop} by using degree functions. The values of his degree functions correspond to some values of the character $\chi$ we have here. Knop constructed categories such as $\Rep(S_t\wr G)$ where $G$ is a finite group, and $\Rep(\GL_t(\Ow/(\pi^r)))$ where $\Ow$ is a discrete valuation ring with a uniformizer $\pi$ and a finite residue field. We will give examples that generalize his constructions. If $\chi$ is a good character, we will show how to construct a family of categories $\C_{t\cdot \ol{\chi}}$ such that if $\C_{\chi}\cong \Rep(G)$ then $\C_{n\cdot\ol{\chi}}\cong \Rep(S_n\wr G)$ for $n\in\N$. Here $G$ can be a reductive group, and not necessarily a finite one. We will generalize the categories $\Rep(\GL_t(\Ow/(\pi^r)))$ in the following way: every finite $\Ow/(\pi^r)$-module has the form $M_{a_1,\ldots, a_r}:= (\Ow/(\pi))^{a_1}\oplus (\Ow/(\pi^2))^{a_2}\oplus\cdots\oplus (\Ow/(\pi^r))^{a_r}$. We will construct a family of good characters $\chi(t_1,\ldots, t_r)$, which depend on $r$ non-zero parametrs $t_i$, such that $\C_{\chi(q^{a_1},\ldots, q^{a_r})}\cong \Rep(\Aut_{\Ow}(M_{a_1,\ldots, a_r}))$, where $q=|\Ow/(\pi)|$. By considering the family of categories $\C_{\chi(t_1,\ldots, t_r)}$ we get an interpolation of the categories $\Rep(\Aut_{\Ow}(M_{a_1,\ldots, a_r}))$. 

The construction of Deligne was also generalized recently by Khovanov, Ostrik, and Kononov in \cite{KKO}. They constructed a family of $K$-good categories which are generated by a commutative Frobenius algebra, and used these to construct new examples of 2 dimensional TQFTs. Their construction depends on an infinite sequence $(\alpha_0,\alpha_1,\ldots)$ of scalars. In Section \ref{sec:examples} we will show how we can recover this construction, and that the numbers $\alpha_i$ arise as some character values. Khovanov, Ostrik, and Kononov also gave a criterion that characterizes the sequences $(\alpha_i)$ which arise from $K$-good categories. We will show how their criterion relates to the criterion for good characters we have in Theorem \ref{thm:main1}. 
The examples we have are summarized in the following table:

\begin{center}
\newcolumntype{C}[1]{>{\centering\arraybackslash}m{#1}}

\begin{longtable}{|C{4cm}|C{3.5cm}|C{3.5cm}|C{3.5cm}|}
\hline
Type of algebraic structure  & Resulting category & good characters  & See also \\
\hline 
Empty structure & $\Rep(\GL_t)$ & $K$ & \cite{Deligne3} \\ \hline 
Single endomorphism & $\prod_i \Rep(\GL_{t_i})$ & Monoid algebra of $(K,\times)$ & \\  \hline 
Non-degenerate symmetric pairing & $\Rep(\text{O}_t)$ & $K$ & \cite{Deligne3} \\ \hline 
Non-degenerate skew-symmetric pairing & $\Rep(\text{Sp}_t)$ & $K$ & \cite{Deligne3} \\ \hline 
Separable commutative algebra & $\Rep(S_t)$ & $K$ & \cite{Deligne3} \\ \hline 
Commutative Frobenius algebra & $\dcob_{\alpha}$ & Described in Subsection \ref{subsec:dcob} & \cite{KKO} \\ \hline 
Wreath products with $S_t$, for general structures & $\Rep(S_t\wr G)$ & $K$ & Described in some cases in \cite{Knop}. \\ \hline 
Group algebra with operators of a finite module $M_{a_1,\ldots, a_r}$ over a DVR $\Ow$ & $\Rep(\Aut(M_{a_1,\ldots, a_r}))$ and their interpolations, & $(K^{\times})^r$& The case $a_1=a_2=\cdots =a_{r-1}=0$ appears in \cite{Knop}. \\
\hline
\end{longtable}
\end{center}
The construction of the category $\C_{\chi}$ is strongly related to the universal construction of TQFTs from \cite{BHMV}. We replace the cobordism by the constructible morphisms and invariants of closed manifold are replaced by values of the character $\chi$. I believe that the methods presented in this paper can be further used to the study and construction of other TQFTs as well. 

\section{Preliminaries and notations}
\subsection{Algebraic structures in symmetric monoidal categories}\label{subsec:algstr}
Throughout this paper $K$ will be an algebraically closed field of characteristic zero, and all categories will be $K$-linear rigid symmetric monoidal categories. Recall that a symmetric monoidal category $\D$ is called rigid if for every $X\in\D$ there is an object $X^*$ together with maps $ev_X:X^*\ot X\to \one$ and $coev_X:\one\to X\ot X^*$, such that $ev_X$ and $coev_X$ induce an adjunction isomorphism 
$\Hom_{\D}(X^*\ot Y ,Z)\cong \Hom_{\D}(Y,X\ot Z)$ for every $Y,Z\in \D$. The object $\one$ here is the tensor unit of $\D$. Since we will only consider symmetric monoidal categories in this paper, we will not distinguish between left and right duals. In particular, we also get natural isomorphisms $\Hom_{\D}(X\ot Y ,Z)\cong \Hom_{\D}(Y,X^*\ot Z)$ induced by $ev_X$ and $coev_X$, together with the symmetry of the category. 

If $\D$ is a symmetric monoidal category and $X\in \D$ is some object, then we get an action of the symmetric group $S_n$ on $X^{\ot n}$. To be more precise, we have a group homomorphism $\phi:S_n\to \Aut_{\D}(X^{\ot n})$ given by permuting the tensor factors. We will write $\phi(\sigma) = L^{(n)}_{\sigma}(X)$, or just $L^{(n)}_{\sigma}$ if the object $X$ is clear. 

If $f:X\to X$ is an endomorphism in $\D$, we define the \emph{trace of $f$}, $\Tr(f)$, to be $$\Tr(f):\one\stackrel{coev_X}{\to}X\ot X^*\stackrel{f\ot 1}{\to } X\ot X^*\stackrel{c_{X,X^*}}{\to} X^*\ot X\stackrel{ev_X}{\to}\one.$$ Thus, $\Tr(f)\in \End_{\D}(\one)$. Of particular importance is the trace of the identity morphism, which we shall denote by $\dim(X)=\Tr(\Id_X)$. If $\End_{\D}(\one)=K$, the trace is just a scalar.
If $F:\D\to \D'$ is a symmetric monoidal functor, then the definition of trace immediately implies that $\Tr(F(f)) = F(\Tr(f))$, where we consider here the ring homomorphism $\End_{\D}(\one)\to \End_{\D'}(\one)$ induced by $F$.  

Given a symmetric monoidal category $\D$ and a tuple $((p_i,q_i))\in (\N^2)^r$, an algebraic structure of type $((p_i,q_i))$ in $\D$ is a pair $(A,(y_i))$ where $A$ is an object of $\D$ and for every $i=1,\ldots, r$ the \emph{structure tensor} $y_i$ is an element of $\Hom_{\D}(A^{\ot q_i},A^{\ot p_i})\cong \Hom_{\D}(\one, A^{p_i,q_i})$, where $A^{p,q}:= A^{\ot p}\ot (A^*)^{\ot q}$. For general algebraic structure we do not require the structure tensors $y_i$ to satisfy any particular set of axioms, but in most practical cases they do. We fix the type $((p_i,q_i))$. For $p,q\geq 1$ we will write $ev:A^{p,q}\to A^{p-1,q-1}$ for the map which applies evaluation on the last tensor copy of $A$ with the last tensor copy of $A^*$. 

If $(A,(y_i))$ is a structure of type $((p_i,q_i))$ in $\D$, we can use evaluation, tensor products and composition with $L^{(n)}_{\sigma}$ to form morphisms in $\Hom_{\D}(A^{\ot q},A^{\ot p})$ for different values of $p$ and $q$. All such morphisms can be described pictorially using strings diagrams (see Section 4 of \cite{meirUIR}).
In Section 5 of \cite{meirUIR} we constructed a graded associative algebra $Con = \bigoplus_{p,q\in\N} Con^{p,q}$ (here $\N = \{0,1,2,\ldots\}$). Note that this algebra depends on the type $((p_i,q_i))$ of the structure. The vector space $Con^{p,q}$ is freely spanned by equivalence classes of diagrams representing morphisms from $A^{\ot q}$ to $A^{\ot p}$. For example, if $(p_1,q_1) = (1,2)$ and $(p_2,q_2)=(2,1)$, then the following diagram in $Con^{2,2}$ represents the morphism $ev(L_{(123)}^{(3)}(y_1\ot y_2))$:
\begin{center}\scalebox{0.7}{
\begin{tikzpicture}
	\begin{pgfonlayer}{nodelayer}
		\node [style=2function] (0) at (-4, 3) {$x_1$};
		\node [style=2function] (1) at (-2, 3) {$x_2$};
		\node [style=none] (2) at (-3.75, 2.75) {};
		\node [style=none] (3) at (-4.25, 2.75) {};
		\node [style=none] (4) at (-4, 3.25) {};
		\node [style=none] (5) at (-2, 2.75) {};
		\node [style=none] (6) at (-1.75, 3.25) {};
		\node [style=none] (7) at (-2.25, 3.25) {};
		\node [style=none] (8) at (-4, 4.25) {};
		\node [style=none] (9) at (-1.75, 4.25) {};
		\node [style=none] (10) at (-2.25, 4.25) {};
		\node [style=none] (11) at (-3.75, 1.75) {};
		\node [style=none] (12) at (-4.25, 1.75) {};
		\node [style=none] (13) at (-2, 1.75) {};
		\node [style=none] (14) at (-3, 4.25) {};
		\node [style=none] (15) at (-3, 1.75) {};
	\end{pgfonlayer}
	\begin{pgfonlayer}{edgelayer}
		\draw (5.center) to (13.center);
		\draw (9.center) to (6.center);
		\draw (10.center) to (7.center);
		\draw (8.center) to (4.center);
		\draw (2.center) to (11.center);
		\draw (3.center) to (12.center);
		\draw [bend left=270, looseness=1.50] (14.center) to (8.center);
		\draw (14.center) to (15.center);
		\draw [bend right=90, looseness=1.50] (15.center) to (13.center);
	\end{pgfonlayer}
\end{tikzpicture}}
\end{center}
The permutation $(123)$ is needed here to make sure that the output string of $x_1$ enters the is connected with the input string of $x_2$. 
The multiplication on $Con$ is given by taking tensor products of maps. The \emph{constructible morphisms} in $Con$ make sense in any rigid symmetric monoidal category. Thus, for every $p,q\in \N$ we have a realization map 
$\Re_A^{p,q}:Con^{p,q}\to \Hom_{\D}(A^{\ot q},A^{\ot p})$, given by sending every basis element of $Con^{p,q}$ to the morphism it represents (see also Definition 7.2. in \cite{meirUIR}). 
In particular, we get an algebra homomorphism $Con^{0,0} \to \Hom_{\D}(\one,\one)$. We write $\winf = Con^{0,0}$. This algebra, which is in fact a commutative Hopf algebra, was the main object of study of the paper \cite{meirUIR}. This algebra was named $K[X]_{aug}$ in \cite{meirUIR}. We rename it here, for the sake of simplicity. 
If $\D'$ is another $K$-linear rigid symmetric monoidal category, and $F:\D\to \D'$ is a symmetric monoidal functor, then $(F(A),(F(y_i))$ is an algebraic structure of type $((p_i,q_i))$ in $\D'$. 
For every $p,q\in\N$ we have the following commutative diagram:
$$\xymatrix{Con^{p,q}\ar[rd]_{\Re_{F(A)}^{p,q}}\ar[r]^{\Re_A^{p,q}}&\Hom_{\D}(A^{\ot q},A^{\ot p})\ar[d]^{F} \\ 
& \Hom_{\D'}(F(A)^{\ot q},F(A)^{\ot p})}.$$
If the structure $(A,(y_i))$ is clear from the context we will also write $\Re^{p,q}$ for $\Re^{p,q}_A$. Since elements in $Con^{p,q}$ represent morphisms $A^{\ot q}\to A^{\ot p}$, we have a pairing $\pair^{p,q}:Con^{p,q}\ot Con^{q,p}\to \winf$ given by $\pair^{p,q}(T_1\ot T_2) = \Tr(T_1\circ T_2)$. The following diagram is then commutative (see also Diagram 7.1. in \cite{meirUIR}):
$$\xymatrix{Con^{p,q}\ot Con^{q,p}\ar[rr]^(0.35){\Re^{p,q}\ot \Re^{q,p}}\ar[d]^{\pair^{p,q}} & & \Hom_{\D}(A^{\ot q},A^{\ot p})\ot\Hom_{\D}(A^{\ot p},A^{\ot q})\ar[d]\\ \winf\ar[rr]^{\Re^{0,0}} & & \End_{\D}(\one),}$$ where the map $\Hom_{\D}(A^{\ot q},A^{\ot p})\ot\Hom_{\D}(A^{\ot p},A^{\ot q})\to \End_{\D}(\one)$ is given by $f\ot g\mapsto \Tr(f\circ g)$. 

If $\Hom_{\D}(\one,\one)=K$ then the structure $(A,(y_i))$ gives a character $\winf\to K$. We will refer to this character as the \emph{character of invariants} of $(A,(y_i))$, and we will often write it as $\chi_{(A,(y_i))}$. We will also say that $(A,(y_i))$ \emph{affords} the character $\chi_{(A,(y_i))}$. 

\begin{definition} A $K$-good category is a $K$-linear symmetric monoidal category $\D$ that satisfies the following conditions:
\begin{enumerate}
\item The hom-spaces in $\D$ are finite dimensional.
\item It holds that $\End_{\D}(\one)\cong K$. 
\item The category $\D$ is abelian and rigid.
\end{enumerate}
\end{definition}

The following lemma is Proposition 4.7.5. in \cite{EGNO} and Lemme 3.5 in \cite{Deligne3}. We will use it in this paper to calculate traces. 
\begin{lemma}
Let $\D$ be a $K$-good category. 
Assume that we have a commutative diagram 
$$\xymatrix{0\ar[r] & A\ar[r]^i\ar[d]^f & B\ar[r]^p\ar[d]^g & C\ar[r]\ar[d]^h & 0 \\ 
0 \ar[r] & A\ar[r]^i& B\ar[r]^p & C\ar[r] & 0}
$$
where the two rows are the same short exact sequence. Then 
$\Tr(g) = \Tr(f)+\Tr(h)$.
\end{lemma}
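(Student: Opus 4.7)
The plan is to break the proof into the case where the short exact sequence splits and then reduce the general case to it. For the split case, suppose $B \cong A \oplus C$ with $i$ and $p$ realized as the canonical inclusion and projection; write $i_A = i$ and $p_C = p$ for these, and $p_A : B \to A$, $i_C : C \to B$ for the remaining structural maps of the biproduct. The commutativity $g \circ i = i \circ f$ says that $g$ preserves the summand $A$, while $p \circ g = h \circ p$ says that $g$ induces $h$ on the quotient $C$. Together these force the upper-triangular form $g = i_A f p_A + i_A \alpha p_C + i_C h p_C$ for some $\alpha : C \to A$. Since $coev_{A \oplus C}$ lands in the ``diagonal'' summands $A \ot A^* \oplus C \ot C^*$ of $(A \oplus C) \ot (A \oplus C)^*$ and restricts there to $coev_A + coev_C$ (and dually for $ev$), trace is additive across biproducts. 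Combined with the cyclicity identity $\Tr(XY) = \Tr(YX)$, this yields $\Tr(i_A f p_A) = \Tr(f p_A i_A) = \Tr(f)$, $\Tr(i_C h p_C) = \Tr(h p_C i_C) = \Tr(h)$, and $\Tr(i_A \alpha p_C) = \Tr(\alpha p_C i_A) = \Tr(\alpha \circ 0) = 0$, hence $\Tr(g) = \Tr(f) + \Tr(h)$.

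For a general short exact sequence, which need not split, I would exploit the rigidity of $\D$ to argue at the level of $coev$ and $ev$. Rigidity guarantees that the dualised sequence $0 \to C^* \to B^* \to A^* \to 0$ is also short exact, and by definition $\Tr(g)$ is the composite $\one \lra{coev_B} B \ot B^* \lra{g \ot 1} B \ot B^* \lra{c_{B,B^*}} B^* \ot B \lra{ev_B} \one$. The strategy is to express $coev_B$ and $ev_B$ via the two short exact sequences and then perform a diagram chase. The crucial vanishing input is that for any $\alpha : C \to A$ the endomorphism $i \circ \alpha \circ p$ of $B$ has zero trace, since $\Tr(i \alpha p) = \Tr(\alpha p i) = \Tr(\alpha \circ 0) = 0$ by cyclicity and the fact that $p \circ i = 0$. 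This kills any extension-class-dependent discrepancy between $\Tr(g)$ and $\Tr(f) + \Tr(h)$.

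The main obstacle is precisely the non-split case: in a general $K$-linear abelian rigid category one cannot write $B$ as a biproduct of $A$ and $C$, so the matrix computation of the split case is not directly available. A cleaner alternative would be a deformation argument: parametrise the extension class in $\mathrm{Ext}^1(C,A)$ by a scalar $\lambda \in K$ so that $\lambda = 1$ recovers the given extension and $\lambda = 0$ gives the split one, observe that the resulting family of traces depends polynomially on $\lambda$ (all ingredients being morphisms in a $K$-linear category), and use the computation in the split case plus the vanishing $\Tr(i \alpha p) = 0$ to conclude that this polynomial is constantly $\Tr(f) + \Tr(h)$. Either route finishes the proof; the finite-dimensionality hypothesis of a $K$-good category is used only through the identification $\End_{\D}(\one) = K$, which makes the trace values genuine scalars.
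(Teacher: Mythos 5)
Your split-case computation is fine, but the lemma is only interesting (and only needs proving) in the non-split case, and that is exactly where your argument has a genuine gap. The first route you offer (``express $coev_B$ and $ev_B$ via the two short exact sequences and perform a diagram chase'') is not an argument: the whole difficulty is that $B\ot B^*$ is \emph{not} a direct sum of $A\ot A^*$, $A\ot C^*$, $C\ot A^*$, $C\ot C^*$ when the sequence does not split, but only carries a filtration with these subquotients, and one must actually show that the trace composite $\one\to B\ot B^*\to B^*\ot B\to\one$ decomposes along that filtration; the observation $\Tr(i\alpha p)=0$ does not by itself ``kill the extension-class-dependent discrepancy''. The second route, the deformation argument, does not make sense in an abstract $K$-good category: the objects $B_\lambda$ realizing the classes $\lambda\cdot[B]\in\mathrm{Ext}^1(C,A)$ are different objects, and there is no algebraic or topological structure on the collection of their endomorphisms that would give meaning to ``$\Tr(g_\lambda)$ depends polynomially on $\lambda$''. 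Worse, for $\lambda\neq 0$ one can realize $B_\lambda$ as $B$ itself with $i$ replaced by $\lambda^{-1}i$, and then $g$ itself is a compatible endomorphism, so the ``family'' of traces is constant on $\lambda\neq 0$ for trivial reasons and carries no information about the split fibre at $\lambda=0$; nothing lets you pass from $\lambda\neq 0$ to $\lambda=0$. You would also need to construct a compatible family $g_\lambda$ in the first place, which you do not do.

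For comparison: the paper does not prove this lemma at all, but quotes it from Proposition 4.7.5 of \cite{EGNO} and Lemme 3.5 of \cite{Deligne3}. The proofs given there handle the non-split case head-on, essentially by using that tensoring with the rigid object $B^*$ is exact, so the sequence $0\to A\to B\to C\to 0$ and its dual produce a filtration of $B\ot B^*$ with subquotients $A\ot A^*$, $A\ot C^*$, $C\ot A^*$, $C\ot C^*$, and then checking that the trace composite is compatible with this filtration, the two ``diagonal'' subquotients contributing $\Tr(f)$ and $\Tr(h)$ and the cross terms contributing $0$ (this is where the idea behind your $\Tr(i\alpha p)=0$ computation is correctly implemented). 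If you want a self-contained proof, that filtration argument is the missing ingredient you should supply; your split-case matrix computation alone does not reduce the general case to it. Incidentally, your closing remark is also slightly off: what this lemma really uses is abelianness, $K$-linearity and rigidity (exactness of duality), not merely $\End_{\D}(\one)=K$.
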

The above lemma has the following corollary:
\begin{corollary}[See also Corollaire 3.6 in \cite{Deligne3}]\label{cor:endshape}
Let $\D$ be a $K$-good category, and let $T\in\End_{\D}(B)$ for some object $B\in \D$. Then $\sum_{i\geq 0}\Tr(T^i)X^i\in K[[X]]$ is a rational function of the form $\frac{P(X)}{Q(X)}$ where $\deg(P)\leq \deg(Q)$, $Q$ has no multiple roots, and $Q(0)\neq 0$. In particular, if $T$ is nilpotent, then $\Tr(T)=0$. 
\end{corollary}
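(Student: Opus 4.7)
The plan is to reduce to the nilpotent case and then to the single-eigenvalue case via a Fitting-type decomposition of $B$, using that $\End_\D(B)$ is finite-dimensional. Since $\End_\D(B)$ is a finite-dimensional $K$-algebra, the commutative subalgebra $K[T]$ is itself finite-dimensional, so the minimal polynomial of $T$ exists and factors as $\prod_{i=1}^r (X-\lambda_i)^{m_i}$ with distinct $\lambda_i\in K$ (using that $K$ is algebraically closed). The Chinese Remainder Theorem then gives pairwise orthogonal idempotents $e_i\in K[T]\subseteq\End_\D(B)$ summing to $\Id_B$, and because idempotents split in any abelian category, we obtain a decomposition $B\cong\bigoplus_i B_i$ on which $T$ restricts to $\lambda_i\,\Id_{B_i}+N_i$ with $N_i$ nilpotent (since $N_i^{m_i}=0$).

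Next, I would establish that any nilpotent endomorphism $N\in\End_\D(C)$ has $\Tr(N)=0$. For this, take the filtration $0=C_0\subseteq C_1\subseteq\cdots\subseteq C_n=C$ by the kernels $C_k:=\ker(N^k)$, which stabilises since $N$ is nilpotent. By construction $N$ sends $C_k$ into $C_{k-1}$, so the induced endomorphism on each successive quotient $C_k/C_{k-1}$ is zero. Iterating the preceding lemma applied to the short exact sequences $0\to C_{k-1}\to C_k\to C_k/C_{k-1}\to 0$ gives $\Tr(N)=0$. Since each power $N_i^j$ with $j\geq 1$ is again nilpotent, this yields $\Tr(N_i^j)=0$ for all $j\geq 1$, which establishes the ``in particular'' clause of the corollary.

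Finally, the same lemma applied to the split short exact sequences coming from $B=\bigoplus_i B_i$ gives trace-additivity across the decomposition, and binomially expanding $(\lambda_i\Id_{B_i}+N_i)^k$ together with the previous step yields
$$\Tr(T^k)\;=\;\sum_{i=1}^r \lambda_i^k\,\dim(B_i),\qquad k\geq 0,$$
where $\dim(B_i)=\Tr(\Id_{B_i})\in K$. Summing the resulting geometric series then gives
$$\sum_{k\geq 0}\Tr(T^k)\,X^k\;=\;\sum_{i=1}^r \frac{\dim(B_i)}{1-\lambda_i X}\;=\;\frac{P(X)}{\prod_{i=1}^r(1-\lambda_i X)},$$
whose denominator $Q(X)$ has distinct linear factors (since the $\lambda_i$ are distinct), satisfies $Q(0)=1\neq 0$, and has degree $r$, while the numerator $P(X)$ is a sum of products of $r-1$ linear factors and hence has degree at most $r-1$, so $\deg P\leq\deg Q$. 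The main obstacle I anticipate is the first step: checking carefully that the ring-theoretic CRT splitting of $K[T]$ is realised as a genuine categorical direct sum decomposition of $B$ in $\D$, which relies both on the splitting of idempotents in an abelian category and on trace-additivity across the resulting direct sum via the preceding lemma.
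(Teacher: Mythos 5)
Your proof is correct, and it takes a genuinely different route from the paper's. The paper argues by induction on the degree of the minimal polynomial $f$ of $T$: for $\deg f=1$ one has $T=\la\Id_B$ and the series is $\dim(B)/(1-\la X)$; otherwise it picks an eigenvalue $\la$, applies the trace-additivity lemma to the short exact sequence built from $T-\la\Id_B$, obtaining $\Tr(T^i)=\dim(\Ker(T-\la\Id_B))\,\la^i+\Tr(T_C^i)$ with $T_C$ induced on the quotient and of strictly smaller minimal polynomial, and concludes by induction; the nilpotent clause is then deduced at the very end from the shape of the rational function (a quotient $P/Q$ with $\deg P\leq\deg Q$ which is a polynomial must be constant). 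You instead perform the whole generalized-eigenspace decomposition at once, using that $K[T]\subseteq\End_{\D}(B)$ is finite dimensional, CRT idempotents, and the splitting of idempotents in an abelian category, reduce each block to $\la_i\Id+N_i$ with $N_i$ nilpotent, and prove vanishing of traces of nilpotents head-on via the kernel filtration $\Ker(N^k)$ together with the same trace-additivity lemma. Both proofs rest on exactly the same two ingredients (finite-dimensional hom-spaces and the lemma preceding the corollary), so neither is more general; what your organization buys is that the ``in particular'' statement is proved directly rather than as a by-product, and that you sidestep the paper's bookkeeping about the endomorphism induced on the quotient of $T-\la\Id_B$, at the small cost of invoking the CRT splitting and checking that it is realized as a direct sum decomposition in $\D$ (which you do). One cosmetic point: if some $\la_i=0$ then the factor $1-\la_iX$ is the constant $1$, so $\deg Q$ can be less than $r$; the inequality $\deg P\leq\deg Q$ still holds because each summand of $P$ omits exactly one factor of $Q$, so the conclusion is unaffected.
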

\begin{proof}
Notice first that the set of rational functions that satisfy the condition stated in the corollary is the same as the linear span of functions of the form $\frac{1}{1-\la X}$ for some $\la\in K$. In particular, it is a linear subspace of $K[[X]]$. 

Since the hom-spaces in $\D$ are finite dimensional, the set $\{\Id_B,T,T^2,\ldots\}$ is linearly dependent, and $T$ solves some non-zero polynomial. Let $f(t)$ be the minimal polynomial of $T$. We shall proceed by induction on $\deg(f)$. If $\deg(f)=1$ then $T=\la\Id_B$ for some $\la\in K$. This implies that $\sum_i \Tr(T^i)X^i = \dim(B)\sum_i\la^iX^i = \frac{\dim(B)}{1-\la X}$, and we get a rational function of the desired form. 
If $\deg(f)>1$, then since $K$ is algebraically closed the polynomial $f$ splits into linear terms. In particular, $T-\la\Id_B$ is not invertible for some $\la\in K$. Write $A =  \Ker(T-\la\Id_B)$ and $C=\Im(T-\la\Id_B)$. We thus have a short exact sequence $0\to A\to B\to C\to 0$. Moreover, $T$ induces a morphism from this short exact sequence to itself. The lemma now implies that $\Tr(T^i) = \Tr(T^i|_A) + \Tr(T^i_C) = \dim(A)\la^i + \Tr(T^i_C)$ where $T_C:C\to C$ is the endomorphism induced by $T$. Since the minimal polynomial of $T_C$ is $f(t)/(t-\la)$ we get that 
$$\sum_i\Tr(T^i)X^i = \frac{\dim(A)}{1-\la X}+ \sum_i \Tr(T_C^i)X^i,$$ and by induction we are done.
In particular, if $T$ is nilpotent then $T^r=0$ for big enough $r$. This implies that the resulting rational function is a polynomial, and this can only happen if $\Tr(T^i)=0$ for every $i>0$.
\end{proof}
\begin{definition}
A good rational function $Z(X)\in K[[X]]$ is a rational function that can be written as a quotient $\frac{P(X)}{Q(X)}$ such that $Q$ has no multiple roots, $\deg(P)\leq \deg(Q)$, and $Q(0)\neq 0$.
\end{definition}
The categories we will construct in this paper will use the notion of tensor ideals, which we recall here.
\begin{definition}\label{def:tensorideal}
Let $\D$ be a $K$-linear rigid symmetric monoidal category. A tensor ideal $\Nn\lhd \D$ is a collection of subspaces $\Nn(A,B)\subseteq \Hom_{\D}(A,B)$ for every $A,B\in\D$ that satisfies the following conditions:
\begin{itemize}
\item If $A,B,C,D\in \D$, $f\in \Nn(A,B)$, $g\in \Hom_{\D}(B,C)$, and $h\in \Hom_{\D}(D,A)$, then  $gfh\in \Nn(D,C)$. 
\item If $f\in \Nn(A,B)$ and $C\in\D$ then $f\ot 1_C\in \Nn(A\ot C,B\ot C)$.
\end{itemize}
\end{definition}
\begin{remark}
By using the symmetry isomorphisms and the first part of the definition one can show that a tensor ideal is also closed under taking tensor product from the left with the identity morphism. 
\end{remark}
\begin{definition}
If $\D$ is a rigid symmetric monoidal category and $\Nn$ is a tensor ideal in $\D$, we define the quotient category $\D/\Nn$ to be the category with objects $\text{Ob}(\D/\Nn) = \text{Ob}(\D)$ and hom-spaces $\Hom_{\D/\Nn}(A,B) = \Hom_{\D}(A,B)/\Nn(A,B)$. This category is again a rigid symmetric monoidal category, and there is a canonical functor $\D\to \D/\Nn$. 
\end{definition}

\subsection{Geometric invariant theory}\label{subsec:GIT}
Fix a type $((p_i,q_i))$ of algebraic structures and a dimension $d\in\N$. In \cite{meirUIR} it was shown that there is an affine variety $U_d$ equipped with a $\GL_d(K)$-action such that isomorphism classes of structures of type $((p_i,q_i))$ of dimension $d$ in $\Vec_K$ are in one-to-one correspondence with $\GL_d(K)$-orbits in $U_d$.
We summarize in the next proposition some results about this action:
\begin{proposition}
\begin{enumerate}
\item If $W_1$ and $W_2$ are two closed $\GL_d(K)$-stable subsets of $U_d$ then there is $f\in K[U_d]^{\GL_d(K)}$ such that $f(W_1)=0$ and $f(W_2)=1$.  
\item The ring of invariants $K[U_d]^{\GL_d(K)}$ is finitely generated and therefore its maximal spectrum $Z=Spec_m(K[U_d]^{\GL_d(K)})$ is an affine variety. 
\item The map $\pi:U_d\to Z$ induced by the inclusion $K[U_d]^{\GL_d(K)}\hookrightarrow K[U_d]$ is surjective. 
\item The points in $Z$ are in one to one correspondence with the closed $\GL_d(K)$-orbits in $U_d$. 
\item For every $z\in Z$, the preimage $\pi^{-1}(z)$ is a union of $\GL_d(K)$-orbits, and it contains exactly one closed orbit. This closed orbit lies in the closure of all other orbits in $\pi^{-1}(z)$. 
\end{enumerate}
\end{proposition}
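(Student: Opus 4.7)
The plan is to reduce everything to two standard inputs about the reductive group $\GL_d(K)$ acting on an affine variety: the existence of a Reynolds operator $R:K[U_d]\to K[U_d]^{\GL_d(K)}$ (a $K[U_d]^{\GL_d(K)}$-linear projection), and Nagata's theorem that the ring of invariants under a reductive group acting on a finitely generated algebra is itself finitely generated. Statement (2) is exactly Nagata's theorem. Once (1) is established, statements (3)--(5) are essentially formal consequences, so I would prove (1) first and treat it as the main engine.

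For (1), I would take the ideals $I_j\subseteq K[U_d]$ of functions vanishing on $W_j$. Since $W_1\cap W_2=\emptyset$ and $U_d$ is affine, $I_1+I_2=K[U_d]$, so we can write $1=f_1+f_2$ with $f_j\in I_j$. Applying the Reynolds operator yields $1=R(f_1)+R(f_2)$, and because each $W_j$ is $\GL_d(K)$-stable the ideals $I_j$ are $\GL_d(K)$-stable, hence $R(I_j)\subseteq I_j\cap K[U_d]^{\GL_d(K)}$. Then $f:=R(f_1)$ is invariant, vanishes on $W_1$, and equals $1$ on $W_2$. The technical crux here is the existence of the Reynolds operator, which uses that in characteristic zero $\GL_d(K)$ is linearly reductive; I expect this to be the key obstacle if one wishes to give a self-contained argument, but it is standard.

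For (3) I would show that the comorphism $\pi^{\#}:K[U_d]^{\GL_d(K)}\hookrightarrow K[U_d]$ has the property that for every maximal ideal $\mathfrak m\subset K[U_d]^{\GL_d(K)}$ the extended ideal $\mathfrak m\cdot K[U_d]$ is proper: if $1\in \mathfrak m\cdot K[U_d]$, then writing $1=\sum a_i f_i$ with $a_i\in \mathfrak m$, $f_i\in K[U_d]$ and applying $R$ gives $1=\sum a_i R(f_i)\in \mathfrak m$, a contradiction. Thus $\mathfrak m\cdot K[U_d]$ is contained in some maximal ideal of $K[U_d]$, which corresponds to a point in $U_d$ lying over $\mathfrak m$.

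For (4) and (5) I would fix $z\in Z$ and consider $\pi^{-1}(z)$, which is closed and $\GL_d(K)$-stable. By Noetherianity, among the non-empty closed $\GL_d(K)$-stable subsets of $\pi^{-1}(z)$ there is a minimal one; a minimal non-empty closed $\GL_d(K)$-stable set is precisely a closed orbit (since the closure of any orbit inside it would be a closed stable subset). Uniqueness of the closed orbit in $\pi^{-1}(z)$ is then immediate from (1): two distinct closed $\GL_d(K)$-stable subsets $\ol{G\cdot x}$ and $\ol{G\cdot y}$ in $\pi^{-1}(z)$ would be separated by an invariant $f$, but invariants are constant on fibres of $\pi$, contradiction. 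The same argument applied to the closed orbit $\ol{G\cdot x}\subseteq \pi^{-1}(z)$ versus any other orbit $G\cdot y\subseteq \pi^{-1}(z)$ with $\ol{G\cdot x}\not\subseteq \ol{G\cdot y}$ produces two disjoint closed stable sets inside the fibre, again contradicting (1); hence the unique closed orbit lies in the closure of every other orbit in the fibre. Finally, (4) is the combination of surjectivity in (3) with this uniqueness: the map sending a closed orbit to its image in $Z$ is well-defined, surjective, and injective.
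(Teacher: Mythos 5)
Your proof is correct, and it is essentially the classical GIT argument; the difference from the paper is that the paper does not prove parts (1)--(4) at all but cites them (they are Lemma 3.3, Theorem 3.4 and Theorem 3.5 of Newstead's book, via Section 2 of \cite{meirUIR}), and only argues part (5) directly, by the same separation argument you use: two closed orbits in one fibre cannot be distinguished by invariants, contradicting (1). So what you add is a self-contained derivation of the standard inputs -- the Reynolds operator coming from linear reductivity of $\GL_d(K)$ in characteristic zero, Nagata/Hilbert finiteness for (2), the $R$-linearity trick showing $\mathfrak m\cdot K[U_d]$ is proper for (3) -- and, usefully, an explicit proof of the closure statement in (5) (that the unique closed orbit lies in $\ol{G\cdot y}$ for every orbit $G\cdot y$ in the fibre), which the paper's one-line proof leaves to the cited results. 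Two small points worth flagging: the proposition as printed omits the hypothesis $W_1\cap W_2=\varnothing$ in (1), which you correctly (and necessarily) supply, since the statement is false without it; and in your existence argument for the closed orbit, the parenthetical only shows that a minimal non-empty closed $\GL_d(K)$-stable subset $S$ equals $\ol{G\cdot x}$ for any $x\in S$ -- to conclude $G\cdot x$ is itself closed you also need that orbits of algebraic groups are open in their closures, so that $\ol{G\cdot x}\setminus G\cdot x$ is a proper closed stable subset of $S$, which minimality then forces to be empty. This is standard, but it should be said.
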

\begin{proof}
The first four parts of the proposition were proven in \cite{meirUIR}, Section 2. They follow from Lemma 3.3, Theorem 3.4, and Theorem 3.5 in \cite{Newstead}. The last part follows from the fact that if there are two closed orbits $\Ow_1$ and $\Ow_2$ in $\pi^{-1}(z)$ then we cannot distinguish them using invariant polynomials, contradicting the first part of the proposition.
\end{proof} 
\subsection{Finite dimensional algebras}\label{subsec:findimalg}
Let $R$ be a finite dimensional $K$-algebra. For $r\in R$ we define $L_r:R\to R$ to be the linear map $x\mapsto r\cdot x$. We write $\Tr_{reg}(r) = \Tr(L_r)$, and we define the trace pairing $R\ot R\to K$ by $r_1\ot r_2\mapsto \Tr_{reg}(r_1r_2)$. We claim the following:
\begin{lemma} The trace pairing is non-degenerate if and only if $R$ is semisimple. 
\end{lemma}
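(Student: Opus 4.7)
The plan is to prove the stronger statement that the radical of the trace pairing coincides with the Jacobson radical $J = \rad(R)$; the equivalence in the lemma is immediate from this, since $R$ is semisimple if and only if $J = 0$.

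First I would show the inclusion of $J$ into the radical of the pairing, using nilpotence. If $r \in J$, then $rs \in J$ for every $s \in R$. Because $R$ is finite dimensional, $J$ is a nilpotent ideal, so $(rs)^n = 0$ for some $n$. Since $L_{(rs)^n} = (L_{rs})^n$, the operator $L_{rs}$ on the finite dimensional vector space $R$ is nilpotent, and therefore $\Tr_{reg}(rs) = \Tr(L_{rs}) = 0$. Hence $r$ is in the radical of the trace pairing. This inclusion alone shows that if the trace pairing is non-degenerate then $J = 0$, i.e.\ $R$ is semisimple.

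For the converse, assume $R$ is semisimple. By the Wedderburn--Artin theorem, using that $K$ is algebraically closed, there is a $K$-algebra isomorphism $R \cong \prod_{i=1}^k M_{n_i}(K)$. The trace pairing is block-diagonal with respect to this decomposition: writing $r = (r_i)_i$ and $s = (s_i)_i$, the operator $L_{rs}$ preserves each factor $M_{n_i}(K)$, so $\Tr_{reg}(rs) = \sum_i \Tr(L_{r_i s_i})$ where the $i$th trace is computed on $M_{n_i}(K)$. On a single factor $M_n(K)$, the left regular representation decomposes as $n$ copies of the standard representation on $K^n$ (think of $M_n(K)$ as $n$ column vectors), so $\Tr(L_A) = n \Tr(A)$. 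Thus on each factor the pairing is $(A,B)\mapsto n\, \Tr(AB)$, which is non-degenerate since $n \neq 0$ in characteristic zero and the matrix trace pairing $(A,B)\mapsto \Tr(AB)$ is already non-degenerate on $M_n(K)$. Non-degeneracy on each factor yields non-degeneracy on $R$.

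There is no real obstacle here; this is a classical computation. The only point requiring a moment of care is the factor of $n$ arising in $\Tr(L_A) = n\Tr(A)$, whose non-vanishing is guaranteed by the standing assumption that $\mathrm{char}(K) = 0$.
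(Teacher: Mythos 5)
Your proof is correct and follows essentially the same route as the paper: the converse direction is identical (elements of $J$ give nilpotent left multiplications, hence vanish against everything under $\Tr_{reg}$, forcing $J=0$ when the pairing is non-degenerate), and the forward direction is the same Wedderburn reduction to matrix blocks, where your computation $\Tr(L_A)=n\Tr(A)$ is just another way of exhibiting the paper's dual basis $\{e_{ij}\}$, $\{\tfrac{1}{n}e_{ji}\}$. Only your opening claim to prove that the radical of the pairing equals $J$ is slightly overstated --- you only establish the inclusion $J\subseteq\rad$ in general --- but this does not affect the lemma, which your two directions prove completely.
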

\begin{proof}
Assume first that $R$ is semisimple. By Wedderburn's Theorem, and since $K$ is algebraically closed, $R$ splits as the direct sum of matrix algebras, $R = \bigoplus_t \M_{n_t}(K)$. The trace pairing is non-degenerate on every matrix algebra $\M_n(K)$, where a dual basis of $\{e_{ij}\}$ is given by $\{\frac{1}{n}e_{ji}\}$. It follows that the trace pairing is non-degenerate also on $R$. 

In the other direction, assume that the trace pairing is non-degenerate. We will show that the Jacobson radical $J$ of $R$ is zero, which will imply that $R$ is semisimple. Let $r\in J$. Then $rx\in J$ is nilpotent for every $x\in R$, and as a result $L_{rx}:R\to R$ is nilpotent. This implies that $\Tr_{reg}(rx)=0$ for every $x\in R$. Since the trace pairing is non-degenerate, we get that $r=0$, and therefore $J=0$. 
\end{proof}

\section{Construction of the universal category}\label{sec:unicat}
Fix a type $((p_i,q_i))$. We begin by constructing the universal category $\C_{univ}$ and explain its universal property.
We will start with an auxiliary category $\C_0$. The objects in this category are symbols $W^{a,b}$ for $a,b\in\N$ which we shall think of as $W^{\ot a}\ot (W^*)^{\ot b}$.
The duality adjunction implies that we should have 
$$\Hom_{\C_0}(W^{\ot a}\ot (W^*)^{\ot b},W^{\ot c}\ot(W^*)^{\ot d})\cong \Hom_{\C_0}(W^{\ot d}\ot W^{\ot a},W^{\ot c}\ot W^{\ot b})\cong$$ $$ \Hom_{\C_0}(W^{\ot d+a},W^{\ot c+b}).$$
We define 
$$\Hom_{\C_0}(W^{a,b},W^{c,d}) = Con^{c+b,d+a}$$
If $f:W^{a,b}\to W^{c,d}$ is represented by a diagram $Di_2$ and $g:W^{c,d}\to W^{e,h}$ is represented by a diagram $Di_1$, then the composition $gf:W^{a,b}\to W^{e,h}$ is given pictorially by the following diagram:
\begin{center}\scalebox{0.7}{
\begin{tikzpicture}
	\begin{pgfonlayer}{nodelayer}
		\node [style=none] (2) at (-5.75, 1.75) {};
		\node [style=none] (3) at (-5.5, 1.75) {};
		\node [style=none] (4) at (-4.25, 1.75) {};
		\node [style=none] (5) at (-4, 1.75) {};
		\node [style=none] (8) at (-5.75, 2.75) {};
		\node [style=none] (9) at (-5.5, 2.75) {};
		\node [style=none] (10) at (-4.25, 2.75) {};
		\node [style=none] (11) at (-4, 2.75) {};
		\node [style=none] (12) at (-5.75, 3) {$\overbrace{\phantom{aaaaa}}$};
		\node [style=none] (13) at (-5.75, 3.5) {$e$};
		\node [style=none] (14) at (-3.75, 3.75) {$\overbrace{\phantom{aaaaa}}$};
		\node [style=none] (15) at (-3.75, 4.25) {$d$};
		\node [style=none] (16) at (-2, 0) {};
		\node [style=none] (17) at (-2.25, 0) {};
		\node [style=none] (19) at (-2, 2.75) {};
		\node [style=none] (20) at (-2.25, 2.75) {};
		\node [style=none] (21) at (1, 1.75) {};
		\node [style=none] (22) at (0.75, 1.75) {};
		\node [style=none] (23) at (2.75, 1.75) {};
		\node [style=none] (24) at (3, 1.75) {};
		\node [style=none] (27) at (1, 2.75) {};
		\node [style=none] (28) at (0.75, 2.75) {};
		\node [style=none] (29) at (2.75, 2.75) {};
		\node [style=none] (30) at (3, 2.75) {};
		\node [style=none] (31) at (0.75, 3.75) {$\overbrace{\phantom{aaaaa}}$};
		\node [style=none] (32) at (0.75, 4.25) {$c$};
		\node [style=none] (33) at (3, 3) {$\overbrace{\phantom{aaaaa}}$};
		\node [style=none] (34) at (3, 3.5) {$b$};
		\node [style=none] (35) at (-5.75, 0) {};
		\node [style=none] (36) at (-5.5, 0) {};
		\node [style=none] (37) at (-4, 0) {};
		\node [style=none] (38) at (-4.25, 0) {};
		\node [style=none] (41) at (-5.75, 1) {};
		\node [style=none] (42) at (-5.5, 1) {};
		\node [style=none] (43) at (-4, 1) {};
		\node [style=none] (44) at (-4.25, 1) {};
		\node [style=none] (45) at (-5.75, -0.25) {$\underbrace{\phantom{aaaaa}}$};
		\node [style=none] (46) at (-5.75, -0.75) {$h$};
		\node [style=none] (47) at (-3.75, -1) {$\underbrace{\phantom{aaaaa}}$};
		\node [style=none] (48) at (-3.75, -1.5) {$c$};
		\node [style=none] (49) at (1, 0) {};
		\node [style=none] (50) at (0.75, 0) {};
		\node [style=none] (51) at (2.75, 0) {};
		\node [style=none] (52) at (3, 0) {};
		\node [style=none] (55) at (1, 1) {};
		\node [style=none] (56) at (0.75, 1) {};
		\node [style=none] (57) at (2.75, 1) {};
		\node [style=none] (58) at (3, 1) {};
		\node [style=none] (59) at (0.75, -1) {$\underbrace{\phantom{aaaaa}}$};
		\node [style=none] (60) at (0.75, -1.5) {$d$};
		\node [style=none] (61) at (3, -0.25) {$\underbrace{\phantom{aaaaa}}$};
		\node [style=none] (62) at (3, -0.75) {$a$};
		\node [style=none] (63) at (-1.25, 0) {};
		\node [style=none] (64) at (-1, 0) {};
		\node [style=none] (66) at (-1.25, 2.75) {};
		\node [style=none] (67) at (-1, 2.75) {};
		\node [style=multi function small] (68) at (-4.75, 1.25) {$Di_1$};
		\node [style=multi function small] (69) at (1.5, 1.25) {$Di_2$};
		\node [style=none] (70) at (-6, 1.75) {};
		\node [style=none] (71) at (-6, 2.75) {};
		\node [style=none] (72) at (-6, 0) {};
		\node [style=none] (73) at (-6, 1) {};
		\node [style=none] (74) at (3.25, 1.75) {};
		\node [style=none] (75) at (3.25, 2.75) {};
		\node [style=none] (76) at (3.25, 0) {};
		\node [style=none] (77) at (3.25, 1) {};
		\node [style=none] (78) at (1.25, 1.75) {};
		\node [style=none] (79) at (1.25, 2.75) {};
		\node [style=none] (80) at (1.25, 0) {};
		\node [style=none] (81) at (1.25, 1) {};
		\node [style=none] (83) at (-1.5, 2.75) {};
		\node [style=none] (84) at (-1.5, 0) {};
		\node [style=none] (87) at (-2.5, 2.75) {};
		\node [style=none] (88) at (-2.5, 0) {};
		\node [style=none] (90) at (-3.75, 1.75) {};
		\node [style=none] (91) at (-3.75, 2.75) {};
		\node [style=none] (92) at (-3.75, 0) {};
		\node [style=none] (93) at (-3.75, 1) {};
	\end{pgfonlayer}
	\begin{pgfonlayer}{edgelayer}
		\draw (8.center) to (2.center);
		\draw (9.center) to (3.center);
		\draw (10.center) to (4.center);
		\draw (11.center) to (5.center);
		\draw (19.center) to (16.center);
		\draw (20.center) to (17.center);
		\draw [bend left=90, looseness=1.25] (10.center) to (19.center);
		\draw [bend left=90, looseness=1.25] (11.center) to (20.center);
		\draw (27.center) to (21.center);
		\draw (28.center) to (22.center);
		\draw (29.center) to (23.center);
		\draw (30.center) to (24.center);
		\draw (41.center) to (35.center);
		\draw (42.center) to (36.center);
		\draw (43.center) to (37.center);
		\draw (44.center) to (38.center);
		\draw (55.center) to (49.center);
		\draw (56.center) to (50.center);
		\draw (57.center) to (51.center);
		\draw (58.center) to (52.center);
		\draw [bend right=90, looseness=0.75] (17.center) to (49.center);
		\draw [bend right=90, looseness=0.75] (16.center) to (50.center);
		\draw (66.center) to (63.center);
		\draw (67.center) to (64.center);
		\draw [bend right=90, looseness=0.75] (38.center) to (64.center);
		\draw [bend right=90, looseness=0.75] (37.center) to (63.center);
		\draw [bend left=90] (67.center) to (28.center);
		\draw [bend left=90] (66.center) to (27.center);
		\draw (71.center) to (70.center);
		\draw (73.center) to (72.center);
		\draw (75.center) to (74.center);
		\draw (77.center) to (76.center);
		\draw (79.center) to (78.center);
		\draw (81.center) to (80.center);
		\draw (91.center) to (90.center);
		\draw (93.center) to (92.center);
		\draw [bend left=90, looseness=1.25] (91.center) to (87.center);
		\draw [bend right=90, looseness=0.75] (92.center) to (84.center);
		\draw [bend left=270] (79.center) to (83.center);
		\draw [bend right=270, looseness=0.75] (80.center) to (88.center);
		\draw (83.center) to (84.center);
		\draw (87.center) to (88.center);
	\end{pgfonlayer}
\end{tikzpicture}}
\end{center}
The identity morphism in $\Hom_{\C_0}(W^{a,b},W^{a,b})$ is given by 
\begin{center}\scalebox{0.7}{
\begin{tikzpicture}
	\begin{pgfonlayer}{nodelayer}
		\node [style=1function] (0) at (-3.75, 2.5) {$\Id_{W^{\ot a}}$};
		\node [style=1function] (6) at (-1.25, 2.5) {$\Id_{W^{\ot b}}$};
		\node [style=none] (8) at (-3.75, 3.5) {};
		\node [style=none] (11) at (-1.25, 3.5) {};
		\node [style=none] (12) at (-3.75, 4.25) {};
		\node [style=none] (14) at (-1.25, 4.25) {};
		\node [style=none] (16) at (-3.75, 2) {};
		\node [style=none] (19) at (-1.25, 2) {};
		\node [style=none] (20) at (-3.75, 1.5) {};
		\node [style=none] (23) at (-1.25, 1.5) {};
		\node [style=none] (24) at (-3.75, 3) {};
		\node [style=none] (27) at (-1.25, 3) {};
		\node [style=none] (28) at (-3.75, 1.25) {$\underbrace{\phantom{aaaaa}}$};
		\node [style=none] (29) at (-3.75, 0.75) {$b$};
		\node [style=none] (30) at (-1.25, 1.25) {$\underbrace{\phantom{aaaaa}}$};
		\node [style=none] (31) at (-1.25, 0.75) {$a$};
		\node [style=none] (32) at (-3.75, 4.5) {};
		\node [style=none] (33) at (-1.25, 4.5) {};
		\node [style=none] (34) at (-3.75, 4.75) {$\overbrace{\phantom{aaaaa}}$};
		\node [style=none] (35) at (-1.25, 5.25) {$b$};
		\node [style=none] (36) at (-1.25, 4.75) {$\overbrace{\phantom{aaaaa}}$};
		\node [style=none] (37) at (-3.75, 5.25) {$a$};
	\end{pgfonlayer}
	\begin{pgfonlayer}{edgelayer}
		\draw (16.center) to (20.center);
		\draw (19.center) to (23.center);
		\draw [in=90, out=-90, looseness=0.50] (14.center) to (8.center);
		\draw [in=90, out=-90, looseness=0.50] (12.center) to (11.center);
		\draw (8.center) to (24.center);
		\draw (11.center) to (27.center);
		\draw (32.center) to (12.center);
		\draw (33.center) to (14.center);
	\end{pgfonlayer}
\end{tikzpicture}}
\end{center}
We used here single strings to represent bundles of $a$ strings and of $b$ strings. 
If $m_3:W^{a,b}\to W^{c,d}, m_2:W^{c,d}\to W^{e,f}$, and $m_1:W^{e,f}\to W^{s,t}$, are morphisms represented by the diagrams $Di_1,Di_2$ and $Di_3$ respectively, then the associativity axiom $(m_1m_2)m_3 = m_1(m_2m_3)$ follows from showing that both compositions are represented by the diagram:
\begin{center}\scalebox{0.7}{
\begin{tikzpicture}
	\begin{pgfonlayer}{nodelayer}
		\node [style=none] (0) at (0.75, 1.75) {};
		\node [style=none] (1) at (1, 1.75) {};
		\node [style=none] (2) at (2.25, 1.75) {};
		\node [style=none] (3) at (2.5, 1.75) {};
		\node [style=none] (4) at (0.75, 2.75) {};
		\node [style=none] (5) at (1, 2.75) {};
		\node [style=none] (6) at (2.25, 2.75) {};
		\node [style=none] (7) at (2.5, 2.75) {};
		\node [style=none] (8) at (0.75, 3.5) {$\overbrace{\phantom{aaaaa}}$};
		\node [style=none] (9) at (0.75, 4) {$e$};
		\node [style=none] (10) at (2.75, 3.75) {$\overbrace{\phantom{aaaaa}}$};
		\node [style=none] (11) at (2.75, 4.25) {$d$};
		\node [style=none] (12) at (4.5, 0) {};
		\node [style=none] (13) at (4.25, 0) {};
		\node [style=none] (14) at (4.5, 2.75) {};
		\node [style=none] (15) at (4.25, 2.75) {};
		\node [style=none] (16) at (7.5, 1.75) {};
		\node [style=none] (17) at (7.25, 1.75) {};
		\node [style=none] (18) at (9.25, 1.75) {};
		\node [style=none] (19) at (9.5, 1.75) {};
		\node [style=none] (20) at (7.5, 2.75) {};
		\node [style=none] (21) at (7.25, 2.75) {};
		\node [style=none] (22) at (9.25, 2.75) {};
		\node [style=none] (23) at (9.5, 2.75) {};
		\node [style=none] (24) at (7.25, 3.75) {$\overbrace{\phantom{aaaaa}}$};
		\node [style=none] (25) at (7.25, 4.25) {$c$};
		\node [style=none] (26) at (9.5, 3) {$\overbrace{\phantom{aaaaa}}$};
		\node [style=none] (27) at (9.5, 3.5) {$b$};
		\node [style=none] (28) at (0.75, 0) {};
		\node [style=none] (29) at (1, 0) {};
		\node [style=none] (30) at (2.5, 0) {};
		\node [style=none] (31) at (2.25, 0) {};
		\node [style=none] (32) at (0.75, 1) {};
		\node [style=none] (33) at (1, 1) {};
		\node [style=none] (34) at (2.5, 1) {};
		\node [style=none] (35) at (2.25, 1) {};
		\node [style=none] (36) at (0.75, -0.75) {$\underbrace{\phantom{aaaaa}}$};
		\node [style=none] (37) at (0.75, -1.25) {$f$};
		\node [style=none] (38) at (2.75, -1) {$\underbrace{\phantom{aaaaa}}$};
		\node [style=none] (39) at (2.75, -1.5) {$c$};
		\node [style=none] (40) at (7.5, 0) {};
		\node [style=none] (41) at (7.25, 0) {};
		\node [style=none] (42) at (9.25, 0) {};
		\node [style=none] (43) at (9.5, 0) {};
		\node [style=none] (44) at (7.5, 1) {};
		\node [style=none] (45) at (7.25, 1) {};
		\node [style=none] (46) at (9.25, 1) {};
		\node [style=none] (47) at (9.5, 1) {};
		\node [style=none] (48) at (7.25, -1) {$\underbrace{\phantom{aaaaa}}$};
		\node [style=none] (49) at (7.25, -1.5) {$d$};
		\node [style=none] (50) at (9.5, -0.25) {$\underbrace{\phantom{aaaaa}}$};
		\node [style=none] (51) at (9.5, -0.75) {$a$};
		\node [style=none] (52) at (5.25, 0) {};
		\node [style=none] (53) at (5.5, 0) {};
		\node [style=none] (54) at (5.25, 2.75) {};
		\node [style=none] (55) at (5.5, 2.75) {};
		\node [style=multi function small] (56) at (1.75, 1.25) {$Di_1$};
		\node [style=multi function small] (57) at (8, 1.25) {$Di_2$};
		\node [style=none] (58) at (0.5, 1.75) {};
		\node [style=none] (59) at (0.5, 2.75) {};
		\node [style=none] (60) at (0.5, 0) {};
		\node [style=none] (61) at (0.5, 1) {};
		\node [style=none] (62) at (9.75, 1.75) {};
		\node [style=none] (63) at (9.75, 2.75) {};
		\node [style=none] (64) at (9.75, 0) {};
		\node [style=none] (65) at (9.75, 1) {};
		\node [style=none] (66) at (7.75, 1.75) {};
		\node [style=none] (67) at (7.75, 2.75) {};
		\node [style=none] (68) at (7.75, 0) {};
		\node [style=none] (69) at (7.75, 1) {};
		\node [style=none] (71) at (5, 2.75) {};
		\node [style=none] (72) at (5, 0) {};
		\node [style=none] (75) at (4, 2.75) {};
		\node [style=none] (76) at (4, 0) {};
		\node [style=none] (78) at (2.75, 1.75) {};
		\node [style=none] (79) at (2.75, 2.75) {};
		\node [style=none] (80) at (2.75, 0) {};
		\node [style=none] (81) at (2.75, 1) {};
		\node [style=none] (82) at (-5.5, 1.75) {};
		\node [style=none] (83) at (-5.25, 1.75) {};
		\node [style=none] (84) at (-4, 1.75) {};
		\node [style=none] (85) at (-3.75, 1.75) {};
		\node [style=none] (86) at (-5.5, 2.75) {};
		\node [style=none] (87) at (-5.25, 2.75) {};
		\node [style=none] (88) at (-4, 2.75) {};
		\node [style=none] (89) at (-3.75, 2.75) {};
		\node [style=none] (90) at (-5.5, 3) {$\overbrace{\phantom{aaaaa}}$};
		\node [style=none] (91) at (-5.5, 3.5) {$s$};
		\node [style=none] (92) at (-3.5, 3.75) {$\overbrace{\phantom{aaaaa}}$};
		\node [style=none] (93) at (-3.5, 4.25) {$f$};
		\node [style=none] (94) at (-1.75, 0) {};
		\node [style=none] (95) at (-2, 0) {};
		\node [style=none] (96) at (-1.75, 2.75) {};
		\node [style=none] (97) at (-2, 2.75) {};
		\node [style=none] (110) at (-5.5, 0) {};
		\node [style=none] (111) at (-5.25, 0) {};
		\node [style=none] (112) at (-3.75, 0) {};
		\node [style=none] (113) at (-4, 0) {};
		\node [style=none] (114) at (-5.5, 1) {};
		\node [style=none] (115) at (-5.25, 1) {};
		\node [style=none] (116) at (-3.75, 1) {};
		\node [style=none] (117) at (-4, 1) {};
		\node [style=none] (118) at (-5.5, -0.25) {$\underbrace{\phantom{aaaaa}}$};
		\node [style=none] (119) at (-5.5, -0.75) {$t$};
		\node [style=none] (120) at (-3.5, -1) {$\underbrace{\phantom{aaaaa}}$};
		\node [style=none] (121) at (-3.5, -1.5) {$e$};
		\node [style=multi function small] (138) at (-4.5, 1.25) {$Di_1$};
		\node [style=none] (140) at (-5.75, 1.75) {};
		\node [style=none] (141) at (-5.75, 2.75) {};
		\node [style=none] (142) at (-5.75, 0) {};
		\node [style=none] (143) at (-5.75, 1) {};
		\node [style=none] (157) at (-2.25, 2.75) {};
		\node [style=none] (158) at (-2.25, 0) {};
		\node [style=none] (160) at (-3.5, 1.75) {};
		\node [style=none] (161) at (-3.5, 2.75) {};
		\node [style=none] (162) at (-3.5, 0) {};
		\node [style=none] (163) at (-3.5, 1) {};
		\node [style=none] (164) at (-0.75, 0) {};
		\node [style=none] (165) at (-1, 0) {};
		\node [style=none] (166) at (-0.75, 2.75) {};
		\node [style=none] (167) at (-1, 2.75) {};
		\node [style=none] (169) at (-1.25, 2.75) {};
		\node [style=none] (170) at (-1.25, 0) {};
	\end{pgfonlayer}
	\begin{pgfonlayer}{edgelayer}
		\draw (4.center) to (0.center);
		\draw (5.center) to (1.center);
		\draw (6.center) to (2.center);
		\draw (7.center) to (3.center);
		\draw (14.center) to (12.center);
		\draw (15.center) to (13.center);
		\draw [bend left=90, looseness=1.25] (6.center) to (14.center);
		\draw [bend left=90, looseness=1.25] (7.center) to (15.center);
		\draw (20.center) to (16.center);
		\draw (21.center) to (17.center);
		\draw (22.center) to (18.center);
		\draw (23.center) to (19.center);
		\draw (32.center) to (28.center);
		\draw (33.center) to (29.center);
		\draw (34.center) to (30.center);
		\draw (35.center) to (31.center);
		\draw (44.center) to (40.center);
		\draw (45.center) to (41.center);
		\draw (46.center) to (42.center);
		\draw (47.center) to (43.center);
		\draw [bend right=90, looseness=0.75] (13.center) to (40.center);
		\draw [bend right=90, looseness=0.75] (12.center) to (41.center);
		\draw (54.center) to (52.center);
		\draw (55.center) to (53.center);
		\draw [bend right=90, looseness=0.75] (31.center) to (53.center);
		\draw [bend right=90, looseness=0.75] (30.center) to (52.center);
		\draw [bend left=90] (55.center) to (21.center);
		\draw [bend left=90] (54.center) to (20.center);
		\draw (59.center) to (58.center);
		\draw (61.center) to (60.center);
		\draw (63.center) to (62.center);
		\draw (65.center) to (64.center);
		\draw (67.center) to (66.center);
		\draw (69.center) to (68.center);
		\draw (79.center) to (78.center);
		\draw (81.center) to (80.center);
		\draw [bend left=90, looseness=1.25] (79.center) to (75.center);
		\draw [bend right=90, looseness=0.75] (80.center) to (72.center);
		\draw [bend left=270] (67.center) to (71.center);
		\draw [bend right=270, looseness=0.75] (68.center) to (76.center);
		\draw (86.center) to (82.center);
		\draw (87.center) to (83.center);
		\draw (88.center) to (84.center);
		\draw (89.center) to (85.center);
		\draw (96.center) to (94.center);
		\draw (97.center) to (95.center);
		\draw [bend left=90, looseness=1.25] (88.center) to (96.center);
		\draw [bend left=90, looseness=1.25] (89.center) to (97.center);
		\draw (114.center) to (110.center);
		\draw (115.center) to (111.center);
		\draw (116.center) to (112.center);
		\draw (117.center) to (113.center);
		\draw (141.center) to (140.center);
		\draw (143.center) to (142.center);
		\draw (161.center) to (160.center);
		\draw (163.center) to (162.center);
		\draw [bend left=90, looseness=1.25] (161.center) to (157.center);
		\draw (166.center) to (164.center);
		\draw (167.center) to (165.center);
		\draw [bend right=90, looseness=0.50] (94.center) to (60.center);
		\draw [bend right=90, looseness=0.50] (95.center) to (28.center);
		\draw [bend right=90, looseness=0.50] (158.center) to (29.center);
		\draw [bend right=60, looseness=0.50] (162.center) to (170.center);
		\draw [bend right=90, looseness=0.50] (112.center) to (165.center);
		\draw [bend right=90, looseness=0.50] (113.center) to (164.center);
		\draw [bend left=90] (166.center) to (59.center);
		\draw [bend left=90] (169.center) to (5.center);
		\draw [bend left=90] (167.center) to (4.center);
		\draw (157.center) to (158.center);
		\draw (169.center) to (170.center);
		\draw (75.center) to (76.center);
		\draw (72.center) to (71.center);
	\end{pgfonlayer}
\end{tikzpicture}}
\end{center}
The identity axiom follows from Definition 4.1. in \cite{meirUIR}.

The category $\C_0$ is also a symmetric monoidal category. 
The tensor product functor is given on objects by $$W^{a,b}\ot W^{c,d}= W^{a+c,b+d}.$$ If $m_1:W^{a_1,b_1}\to W^{a_2,b_2}$ and $m_2:W^{c_1,d_1}\to W^{c_2,d_2}$ are morphisms represented by diagrams $Di_1$ and $Di_2$, then the tensor product $m_1\ot m_2$ is represented by the diagram
\begin{center}\scalebox{0.7}{
\begin{tikzpicture}
	\begin{pgfonlayer}{nodelayer}
		\node [style=none] (2) at (-3.75, 1.75) {};
		\node [style=none] (3) at (-3.5, 1.75) {};
		\node [style=none] (4) at (-2.25, 1.75) {};
		\node [style=none] (5) at (-2, 1.75) {};
		\node [style=none] (8) at (-3.75, 4.25) {};
		\node [style=none] (9) at (-3.5, 4.25) {};
		\node [style=none] (10) at (-2.25, 2.5) {};
		\node [style=none] (11) at (-2, 2.5) {};
		\node [style=none] (12) at (-3.75, 4.75) {$\overbrace{\phantom{aaaaa}}$};
		\node [style=none] (13) at (-3.75, 5.25) {$a_2$};
		\node [style=none] (14) at (-2.25, 4.75) {$\overbrace{\phantom{aaaa}}$};
		\node [style=none] (15) at (0.75, 5.25) {$b_1$};
		\node [style=none] (21) at (0.75, 1.75) {};
		\node [style=none] (22) at (0.5, 1.75) {};
		\node [style=none] (23) at (2, 1.75) {};
		\node [style=none] (24) at (2.25, 1.75) {};
		\node [style=none] (27) at (0.75, 2.5) {};
		\node [style=none] (28) at (0.5, 2.5) {};
		\node [style=none] (29) at (2, 4.25) {};
		\node [style=none] (30) at (2.25, 4.25) {};
		\node [style=none] (31) at (0.75, 4.75) {$\overbrace{\phantom{aaaa}}$};
		\node [style=none] (32) at (-2, 5.25) {$c_2$};
		\node [style=none] (33) at (2.25, 4.75) {$\overbrace{\phantom{aaaaa}}$};
		\node [style=none] (34) at (2.25, 5.25) {$d_1$};
		\node [style=none] (35) at (-3.75, -1.75) {};
		\node [style=none] (36) at (-3.5, -1.75) {};
		\node [style=none] (37) at (-2, 0.25) {};
		\node [style=none] (38) at (-2.25, 0.25) {};
		\node [style=none] (41) at (-3.75, 1) {};
		\node [style=none] (42) at (-3.5, 1) {};
		\node [style=none] (43) at (-2, 1) {};
		\node [style=none] (44) at (-2.25, 1) {};
		\node [style=none] (45) at (-3.75, -2.25) {$\underbrace{\phantom{aaaaa}}$};
		\node [style=none] (46) at (-3.75, -2.75) {$b_2$};
		\node [style=none] (47) at (-2, -2.25) {$\underbrace{\phantom{aaaa}}$};
		\node [style=none] (48) at (0.5, -2.75) {$a_1$};
		\node [style=none] (49) at (0.75, 0.25) {};
		\node [style=none] (50) at (0.5, 0.25) {};
		\node [style=none] (51) at (2, -1.75) {};
		\node [style=none] (52) at (2.25, -1.75) {};
		\node [style=none] (55) at (0.75, 1) {};
		\node [style=none] (56) at (0.5, 1) {};
		\node [style=none] (57) at (2, 1) {};
		\node [style=none] (58) at (2.25, 1) {};
		\node [style=none] (59) at (0.5, -2.25) {$\underbrace{\phantom{aaaa}}$};
		\node [style=none] (60) at (-2, -2.75) {$d_2$};
		\node [style=none] (61) at (2.25, -2.25) {$\underbrace{\phantom{aaaaa}}$};
		\node [style=none] (62) at (2.25, -2.75) {$c_1$};
		\node [style=multi function small] (68) at (-3.25, 1.25) {$Di_1$};
		\node [style=multi function small] (69) at (1.5, 1.25) {$Di_2$};
		\node [style=none] (70) at (-2.25, -1.25) {};
		\node [style=none] (71) at (-2, -1.25) {};
		\node [style=none] (72) at (-2, -1.75) {};
		\node [style=none] (73) at (-2.25, -1.75) {};
		\node [style=none] (74) at (0.5, -1.25) {};
		\node [style=none] (75) at (0.75, -1.25) {};
		\node [style=none] (76) at (0.75, -1.75) {};
		\node [style=none] (77) at (0.5, -1.75) {};
		\node [style=none] (79) at (0.5, 3.75) {};
		\node [style=none] (80) at (0.75, 3.75) {};
		\node [style=none] (81) at (0.75, 4.25) {};
		\node [style=none] (82) at (0.5, 4.25) {};
		\node [style=none] (83) at (-2, 4.25) {};
		\node [style=none] (84) at (-2, 3.75) {};
		\node [style=none] (85) at (-2.25, 3.75) {};
		\node [style=none] (86) at (-2.25, 4.25) {};
		\node [style=none] (87) at (-4, 1.75) {};
		\node [style=none] (88) at (-4, 4.25) {};
		\node [style=none] (89) at (-4, -1.75) {};
		\node [style=none] (90) at (-4, 1) {};
		\node [style=none] (91) at (2.5, 1.75) {};
		\node [style=none] (92) at (2.5, 4.25) {};
		\node [style=none] (93) at (2.5, -1.75) {};
		\node [style=none] (94) at (2.5, 1) {};
	\end{pgfonlayer}
	\begin{pgfonlayer}{edgelayer}
		\draw (8.center) to (2.center);
		\draw (9.center) to (3.center);
		\draw (10.center) to (4.center);
		\draw (11.center) to (5.center);
		\draw (27.center) to (21.center);
		\draw (28.center) to (22.center);
		\draw (29.center) to (23.center);
		\draw (30.center) to (24.center);
		\draw (41.center) to (35.center);
		\draw (42.center) to (36.center);
		\draw (43.center) to (37.center);
		\draw (44.center) to (38.center);
		\draw (55.center) to (49.center);
		\draw (56.center) to (50.center);
		\draw (57.center) to (51.center);
		\draw (58.center) to (52.center);
		\draw [in=90, out=-90, looseness=0.50] (38.center) to (74.center);
		\draw [in=90, out=-90, looseness=0.50] (37.center) to (75.center);
		\draw (75.center) to (76.center);
		\draw (74.center) to (77.center);
		\draw [in=90, out=-90, looseness=0.50] (50.center) to (70.center);
		\draw [in=90, out=-90, looseness=0.50] (49.center) to (71.center);
		\draw (71.center) to (72.center);
		\draw (70.center) to (73.center);
		\draw [in=-90, out=90, looseness=0.50] (11.center) to (80.center);
		\draw [in=-90, out=90, looseness=0.50] (10.center) to (79.center);
		\draw [in=-90, out=90, looseness=0.50] (28.center) to (85.center);
		\draw [in=-90, out=90, looseness=0.50] (27.center) to (84.center);
		\draw (84.center) to (83.center);
		\draw (86.center) to (85.center);
		\draw (82.center) to (79.center);
		\draw (81.center) to (80.center);
		\draw (88.center) to (87.center);
		\draw (90.center) to (89.center);
		\draw (92.center) to (91.center);
		\draw (94.center) to (93.center);
	\end{pgfonlayer}
\end{tikzpicture}}
\end{center}
We have $$(W^{a,b}\ot W^{c,d})\ot W^{e,f} = W^{a+c+e,b+d+f} = W^{a,b}\ot (W^{c,d}\ot W^{e,f}).$$ The associativity isomorphism is given simply by the identity. The tensor unit is $\one:=W^{0,0}$. The unit isomorphism $W^{0,0}\ot W^{a,b} = W^{a,b}\to W^{a,b}$ and $W^{a,b}\ot W^{0,0}= W^{a,b}\to W^{a,b}$ are just the identity morphisms as well, and $\C_0$ is thus a strict monoidal category.
 
The symmetric structure on $\C_{0}$ is given by the collection of morphisms $W^{a,b}\ot W^{c,d}\to W^{c,d}\ot W^{a,b}$ or $W^{a+c,b+d}\to W^{c+a,d+b}$ given diagrammatically by 
\begin{center}\scalebox{0.7}{
\begin{tikzpicture}
	\begin{pgfonlayer}{nodelayer}
		\node [style=1function] (0) at (-6.25, 2.5) {$\Id_{W^{\ot d}}$};
		\node [style=1function] (1) at (-4.5, 2.5) {$\Id_{W^{\ot b}}$};
		\node [style=1function] (5) at (-2.75, 2.5) {$\Id_{W^{\ot a}}$};
		\node [style=1function] (6) at (-1, 2.5) {$\Id_{W^{\ot c}}$};
		\node [style=none] (8) at (-6.25, 3.5) {};
		\node [style=none] (9) at (-4.5, 3.5) {};
		\node [style=none] (10) at (-2.75, 3.5) {};
		\node [style=none] (11) at (-1, 3.5) {};
		\node [style=none] (12) at (-6.25, 5.75) {};
		\node [style=none] (13) at (-4.5, 5.75) {};
		\node [style=none] (14) at (-1, 5.75) {};
		\node [style=none] (15) at (-2.75, 5.75) {};
		\node [style=none] (16) at (-6.25, 2) {};
		\node [style=none] (17) at (-4.5, 2) {};
		\node [style=none] (18) at (-2.75, 2) {};
		\node [style=none] (19) at (-1, 2) {};
		\node [style=none] (20) at (-6.25, 1) {};
		\node [style=none] (21) at (-4.5, 1) {};
		\node [style=none] (22) at (-2.75, 1) {};
		\node [style=none] (23) at (-1, 1) {};
		\node [style=none] (24) at (-6.25, 3) {};
		\node [style=none] (25) at (-4.5, 3) {};
		\node [style=none] (26) at (-2.75, 3) {};
		\node [style=none] (27) at (-1, 3) {};
		\node [style=none] (28) at (-6.25, 0.75) {$\underbrace{\phantom{aaaa}}$};
		\node [style=none] (29) at (-6.25, 6.5) {$c$};
		\node [style=none] (30) at (-4.5, 0.75) {$\underbrace{\phantom{aaaa}}$};
		\node [style=none] (31) at (-4.5, 6.5) {$a$};
		\node [style=none] (32) at (-2.75, 0.75) {$\underbrace{\phantom{aaaa}}$};
		\node [style=none] (33) at (-2.75, 6.5) {$b$};
		\node [style=none] (34) at (-1, 0.75) {$\underbrace{\phantom{aaaa}}$};
		\node [style=none] (35) at (-1, 6.5) {$d$};
		\node [style=none] (36) at (-6.25, 6) {$\overbrace{\phantom{aaaa}}$};
		\node [style=none] (37) at (-6.25, 0.25) {$d$};
		\node [style=none] (38) at (-4.5, 6) {$\overbrace{\phantom{aaaa}}$};
		\node [style=none] (39) at (-4.5, 0.25) {$b$};
		\node [style=none] (40) at (-2.75, 6) {$\overbrace{\phantom{aaaa}}$};
		\node [style=none] (41) at (-2.75, 0.25) {$a$};
		\node [style=none] (42) at (-1, 6) {$\overbrace{\phantom{aaaa}}$};
		\node [style=none] (43) at (-1, 0.25) {$c$};
	\end{pgfonlayer}
	\begin{pgfonlayer}{edgelayer}
		\draw (16.center) to (20.center);
		\draw (17.center) to (21.center);
		\draw (18.center) to (22.center);
		\draw (19.center) to (23.center);
		\draw [in=90, out=-90, looseness=0.75] (15.center) to (9.center);
		\draw [in=90, out=-90, looseness=0.75] (14.center) to (8.center);
		\draw [in=90, out=-90] (13.center) to (10.center);
		\draw [in=90, out=-90, looseness=0.75] (12.center) to (11.center);
		\draw (8.center) to (24.center);
		\draw (9.center) to (25.center);
		\draw (10.center) to (26.center);
		\draw (11.center) to (27.center);
	\end{pgfonlayer}
\end{tikzpicture}}
\end{center}
(In this diagram, as in the diagram representing the identity morphism, a single string represents a bundle of strings for simplification).
The category $\C_0$ is also rigid. That is, every object has a dual object, the dual of $W^{a,b}$ being $W^{b,a}$. It will be enough to describe the evaluation and coevaluation for $W:=W^{0,1}$ since $W$ and $W^*$ tensor-generate the category $\C_0$. The evaluation $W^*\ot W\to \one$ is given by $\Id_W\in Con^{1,1}$. Similarly, the coevaluation $\one\to W\ot W^*$ is given by the same element $Id_W\in Con^{1,1}$ (notice that we realize here $Con^{1,1}$ as the hom-space between objects in the category in two different ways).
A direct calculation with diagrams now reveals the fact that the compositions $W\to W\ot W^*\ot W\to W$ and $W^*\to W^*\ot W\ot W^*\to W^*$ are indeed the identity morphisms. 

Finally, we define $\C_{univ}$ to be the additive envelope of $\C_0$. Its objects are finite direct sums of objects of $\C_0$, and the morphisms are given by $$\Hom_{\C_{univ}}(\oplus_i A_i,\oplus_j B_j) = \bigoplus_{i,j}\Hom_{\C_0}(A_i,B_j),$$ where $A_i$ and $B_j$ are objects of $\C_0$. We can thus consider $W = W^{1,0}$ as an object of $\C_{univ}$. Moreover, since $x_i\in Con^{p_i,q_i}$, we get the structure $(W,(x_i))$ of type $((p_i,q_i))$ in $\C_{univ}$, which we call the \emph{tautological structure}. We claim the following:

\begin{proposition}\label{prop:universalproperty}
Let $\D$ be a $K$-linear symmetric monoidal category. Isomorphism classes of symmetric monoidal $K$-linear functors $F:\C_{univ}\to \D$ are in one to one correspondence with isomorphism classes of dualizable algebraic structures of type $((p_i,q_i))$ in $\D$. 
\end{proposition}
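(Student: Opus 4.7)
The plan is to build inverse assignments between the two classes and verify that they respect isomorphism. In the forward direction, given a symmetric monoidal $K$-linear functor $F:\C_{univ}\to\D$, apply $F$ to the tautological structure $(W,(x_i))$ to obtain the structure $(F(W),(F(x_i)))$ in $\D$. Since $W$ admits the dual $W^*=W^{0,1}$ in $\C_{univ}$, with evaluation and coevaluation given by $\Id_W\in Con^{1,1}$, and since $F$ is symmetric monoidal, $F(W)$ is dualizable in $\D$ with dual $F(W^*)$.

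In the reverse direction, given a dualizable algebraic structure $(A,(y_i))$ of type $((p_i,q_i))$ in $\D$, define a functor $F:\C_{univ}\to\D$ as follows. On objects of $\C_0$ set $F(W^{a,b})=A^{\ot a}\ot (A^*)^{\ot b}$, and extend to finite direct sums in the evident way. On morphisms of $\C_0$, use the realization map: an element $m\in \Hom_{\C_0}(W^{a,b},W^{c,d})=Con^{c+b,d+a}$ is first realized as $\Re^{c+b,d+a}_A(m)\in \Hom_{\D}(A^{\ot d+a},A^{\ot c+b})$, and then reinterpreted via the rigidity adjunction in $\D$ as a morphism $A^{\ot a}\ot (A^*)^{\ot b}\to A^{\ot c}\ot (A^*)^{\ot d}$. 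Extend $F$ $K$-linearly to the additive envelope.

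The core of the argument is then to verify that $F$ is a well-defined symmetric monoidal functor. Well-definedness on morphisms relies on the fact that $\Re^{p,q}_A$ is well-defined on equivalence classes of diagrams, which is exactly the content of Section 5 and Definition 7.2 of \cite{meirUIR}. Preservation of composition, of tensor products, of unit constraints, of the symmetry, and of evaluation and coevaluation each reduces to a single diagrammatic identity: the composition, tensor product and symmetry operations were defined in $\C_0$ by pasting the string diagrams in exactly the way that the graphical calculus for rigid symmetric monoidal categories tells us the corresponding operations act on the realizations in $\D$. So each axiom follows from the observation that both sides of the required equality in $\D$ are the realization of the same diagram. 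This is the main technical step but is a routine verification once the graphical framework is in place.

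Finally, one checks that the two assignments are mutually inverse up to monoidal natural isomorphism. Starting from $(A,(y_i))$, building $F$, and applying $F$ to $(W,(x_i))$ recovers $(A,(y_i))$ tautologically, since $F(W)=A$ and $F(x_i)=\Re^{p_i,q_i}_A(x_i)=y_i$ by construction. Conversely, given $F:\C_{univ}\to\D$, any symmetric monoidal $K$-linear functor is determined up to monoidal natural isomorphism by its value on the tensor-generating object $W$ (and its dual) together with its value on the generating morphisms $x_i$, because every object of $\C_{univ}$ is a direct sum of tensor powers of $W$ and $W^*$, and every morphism of $\C_0$ is a $K$-linear combination of diagrams built out of the $x_i$, evaluations, coevaluations and symmetries. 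Hence $F$ is monoidally isomorphic to the functor built from $(F(W),(F(x_i)))$. The one potential obstacle is the coherence/well-definedness step for the morphism assignment, and this is precisely what the $Con^{p,q}$ formalism of \cite{meirUIR} was designed to handle.
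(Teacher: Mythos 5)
Your proposal is correct and follows essentially the same route as the paper: apply $F$ to the tautological structure $(W,(x_i))$ in one direction, and in the other define $F$ on $\C_0$ by $W^{a,b}\mapsto A^{\ot a}\ot (A^*)^{\ot b}$ with morphisms sent via the realization maps $\Re^{p,q}_A$, extending additively to $\C_{univ}$. You spell out the coherence/well-definedness verifications and the mutual-inverse check in more detail than the paper (which delegates these to the $Con^{p,q}$ formalism of \cite{meirUIR} and the uniqueness of duality data), but the substance of the argument is the same.
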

\begin{proof}
Given such a functor $F$ we can consider the $\D$-object $A:=F(W)$. 
Since $x_i$ can be considered as a morphism in $\Hom_{\C_{univ}}(W^{\ot q_i},W^{\ot p_i})$, its image $y_i:=F(x_i)$ will be a morphism in $\Hom_{\D}(A^{\ot q_i},A^{\ot p_i})$, where we use here the monoidality of $F$ to identify $F(W^{\ot n})$ with $F(W)^{\ot n}$. We thus get the algebraic structure $(A,(y_i))$ in $\D$. The object $A$ is dualizable with dual $F(W^*)$ (we use here again the monoidality of $F$).

Conversely, assume that $(A,(y_i))$ is an algebraic structure of type $((p_i,q_i))$ in $\D$. It will be enough to define the corresponding functor on the category $\C_0$, as this will extends uniquely to a functor from $\C_{univ}$ by additivity. We define $F$ on objects by $F(W^{a,b}) = A^{a,b} = A^{\ot a}\ot (A^*)^{\ot b}$. The action of $F$ on morphisms is defined using the maps $\Re^{p,q}$ from Subsection \ref{subsec:algstr}.

The monoidal structure on $F$ is given by the composition $$F(W^{a,b})\ot F(W^{c,d}) = A^{\ot a}\ot (A^*)^{\ot b}\ot A^{\ot c}\ot (A^*)^{\ot d}\cong $$ $$ A^{\ot a}\ot A^{\ot c}\ot (A^*)^{\ot b}\ot (A^*)^{\ot d} = A^{\ot a+c}\ot (A^*)^{b+d} = F(W^{a+c,b+d}) = F(W^{a,b}\ot W^{c,d})$$ where we used here the symmetric monoidal isomorphism $(A^*)^{\ot b}\ot A^{\ot c}\to A^{\ot c}\ot (A^*)^{\ot b}$ in $\D$. Finally. an isomorphism between structures will give rise to an isomorphism between functors and vice versa.
\end{proof}
\begin{definition} The functor $F:\C_{univ}\to \D$ constructed in the proposition will be denoted by $F_A$. 
\end{definition}
\subsection{Universal categories for theories}
We recall from Section 7 in \cite{meirUIR} that a \emph{theory} is a subset $\T\subseteq \sqcup Con^{p,q}$. The elements of $\T$ are called \emph{axioms}. \emph{Models} of $\T$ are structures $(A,(y_i))$ in some $K$-linear rigid symmetric monoidal category $\D$ such that for every $x\in \T$ it holds that $F_A(x)=0$, where we interpret $x\in Con^{p,q}$ as a morphism $W^{\ot q}\to W^{\ot p}$ in $\C_{univ}$. Axioms can describe associativity, commutativity, the Jacobi identity, and so on.
\begin{definition} We write $\I^{\T}$ for the tensor ideal in $\C_{univ}$ generated by the elements of $\T$, interpreted as morphisms in $\C_{univ}$ as above. We define $\C_{univ}^{\T} = \C_{univ}/\I_{\T}$. 
\end{definition}
\begin{remark} The tensor ideal generated by a set of morphisms always exists. We can either describe it as the intersection of all tensor ideals that contain the given set, or as the collection of morphisms generated from the given set by taking compositions and tensor products. 
\end{remark}
The following universal property follows immediately from the universal property of $\C_{univ}$ (Proposition \ref{prop:universalproperty}), and the universal property of the quotient category: 
\begin{proposition}\label{prop:universaltheory}
Let $\D$ be a $K$-linear symmetric monoidal category. Isomorphism classes of symmetric monoidal $K$-linear functors $F:\C_{univ}^{\T}\to \D$ are in one to one correspondence with isomorphism classes of dualizable models of $\T$ in $\D$. 
\end{proposition}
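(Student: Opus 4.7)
The proof plan is to combine the universal property of $\C_{univ}$ (Proposition \ref{prop:universalproperty}) with the universal property of the quotient category $\C_{univ}^{\T} = \C_{univ}/\I^{\T}$. I will show that under the bijection of Proposition \ref{prop:universalproperty}, the functors $F:\C_{univ}\to\D$ that factor through the quotient $\C_{univ}^{\T}$ correspond exactly to those structures $(A,(y_i))$ which are models of $\T$.

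First I unwind the definitions. By the defining property of the quotient category recalled above, a symmetric monoidal $K$-linear functor $F:\C_{univ}^{\T}\to\D$ is the same thing, up to isomorphism, as a symmetric monoidal $K$-linear functor $\tilde F:\C_{univ}\to\D$ which sends every morphism of $\I^{\T}$ to zero: any such $\tilde F$ factors uniquely through the canonical projection $\C_{univ}\to\C_{univ}^{\T}$, and conversely precomposing any $F$ with the projection recovers such a $\tilde F$. Natural isomorphisms between the corresponding functors transport between the two sides under the same recipe.

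Next, by Proposition \ref{prop:universalproperty}, isomorphism classes of such functors $\tilde F:\C_{univ}\to\D$ are in bijection with isomorphism classes of dualizable structures $(A,(y_i))$ of type $((p_i,q_i))$ in $\D$, via $\tilde F = F_A$. Under this bijection, the condition that $\tilde F$ vanishes on $\I^{\T}$ is equivalent to the condition that $F_A(x)=0$ for every $x\in\T$: indeed, the remark preceding the proposition describes $\I^{\T}$ as the collection of morphisms generated from $\T$ by taking compositions with arbitrary morphisms, tensor products with identities, and $K$-linear combinations, and the symmetric monoidal $K$-linear functor $F_A$ respects each of these operations. But $F_A(x)=0$ for every $x\in\T$ is exactly the defining condition for $(A,(y_i))$ to be a model of $\T$. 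Chaining the two bijections gives the desired correspondence.

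There is no real obstacle here; the statement is a formal consequence of two already-established universal properties. The only point worth being careful about is the last equivalence of the previous paragraph, which relies on the explicit description of the tensor ideal generated by a set of morphisms. This is essentially immediate from functoriality and monoidality, since the class of morphisms annihilated by a symmetric monoidal $K$-linear functor is automatically closed under the operations generating $\I^{\T}$.
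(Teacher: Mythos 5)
Your proposal is correct and follows the same route as the paper, which simply observes that the statement follows from the universal property of $\C_{univ}$ (Proposition \ref{prop:universalproperty}) together with the universal property of the quotient category; you have merely spelled out the details (factoring through the quotient, and the equivalence between vanishing on $\I^{\T}$ and vanishing on $\T$) that the paper leaves implicit.
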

\begin{definition} We write $\winf^{\T} := \winf/I_{\T}$, where $I_{\T} = \I_{\T}(\one,\one)$
\end{definition}
\begin{remark}
We have $\End_{\C^{\T}_{univ}}(\one) = \winf /I_{\T}$, where $I_{\T}$ is the ideal generated by $\pair(x,y)\in \winf$ for $x\in Con^{p,q}\cap \T$ and $y\in Con^{q,p}$, for some $p,q\in \N$. This ideal was denoted by $\I_{\T}$ in Section 7 of \cite{meirUIR}. We write here $\I_{\T}$ for the tensor ideal in $\C_{univ}$. 
\end{remark} 
We shall see in Section \ref{sec:examples} that theories can provide many good characters. Indeed, in many interesting cases it will hold that the hom-spaces in $\C_{univ}^{\T}$ are already finite dimensional. 

\section{The category $\C_{\chi}$ and a proof of Theorem \ref{thm:main1}}\label{sec:constcchi}
The construction in this section will be based on a fixed type $((p_i,q_i))$ of algebraic structures. Let $\chi:\winf\to K$ be a character. It induces pairings $$\pair^{p,q}_{\chi}:Con^{p,q}\ot Con^{q,p}\stackrel{\pair^{p,q}}{\longrightarrow} \winf\stackrel{\chi}{\to} K$$ for every $p,q\in \N$. We call $f\in Con^{p,q}$ \emph{$\chi$-negligible} if it is in the radical of the above pairing. We denote by $N^{p,q}_{\chi}\subseteq Con^{p,q}$ the subspace of all $\chi$-negligible morphisms. 
We will now construct the category $\C_{\chi}$ as a quotient of the category $\C_{univ}$. For every $a,b,c,d\in \N$ we define $$\Nn_{\chi}(W^{a,b},W^{c,d}) = N^{c+b,d+a}_{\chi}\subseteq Con^{c+b,d+a} = \Hom_{\C_{univ}}(W^{a,b},W^{c,d}).$$
We extend the definition of $\Nn_{\chi}$ to all objects of $\C_{univ}$ by the rule 
$\Nn_{\chi}(\oplus_i A_i,\oplus_j B_j) = \bigoplus_{i,j}\Nn_{\chi}(A_i,B_j)$.
We claim the following:
\begin{lemma}\label{lem:pairideal}
A morphism $f\in \Hom_{\C_{univ}}(A_1,A_2)$ is in $\Nn_{\chi}(A_1,A_2)$ if and only if for every $g\in \Hom_{\C_{univ}}(A_2,A_1)$ it holds that $\chi(\Tr(f\circ g))=0$.
\end{lemma}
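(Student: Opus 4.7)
The plan is to reduce to the case of generating objects in $\C_0$ and then observe that the trace pairing on $\Hom$-spaces in $\C_{univ}$ agrees, under the duality identifications, with the pairings $\pair^{p,q}$ used in the definition of $\Nn_\chi$.

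First, by the way $\Nn_\chi$ is extended to direct sums, and because composition and trace are bilinear and distribute over finite direct sums (so $\Tr(f\circ g) = \sum_i \Tr(f_{ii}\circ g_{ii})$ for block-decomposed maps), it suffices to prove the statement when $A_1 = W^{a,b}$ and $A_2 = W^{c,d}$ are objects of $\C_0$. In that case, by construction,
\[
\Hom_{\C_{univ}}(W^{a,b},W^{c,d}) = Con^{c+b,\,d+a}, \qquad \Hom_{\C_{univ}}(W^{c,d},W^{a,b}) = Con^{a+d,\,b+c},
\]
and $\Nn_\chi(W^{a,b},W^{c,d}) = N^{c+b,\,d+a}_\chi$ is by definition the radical of the pairing $\pair^{c+b,\,d+a}_\chi$ with $Con^{d+a,\,c+b} = Con^{a+d,\,b+c}$.

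The key step is then to verify the identity
\[
\Tr(f\circ g) \;=\; \pair^{c+b,\,d+a}(f,g) \quad \text{in } \winf,
\]
for $f\in Con^{c+b,\,d+a}$ and $g\in Con^{a+d,\,b+c}$, where on the left $f,g$ are viewed as morphisms in $\C_{univ}$ via the duality adjunctions, and on the right via the commutative diagram following Definition 7.2 of \cite{meirUIR}. Diagrammatically, both sides amount to stacking the diagrams representing $f$ and $g$ and closing all the free strands by evaluation/coevaluation: the trace in $\C_{univ}$ closes off the $a{+}b$ free strands of the composite endomorphism of $W^{a,b}$, while $\pair$ closes off the $(c+b)+(d+a)$ free strands of the composite in $\End(\one)$ viewed through the $W^{\otimes(d+a)}\to W^{\otimes(c+b)}$ incarnation. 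The two operations yield the same closed diagram, because bending strands via $ev$ and $coev$ before closing up is the same as closing them up directly; this is an application of the zig-zag identities that make $\C_{univ}$ rigid (already verified when $\C_0$ was shown to be rigid in Section \ref{sec:unicat}).

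Granting this identification, the lemma follows: $f\in\Nn_\chi(W^{a,b},W^{c,d}) = N^{c+b,\,d+a}_\chi$ means precisely that $\chi(\pair^{c+b,\,d+a}(f,h))=0$ for every $h\in Con^{d+a,\,c+b}$, and under the duality bijection $h\leftrightarrow g$ this is the same as saying $\chi(\Tr(f\circ g))=0$ for every $g\in\Hom_{\C_{univ}}(A_2,A_1)$. The only step that is not purely formal is the diagrammatic identification in the displayed equation above, and this is a routine string-diagram manipulation rather than a genuine obstacle.
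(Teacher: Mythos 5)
Your proposal is correct and takes essentially the same route as the paper: reduce to objects $W^{a,b}$, $W^{c,d}$ of $\C_0$, identify the categorical trace pairing with the pairing $\pair^{c+b,d+a}$ by closing up the string diagrams, and conclude because this identification is a linear bijection on $\Hom$-spaces, so the radical (and the ``for all $g$'' quantification) is unchanged. The one imprecision is that your displayed identity does not hold on the nose: the paper's diagram computation gives $\Tr(f\circ g)=\pair^{c+b,d+a}\bigl(f\ot L^{(a+d)}_{\mu_2}\,g\,L^{(c+b)}_{\mu_1}\bigr)$ for explicit cyclic shifts $\mu_1,\mu_2$ coming from the bending of the dual strands, but since pre- and post-composition with these permutation morphisms is invertible, your ``duality bijection'' step absorbs this twist and the argument goes through.
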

\begin{proof}
It will be enough to prove the claim for the case where $A_1= W^{a,b}$ and $A_2=W^{c,d}$. We use the fact that if $f$ is given by a diagram $Di_1$ and $g$ is given by a diagram $Di_2$ then $\Tr(f\circ g)$ is given by the closed diagram 
\begin{center}\scalebox{0.7}{
\begin{tikzpicture}
	\begin{pgfonlayer}{nodelayer}
		\node [style=none] (0) at (0.75, 1.75) {};
		\node [style=none] (1) at (1, 1.75) {};
		\node [style=none] (2) at (2.25, 1.75) {};
		\node [style=none] (3) at (2.5, 1.75) {};
		\node [style=none] (4) at (0.75, 2.75) {};
		\node [style=none] (5) at (1, 2.75) {};
		\node [style=none] (6) at (2.25, 2.75) {};
		\node [style=none] (7) at (2.5, 2.75) {};
		\node [style=none] (8) at (0.75, 3.5) {$\overbrace{\phantom{aaaaa}}$};
		\node [style=none] (9) at (0.75, 4) {$c$};
		\node [style=none] (10) at (2.75, 3.75) {$\overbrace{\phantom{aaaaa}}$};
		\node [style=none] (11) at (2.75, 4.25) {$b$};
		\node [style=none] (12) at (4.5, 0) {};
		\node [style=none] (13) at (4.25, 0) {};
		\node [style=none] (14) at (4.5, 2.75) {};
		\node [style=none] (15) at (4.25, 2.75) {};
		\node [style=none] (16) at (7.5, 1.75) {};
		\node [style=none] (17) at (7.25, 1.75) {};
		\node [style=none] (18) at (9.25, 1.75) {};
		\node [style=none] (19) at (9.5, 1.75) {};
		\node [style=none] (20) at (7.5, 2.75) {};
		\node [style=none] (21) at (7.25, 2.75) {};
		\node [style=none] (22) at (9.25, 2.75) {};
		\node [style=none] (23) at (9.5, 2.75) {};
		\node [style=none] (24) at (7.25, 3.75) {$\overbrace{\phantom{aaaaa}}$};
		\node [style=none] (25) at (7.25, 4.25) {$a$};
		\node [style=none] (26) at (9.5, 3.5) {$\overbrace{\phantom{aaaaa}}$};
		\node [style=none] (27) at (9.5, 4) {$d$};
		\node [style=none] (28) at (0.75, 0) {};
		\node [style=none] (29) at (1, 0) {};
		\node [style=none] (30) at (2.5, 0) {};
		\node [style=none] (31) at (2.25, 0) {};
		\node [style=none] (32) at (0.75, 1) {};
		\node [style=none] (33) at (1, 1) {};
		\node [style=none] (34) at (2.5, 1) {};
		\node [style=none] (35) at (2.25, 1) {};
		\node [style=none] (40) at (7.5, 0) {};
		\node [style=none] (41) at (7.25, 0) {};
		\node [style=none] (42) at (9.25, 0) {};
		\node [style=none] (43) at (9.5, 0) {};
		\node [style=none] (44) at (7.5, 1) {};
		\node [style=none] (45) at (7.25, 1) {};
		\node [style=none] (46) at (9.25, 1) {};
		\node [style=none] (47) at (9.5, 1) {};
		\node [style=none] (52) at (5.25, 0) {};
		\node [style=none] (53) at (5.5, 0) {};
		\node [style=none] (54) at (5.25, 2.75) {};
		\node [style=none] (55) at (5.5, 2.75) {};
		\node [style=multi function small] (56) at (1.75, 1.25) {$Di_1$};
		\node [style=multi function small] (57) at (8, 1.25) {$Di_2$};
		\node [style=none] (58) at (0.5, 1.75) {};
		\node [style=none] (59) at (0.5, 2.75) {};
		\node [style=none] (60) at (0.5, 0) {};
		\node [style=none] (61) at (0.5, 1) {};
		\node [style=none] (62) at (9.75, 1.75) {};
		\node [style=none] (63) at (9.75, 2.75) {};
		\node [style=none] (64) at (9.75, 0) {};
		\node [style=none] (65) at (9.75, 1) {};
		\node [style=none] (66) at (7.75, 1.75) {};
		\node [style=none] (67) at (7.75, 2.75) {};
		\node [style=none] (68) at (7.75, 0) {};
		\node [style=none] (69) at (7.75, 1) {};
		\node [style=none] (71) at (5, 2.75) {};
		\node [style=none] (72) at (5, 0) {};
		\node [style=none] (75) at (4, 2.75) {};
		\node [style=none] (76) at (4, 0) {};
		\node [style=none] (78) at (2.75, 1.75) {};
		\node [style=none] (79) at (2.75, 2.75) {};
		\node [style=none] (80) at (2.75, 0) {};
		\node [style=none] (81) at (2.75, 1) {};
		\node [style=none] (82) at (11.75, 0) {};
		\node [style=none] (83) at (11.5, 0) {};
		\node [style=none] (84) at (11.75, 2.75) {};
		\node [style=none] (85) at (11.5, 2.75) {};
		\node [style=none] (87) at (12, 0) {};
		\node [style=none] (88) at (12, 2.75) {};
		\node [style=none] (89) at (-1.25, 0) {};
		\node [style=none] (90) at (-1.5, 0) {};
		\node [style=none] (91) at (-1.25, 2.75) {};
		\node [style=none] (92) at (-1.5, 2.75) {};
		\node [style=none] (93) at (-1, 0) {};
		\node [style=none] (94) at (-1, 2.75) {};
	\end{pgfonlayer}
	\begin{pgfonlayer}{edgelayer}
		\draw (4.center) to (0.center);
		\draw (5.center) to (1.center);
		\draw (6.center) to (2.center);
		\draw (7.center) to (3.center);
		\draw (14.center) to (12.center);
		\draw (15.center) to (13.center);
		\draw [bend left=90, looseness=1.25] (6.center) to (14.center);
		\draw [bend left=90, looseness=1.25] (7.center) to (15.center);
		\draw (20.center) to (16.center);
		\draw (21.center) to (17.center);
		\draw (22.center) to (18.center);
		\draw (23.center) to (19.center);
		\draw (32.center) to (28.center);
		\draw (33.center) to (29.center);
		\draw (34.center) to (30.center);
		\draw (35.center) to (31.center);
		\draw (44.center) to (40.center);
		\draw (45.center) to (41.center);
		\draw (46.center) to (42.center);
		\draw (47.center) to (43.center);
		\draw [bend right=90, looseness=0.75] (13.center) to (40.center);
		\draw [bend right=90, looseness=0.75] (12.center) to (41.center);
		\draw (54.center) to (52.center);
		\draw (55.center) to (53.center);
		\draw [bend right=90, looseness=0.75] (31.center) to (53.center);
		\draw [bend right=90, looseness=0.75] (30.center) to (52.center);
		\draw [bend left=90] (55.center) to (21.center);
		\draw [bend left=90] (54.center) to (20.center);
		\draw (59.center) to (58.center);
		\draw (61.center) to (60.center);
		\draw (63.center) to (62.center);
		\draw (65.center) to (64.center);
		\draw (67.center) to (66.center);
		\draw (69.center) to (68.center);
		\draw (79.center) to (78.center);
		\draw (81.center) to (80.center);
		\draw [bend left=90, looseness=1.25] (79.center) to (75.center);
		\draw [bend right=90, looseness=0.75] (80.center) to (72.center);
		\draw [bend left=270] (67.center) to (71.center);
		\draw [bend right=270, looseness=0.75] (68.center) to (76.center);
		\draw (84.center) to (82.center);
		\draw (85.center) to (83.center);
		\draw (88.center) to (87.center);
		\draw [bend left=90] (63.center) to (85.center);
		\draw [bend left=270] (84.center) to (23.center);
		\draw [bend left=90] (22.center) to (88.center);
		\draw [bend right=90, looseness=0.50] (60.center) to (87.center);
		\draw [bend right=90, looseness=0.50] (28.center) to (82.center);
		\draw [bend right=90, looseness=0.50] (29.center) to (83.center);
		\draw (91.center) to (89.center);
		\draw (92.center) to (90.center);
		\draw (94.center) to (93.center);
		\draw [bend left=270] (59.center) to (94.center);
		\draw [bend left=270] (4.center) to (91.center);
		\draw [bend left=270] (5.center) to (92.center);
		\draw [bend right=90, looseness=0.50] (90.center) to (64.center);
		\draw [bend right=90, looseness=0.50] (89.center) to (43.center);
		\draw [bend right=90, looseness=0.50] (93.center) to (42.center);
		\draw (71.center) to (72.center);
		\draw (75.center) to (76.center);
	\end{pgfonlayer}
\end{tikzpicture}}
\end{center}
Define $\mu_1\in S_{c+b}$ by $\mu_1(i) = i+b \text{ mod } c+b$
and $\mu_2\in S_{a+d}$ by $\mu_2(i) =  i+d\text{ mod }a+d$. 
The above diagram shows that 
$$\Tr(f\circ g) = \pair^{p,q}(Di_1\ot L^{(a+d)}_{\mu_2}Di_2L^{(c+b)}_{\mu_1}).$$
Since pre- and post-composing with $L^{(n)}_{\sigma}$ are invertible maps for every $n$ and every $\sigma\in S_n$, the radical does not change. By applying the character $\chi$ to both sides of the last equation we get the desired result. 
\end{proof}
\begin{proposition}
The collection $\Nn_{\chi}(-,-)$ is a tensor ideal in $\C_{univ}$ (see Definition \ref{def:tensorideal}).  
\end{proposition}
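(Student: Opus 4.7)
The strategy is to leverage Lemma \ref{lem:pairideal}, which characterizes $\Nn_{\chi}(A_1,A_2)$ intrinsically: a morphism $f:A_1\to A_2$ lies in $\Nn_{\chi}(A_1,A_2)$ if and only if $\chi(\Tr(f\circ g))=0$ for every $g:A_2\to A_1$. With this reformulation in hand, both tensor ideal axioms reduce to standard manipulations of traces in rigid symmetric monoidal categories.

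For the composition axiom, suppose $f\in \Nn_{\chi}(A,B)$, $g\in\Hom(B,C)$, and $h\in\Hom(D,A)$. For any test morphism $k\in \Hom(C,D)$, the cyclicity of the trace in a rigid symmetric monoidal category yields $\Tr((gfh)\circ k)=\Tr(f\circ (hkg))$. Since $hkg\in\Hom(B,A)$ and $f$ is $\chi$-negligible, applying $\chi$ gives $\chi(\Tr((gfh)\circ k))=0$. As $k$ was arbitrary, Lemma \ref{lem:pairideal} forces $gfh\in\Nn_{\chi}(D,C)$.

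For the tensor axiom, let $f\in\Nn_{\chi}(A,B)$ and let $C$ be any object. Given $g:B\otimes C\to A\otimes C$, form the partial trace $\Tr_C(g):B\to A$ using $coev_C$, $ev_C$, and the symmetry of $\C_{univ}$ in the usual way. A direct string-diagram calculation — essentially sliding $1_C$ around the coevaluation/evaluation loop — shows
\[
\Tr\bigl((f\otimes 1_C)\circ g\bigr)=\Tr\bigl(f\circ \Tr_C(g)\bigr).
\]
Applying $\chi$ and invoking $f\in\Nn_{\chi}(A,B)$ with the test morphism $\Tr_C(g):B\to A$ gives zero; by Lemma \ref{lem:pairideal} again, $f\otimes 1_C\in \Nn_{\chi}(A\otimes C,B\otimes C)$.

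The extension to arbitrary objects of $\C_{univ}$ (finite direct sums of $W^{a,b}$) is automatic from the definition $\Nn_{\chi}(\oplus_i A_i,\oplus_j B_j)=\bigoplus_{i,j}\Nn_{\chi}(A_i,B_j)$ together with additivity of composition, tensor product, and trace. The only point requiring care is the diagrammatic verification of the two trace identities (cyclicity and reduction of a full trace to a partial trace), but these are standard consequences of rigidity and the axioms of a symmetric monoidal category, and can be read off directly from the string diagrams used throughout Section \ref{sec:unicat}.
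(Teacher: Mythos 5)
Your proof is correct. For the composition axiom you argue exactly as the paper does: Lemma \ref{lem:pairideal} plus cyclicity of the trace, $\Tr(gfhk)=\Tr(f\,hkg)$, with $hkg$ serving as the test morphism. The only divergence is in the tensor axiom: the paper disposes of it by citing Lemme 6.2.1 and Lemme 7.1.1 of Andr\'e--Kahn (the general fact that negligible morphisms in a rigid symmetric monoidal category form a tensor ideal), whereas you prove it inline via the partial-trace identity $\Tr\bigl((f\otimes 1_C)\circ g\bigr)=\Tr\bigl(f\circ \Tr_C(g)\bigr)$, which follows from rigidity and naturality, noting that $\Tr_C(g)$ is again a morphism of $\C_{univ}$ and so is a legitimate test morphism for Lemma \ref{lem:pairideal}. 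This is essentially the content of the cited lemmas, so mathematically the two routes coincide; what your version buys is self-containedness (no external reference needed), at the cost of having to verify the partial-trace identity, which you correctly identify as a routine string-diagram computation. Your closing remark on direct sums is harmless but not really needed, since Lemma \ref{lem:pairideal} is stated for arbitrary objects of $\C_{univ}$ and the componentwise definition of $\Nn_{\chi}$ on sums already feeds into it.
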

\begin{proof}
The first condition for a tensor ideal follows easily from the previous lemma, using the fact that $\Tr(f\circ g) = \Tr(g\circ f)$. The second condition follows from Lemme 6.2.1 and also from Lemme 7.1.1 of \cite{Andrekahn}.
\end{proof}
If $(A,(y_i))$ is a structure of type $((p_i,q_i))$ in a $K$-good category $\D$, then by Proposition \ref{prop:universalproperty} we get a functor $F_A:\C_{univ}\to \D$. We also get the character of invariants $\chi= \chi_A$. We claim the following:
\begin{lemma}\label{lem:cofinite}
If $B,C$ are two objects of $\C_{univ}$ and $f\in\Hom_{\C_{univ}}(B,C)$ satisfies $F_A(f)=0$ then $f\in \Nn_{\chi}(B,C)$ 
\end{lemma}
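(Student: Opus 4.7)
The plan is to reduce to the characterization of $\Nn_\chi$ given in Lemma \ref{lem:pairideal}, and then exploit the fact that $F_A$ is symmetric monoidal, hence preserves trace.

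First, by Lemma \ref{lem:pairideal} (extended additively to $\C_{univ}$), to show $f\in \Nn_\chi(B,C)$ it suffices to prove that $\chi(\Tr(f\circ g))=0$ for every $g\in \Hom_{\C_{univ}}(C,B)$. So fix such a $g$ and set $h = f\circ g \in \End_{\C_{univ}}(B)$; its trace $\Tr(h)$ lies in $\End_{\C_{univ}}(\one)=\winf$.

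Next, I would apply $F_A$ to this trace. Since $F_A:\C_{univ}\to \D$ is a symmetric monoidal $K$-linear functor (by Proposition \ref{prop:universalproperty}), and trace is defined purely in terms of the monoidal structure (unit, evaluation, coevaluation, symmetry) together with the morphism itself, $F_A$ intertwines trace with the induced ring homomorphism $\End_{\C_{univ}}(\one)\to \End_{\D}(\one)$, as noted in Subsection \ref{subsec:algstr}. That is, $F_A(\Tr(h)) = \Tr(F_A(h)) = \Tr(F_A(f)\circ F_A(g))$. By hypothesis $F_A(f)=0$, so $F_A(f)\circ F_A(g)=0$, and therefore $\Tr(F_A(h))=0$ in $\End_{\D}(\one)=K$.

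Finally, by definition of the character of invariants $\chi_A$, the ring homomorphism $\winf=\End_{\C_{univ}}(\one)\to \End_{\D}(\one)=K$ induced by $F_A$ is precisely $\chi=\chi_A$. Hence $\chi(\Tr(h))=F_A(\Tr(h))=0$, which is exactly the condition needed. Since $g$ was arbitrary, $f\in \Nn_\chi(B,C)$. There is no real obstacle here: the whole argument is essentially the observation that $F_A$ sends $\chi$-negligibles to zero because it factors the $\chi$-pairing through traces in $\D$, and the additive extension of $\Nn_\chi$ to direct sums in $\C_{univ}$ is immediate from the componentwise definition.
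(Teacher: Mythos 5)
Your proof is correct and follows essentially the same route as the paper: reduce via Lemma~\ref{lem:pairideal} to checking $\chi(\Tr(f\circ g))=0$, use that the symmetric monoidal functor $F_A$ commutes with trace and induces $\chi$ on $\End(\one)$, and invoke the hypothesis $F_A(f)=0$.
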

\begin{proof}
Since $F_A$ is a symmetric monoidal functor it commutes with taking traces.
(see Subsection \ref{subsec:algstr}). Therefore, for every $g:B\to A$ we have that $F_A(\Tr(f\circ g)) = \chi(\Tr(f\circ g)) = \Tr(F_A(f\circ g)) = \Tr(F_A(f)\circ F_A(g)) = 0$ because $F_A(f)=0$. We used here the fact that $\chi$ is the character of invariants of $(A,(y_i))$. By Lemma \ref{lem:pairideal} we get the result.
\end{proof}
The following definition appeared in the statement of Theorem \ref{thm:main1}.
\begin{definition}\label{def:good}
The character $\chi:\winf\to K$ is called good if the following two conditions hold:
\begin{enumerate}
\item For every $p,q\in\N$ the subspace $N_{\chi}^{p,q}\subseteq Con^{p,q}$ has finite codimension.
\item If $B\in \C_{univ}$ and $T:B\to B$ satisfies that $T^r$ is $\chi$-negligible for some $r>0$ then $\chi(\Tr(T))=0$. 
\end{enumerate}
\end{definition}
We can now prove the following:
\begin{proposition}\label{prop:halfmainthm1}
If $\chi$ is the character of invariants of an algebraic structure $(A,(y_i))$ in a $K$-good category $\D$ then it is a good character.
\end{proposition}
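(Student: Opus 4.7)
My plan is to verify the two conditions of Definition \ref{def:good} separately, using the symmetric monoidal functor $F_A : \C_{univ} \to \D$ supplied by Proposition \ref{prop:universalproperty} and the finiteness hypotheses on $\D$.

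For condition (1), I would invoke Lemma \ref{lem:cofinite}: the kernel of the realization map $\Re^{p,q}_A : Con^{p,q} \to \Hom_{\D}(A^{\ot q},A^{\ot p})$ is contained in $N^{p,q}_\chi$. Since $\D$ is $K$-good its hom-spaces are finite dimensional, so $\ker(\Re^{p,q}_A)$ already has finite codimension in $Con^{p,q}$, and therefore so does the larger subspace $N^{p,q}_\chi$.

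For condition (2), suppose $T : B \to B$ in $\C_{univ}$ with $T^r \in \Nn_\chi(B,B)$. By Lemma \ref{lem:pairideal} this means $\chi(\Tr(T^r \circ g)) = 0$ for every $g \in \End_{\C_{univ}}(B)$. Specialising to $g = T^k$ for each $k \geq 0$ gives $\chi(\Tr(T^m)) = 0$ for every $m \geq r$. Since $F_A$ is symmetric monoidal it preserves traces, and its restriction to $\winf = \End_{\C_{univ}}(\one)$ is $\chi$ by construction; hence $\Tr(F_A(T)^m) = 0$ in $K$ for every $m \geq r$. I would then apply Corollary \ref{cor:endshape} to $F_A(T) \in \End_\D(F_A(B))$ in the $K$-good category $\D$: the generating series $Z(X) := \sum_{i \geq 0} \Tr(F_A(T)^i) X^i$ is a good rational function $P(X)/Q(X)$ with $\deg P \leq \deg Q$. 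But by the previous step $Z(X)$ is also a polynomial of degree less than $r$, namely $\sum_{i=0}^{r-1} \Tr(F_A(T)^i) X^i$. The identity $P = Z\cdot Q$ in $K[X]$ then gives $\deg Z = \deg P - \deg Q \leq 0$, so $Z$ is constant; in particular the coefficient of $X$ in $Z$, which is $\Tr(F_A(T)) = \chi(\Tr(T))$, vanishes.

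The main point of the argument is this last combination: the a priori vanishing of the high traces of $F_A(T)$, delivered by Lemma \ref{lem:pairideal}, is promoted to the vanishing of $\Tr(F_A(T))$ itself via the rigidity of trace-generating series in a $K$-good category (Corollary \ref{cor:endshape}), which forces a good rational function that happens to be a polynomial to be constant. The rest of the proof is routine functorial bookkeeping using the universal property of $\C_{univ}$.
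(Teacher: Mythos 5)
Your proof is correct and takes essentially the same route as the paper's: condition (1) is handled exactly as in the paper, via Lemma \ref{lem:cofinite} together with the finite dimensionality of the hom-spaces of $\D$, and condition (2) via trace-preservation of $F_A$ and Corollary \ref{cor:endshape}. The only (harmless) divergence is at the last step of condition (2): the paper asserts that $F_A(T)$ is nilpotent and quotes the final sentence of Corollary \ref{cor:endshape}, whereas you argue directly that a good rational function which is also a polynomial must be constant --- a slightly more careful phrasing of the same point, and in fact the very argument the paper itself uses in the lemma following Proposition \ref{prop:2halfmainthm1}.
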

\begin{proof}
We have seen in Lemma \ref{lem:cofinite} that for every $B,C\in \C_{univ}$ it holds that the kernel $K_{B,C}$ of $\Hom_{\C_{univ}}(B,C)\to \Hom_{\D}(F_A(B),F_A(C))$ is contained $\Nn_{\chi}(B,C)$. Since the hom-spaces in $\D$ are finite dimensional, $K_{B,C}$ is cofinite in $\Hom_{\C_{univ}}(B,C)$. Thus, $\Nn_{\chi}(B,C)$ is cofinite in $\Hom_{\C_{univ}}(B,C)$, and the first condition is satisfied. 

For the second condition, let $T:B\to B$ be an endomorphism in $\C_{univ}$. Assume that $T^r\in \Nn_{\chi}(B,B)$ for some $r>0$. 
This means that for $n\geq r$ $\Tr(F_A(T)^n)=0$, and thus $F_A(T)$ is nilpotent in $\End_{\D}(F_A(B))$. By Corollary \ref{cor:endshape} we see that $\Tr(F_A(T)) = \chi(\Tr(T))=0$ as required. 
\end{proof}
\begin{definition}
Let $\chi:\winf\to K$ be any character. We define 
$\wt{\C}_{\chi}:=\C_{univ}/\Nn_{\chi}$.  and we define $\C_{\chi}$ to be the Karoubian envelope of $\wt{\C}_{\chi}$. 
\end{definition}
Thus, $\wt{\C}_{\chi}$ has the same objects as $\C_{univ}$, but the hom-spaces are given by $\Hom_{\wt{\C}_{\chi}}(A,B) = \Hom_{\C_{univ}}(A,B)/\Nn_{\chi}(A,B)$. The objects of $\C_{\chi}$ can be thought of as pairs $(A,p)$ where $A\in \wt{\C}_{\chi}$ and $p:A\to A$ is an idempotent in $\wt{\C}_{\chi}$. We think of $(A,p)$ as the object $\Im(p)$.
The hom-spaces in $\C_{\chi}$ are given by 
$$\Hom_{\C_{\chi}}((A,p),(B,q)) = q\Hom_{\wt{\C}_{\chi}}(A,B)p,$$ where we use the action of $\End_{\wt{\C}_{\chi}}(A)$ from the right and the action of $\End_{\wt{\C}_{\chi}}(B)$ from the left on $\Hom_{\wt{\C}_{\chi}}(A,B)$.  
We can thus think of the category $\wt{\C}_{\chi}$ as being contained in $\C_{\chi}$. In particular, since $\wt{\C}_{\chi}$ is formed from $\C_{univ}$ by dividing out a tensor ideal, we get a symmetric monoidal functor $F_{\chi}:\C_{univ}\to \wt{\C}_{\chi}\to \C_{\chi}$. 
To avoid cumbersome notation, we will also write $F_{\chi}$ for the functor $\C_{univ}\to \wt{\C}_{\chi}$.
Notice that for the tensor unit $\one$ we get 
$\End_{\C_{\chi}}(\one) = \End_{\C_{univ}}(\one)/\Nn_{\chi}(\one,\one) = \winf/\Ker(\chi)\cong K,$ and that the map $\End_{\C_{univ}}(\one)\to \End_{\C_{\chi}}(\one)$ is just the map $\chi:\winf\to K$. 

We are now ready to prove Theorem \ref{thm:main1}.
The implication $1\Rightarrow 2$ is clear, and $2\Rightarrow 3$ is Proposition \ref{prop:halfmainthm1}. The implication $3\Rightarrow 1$ follows from the next proposition. 
\begin{proposition}\label{prop:2halfmainthm1}
Assume that $\chi$ is a good character. Then the category $\C_{\chi}$ is a semisimple $K$-good category. Denote by $(W,(x_i))$ the image of the tautological structure of $\C_{univ}$ under the functor $F_{\chi}$. Then $\chi$ is the character of invariants of $(W,(x_i))$. In particular, every good character arises as the character of invariants of some algebraic structure in some $K$-good category.
\end{proposition}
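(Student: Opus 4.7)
The plan is to verify the properties defining a $K$-good category and then establish semisimplicity. The category $\C_\chi$ is $K$-linear, rigid, symmetric monoidal and Karoubi closed by construction, these properties being inherited from $\C_{univ}$ via the tensor-ideal quotient by $\Nn_\chi$ followed by the Karoubian envelope. The equality $\End_{\C_\chi}(\one)=K$ was already noted, as was the fact that the induced map $\winf=\End_{\C_{univ}}(\one)\to\End_{\C_\chi}(\one)=K$ is precisely $\chi$; this last observation immediately yields the final assertion that $\chi$ is the character of invariants of the tautological structure $(W,(x_i))$ in $\C_\chi$. The first condition of goodness guarantees that each $\Hom_{\wt{\C}_\chi}(A,B)$ is finite dimensional, and this is inherited by every hom-space in $\C_\chi$, as these are subspaces of such.

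The main step is to prove that for every object $A$ of $\wt{\C}_\chi$ the algebra $R:=\End_{\wt{\C}_\chi}(A)$ is semisimple. By the lemma of Subsection \ref{subsec:findimalg}, it suffices to show that the Jacobson radical $J$ of $R$ vanishes. Let $r\in J$ and choose a lift $\tilde r\in\End_{\C_{univ}}(A)$. Given an arbitrary $g\in\Hom_{\C_{univ}}(A,A)$ with class $x\in R$, the product $rx$ lies in $J$, so $(rx)^n=0$ in $R$ for some $n\geq 1$ since finite-dimensionality makes $J$ nilpotent. This means $(\tilde r\circ g)^n\in\Nn_\chi(A,A)$, and applying the second condition of goodness with $T=\tilde r\circ g$ yields $\chi(\Tr(\tilde r\circ g))=0$. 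Since this holds for every $g$, Lemma \ref{lem:pairideal} forces $\tilde r\in\Nn_\chi(A,A)$, i.e.\ $r=0$ in $R$. Hence $J=0$ and $R$ is semisimple.

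For any object $(A,p)$ of $\C_\chi$, the endomorphism algebra $\End_{\C_\chi}((A,p))=pRp$ is a corner of the semisimple algebra $R$ and is therefore again semisimple. A Karoubi-complete $K$-linear additive category in which every endomorphism algebra is finite dimensional and semisimple is automatically semisimple abelian: Wedderburn decompositions of each $\End(A)$ together with splitting of primitive idempotents provide a decomposition of every object into simple summands (those with endomorphism ring $K$), and every morphism acquires a kernel and cokernel via image factorization. This completes the proof that $\C_\chi$ is a semisimple $K$-good category. The main obstacle is the Jacobson radical argument, where both conditions of goodness are used in tandem: condition (1) ensures that $R$ is finite dimensional so that $J$ is nilpotent, and condition (2) is exactly what is needed to conclude that radical elements pair trivially with all of $R$ under the $\chi$-trace pairing.
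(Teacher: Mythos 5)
Your proof is correct and, in its core, follows the same route as the paper: you kill the Jacobson radical of $\End_{\wt{\C}_{\chi}}(A)$ by lifting a radical element to $\C_{univ}$, using nilpotence in the finite dimensional quotient to see that a power of the lift composed with an arbitrary $g$ is $\chi$-negligible, invoking the second goodness condition to get $\chi(\Tr(\tilde r\circ g))=0$, and then Lemma \ref{lem:pairideal} to conclude the element vanishes; this is exactly the paper's argument, merely phrased via the lemma directly rather than via the induced non-degenerate trace pairing on the quotient algebra. The one place you diverge is the final step. The paper, after decomposing every object into summands with endomorphism ring $K$, proves these are simple by using the non-degenerate trace pairing on $\End_{\C_{\chi}}(B_1\oplus B_2)$: a nonzero $f:B_1\to B_2$ admits $g$ with $\Tr(f\circ g)\neq 0$, so $f\circ g$ and $g\circ f$ are invertible and $B_1\cong B_2$. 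You instead appeal to the general principle that a Karoubi-complete $K$-linear additive category with all endomorphism algebras finite dimensional and semisimple is semisimple abelian. That principle is true, but your gloss (``Wedderburn decompositions \dots provide a decomposition into simple summands'') does not by itself show that the summands with endomorphism ring $K$ are simple, which is precisely the point needing an argument; you should either supply the paper's trace-pairing argument or note that if $X\not\cong Y$ both have endomorphism ring $K$ and $\Hom(X,Y)\neq 0$, then (since any composite $g\circ f$ or $f\circ g$ being a nonzero scalar would force $X\cong Y$) the subspace $\Hom(X,Y)$ sits inside $\End(X\oplus Y)$ as a nonzero ideal with square zero, contradicting semisimplicity. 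With that one sentence added your argument is complete, and it has the mild advantage of not reusing the trace pairing at the last step, while the paper's version keeps everything uniformly tied to the pairing furnished by Lemma \ref{lem:pairideal}.
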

\begin{proof}
Notice first that by dividing out the ideal $\Nn_{\chi}$ we get finite dimensional hom-spaces in $\wt{\C}_{\chi}$ and in $\C_{\chi}$, because $\chi$ is a good character. Moreover, as was stated before the proposition, $\End_{\C_{\chi}}(\one)=K$. The category $\C_{\chi}$ is also rigid. Indeed, $\C_{univ}$ is rigid and therefore $\wt{\C}_{\chi}$ is rigid as every object in $\wt{\C}_{\chi}$ is the image of some object in $\C_{univ}$ under the quotient functor. Since taking duals commutes with projections, $\C_{\chi}$ is rigid as well. 
Thus, we just need to prove that $\C_{\chi}$ is semisimple. 

For this, we begin by proving that all endomorphism algebras in $\wt{\C}_{\chi}$ are semisimple. Let $A\in \wt{\C}_{\chi}$ and let $R = \End_{\wt{\C}_{\chi}}(A)$. The trace pairing $r_1\ot r_2\mapsto \Tr(r_1r_2)$ defines a non-degenerate pairing on $R$, since we showed in Lemma \ref{lem:pairideal} that $\Nn_{\chi}(A,A)$ is the radical of the trace pairing. Let $J$ be the Jacobson radical of $R$. If $r\in J$ then it holds that $rr'\in J$ for every $r'\in R$. In particular, $rr'$ is nilpotent for every $r'\in R$ because $R$ is finite dimensional. But if $rr'=L$ is nilpotent, we can lift it to an endomorphism $T$ in $\C_{univ}$. The fact that $L$ is nilpotent means that some positive power of $T$ is contained in the ideal $\Nn_{\chi}$. By the second condition in Definition \ref{def:good} we know that this means that $\chi(\Tr(T))=\Tr(L)=0$. but this implies that $\Tr(rr')=0$ for all $r'\in R$.
Since $\Tr$ is a non-degenerate pairing on $R$ this implies that $r=0$, so $J=0$ and $R$ is semisimple.

The objects of $\C_{\chi}$ are of the form $(A,p)$ where $p\in\End_{\wt{\C}_{\chi}}(A)$ is an idempotent. We shall think of the object $(A,p)$ as $\Im(p)$. We then have $\End_{\C_{\chi}}((A,p)) = pRp$ where $R = \End_{\wt{\C}_{\chi}}(A)$. Since $R$ is semisimple, it follows that $pRp$ is semisimple as well. We thus see that all endomorphism algebras in $\C_{\chi}$ are semisimple. By taking a Wedderburn decomposition of the endomorphism algebra, and taking a complete orthogonal set of primitive idempotents, we see that every object $A$ in $\C_{\chi}$ decomposes as the direct sum $A=\oplus_i A_i$ where $\End_{\C_{\chi}}(A_i)=K$. 

We claim that all objects $B$ which satisfy $\End_{\C_{\chi}}(B)=K$ are simple. For this it will be enough to prove that if $B_1$ and $B_2$ are two such objects then either they are isomorphic, or $\Hom_{\C_{\chi}}(B_1,B_2)=0$. To do so we consider the object $A=B_1\oplus B_2$. Assume that $\Hom_{\C_{\chi}}(B_1,B_2)\neq 0$. Then $$\End_{\C_{\chi}}(A) = \Hom_{\C_{\chi}}(B_1,B_2)\oplus \Hom_{\C_{\chi}}(B_2,B_1)\oplus \End_{\C_{\chi}}(B_1)\oplus \End_{\C_{\chi}}(B_2).$$ The non-degeneracy of the trace pairing on $\End_{\C_{\chi}}(A)$ implies that if there is $0\neq f:B_1\to B_2$ then there must be a non-zero morphism $g:B_2\to B_1$ such that $\Tr(f\circ g)\neq 0$. In particular, $f\circ g\neq 0$. Since $\End_{\C_{\chi}}(B_2)=K$ this implies that $f\circ g$ is invertible. By rescaling we can assume that $f\circ g=\Id_{B_2}$. By a symmetric argument we can show that $g\circ f = \Id_{B_1}$ and we are done.
\end{proof}
\begin{remark}\label{rem:reftoexample}
Unlike $\C_{univ}$, the category $\C_{\chi}$ does not satisfy a universal property for structures $(A,(y_i))$ in some $K$-good category $\D$ which afford $\chi$ as character of invariants. The reason for this is that it is possible that $(A,(y_i))$ will afford the character $\chi$ but the resulting functor $F_A:\C_{univ}\to \D$ will not vanish on all morphisms in $\Nn_{\chi}$. We will see an example of this phenomenon in Subsection \ref{subsec:example-idempotent}
\end{remark}
The construction of the category $\C_{\chi}$ enables us to give an alternative definition for good characters.
\begin{lemma}
Let $\chi:\winf\to K$ be a character that satisfies the first condition of Definition \ref{def:good}. Then $\chi$ satisfies the second condition of Definition \ref{def:good}, and is therefore a good character, if and only if the following condition holds:
for every $T\in \C_{univ}$ it holds that $\sum_n\chi(\Tr(T^n))X^n\in K[[X]]$ is a good rational function 
\end{lemma}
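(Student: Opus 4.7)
The plan is to handle the two implications separately, using $\C_{\chi}$ on one side and the explicit shape of good rational functions on the other. Throughout, by $T\in\C_{univ}$ we mean an endomorphism $T:B\to B$ for some object $B$ of $\C_{univ}$, as in the second condition of Definition \ref{def:good}.

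For the forward direction, assume $\chi$ is good. Then by Proposition \ref{prop:2halfmainthm1} the category $\C_{\chi}$ is a semisimple $K$-good category, and we have the symmetric monoidal functor $F_{\chi}:\C_{univ}\to \C_{\chi}$ whose restriction to $\End_{\C_{univ}}(\one)=\winf$ is exactly $\chi$. Since symmetric monoidal functors commute with trace (Subsection \ref{subsec:algstr}), we get
\[
\chi(\Tr(T^n))\;=\;\Tr(F_{\chi}(T)^n)
\]
for every $n\geq 0$. Applying Corollary \ref{cor:endshape} to the endomorphism $F_{\chi}(T)$ in the $K$-good category $\C_{\chi}$ then produces a good rational function, which is precisely $\sum_n \chi(\Tr(T^n))X^n$.

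For the converse, assume the rational-function condition. Suppose $T:B\to B$ satisfies $T^r\in \Nn_{\chi}$ for some $r>0$. For every $n\geq r$ we may factor $T^n = T^r\circ T^{n-r}$, and Lemma \ref{lem:pairideal} immediately gives $\chi(\Tr(T^n))=0$. Hence the power series $\sum_n \chi(\Tr(T^n))X^n$ has only finitely many nonzero coefficients; it is a polynomial.

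The main step, and the only genuine subtlety, is to show that this forces $\chi(\Tr(T^n))=0$ for all $n\geq 1$. Here I would invoke the description recorded at the start of the proof of Corollary \ref{cor:endshape}: every good rational function is a $K$-linear combination $\sum_i c_i\,\frac{1}{1-\la_i X}$ with distinct $\la_i\in K$, whose Taylor coefficients form the exponential polynomial $\sum_i c_i\la_i^n$. If such a sequence vanishes for all sufficiently large $n$, a standard Vandermonde argument applied to the distinct \emph{nonzero} $\la_i$ forces $c_i=0$ whenever $\la_i\neq 0$; only the $\la=0$ term can survive, and it contributes a constant. Thus the polynomial $\sum_n \chi(\Tr(T^n))X^n$ is in fact the constant $\chi(\Tr(T^0)) = \chi(\dim(B))$, so $\chi(\Tr(T^n))=0$ for every $n\geq 1$, and in particular $\chi(\Tr(T))=0$. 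This verifies the second condition of Definition \ref{def:good}, completing the proof.
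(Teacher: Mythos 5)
Your proof is correct and follows essentially the same route as the paper: the forward direction via $F_{\chi}$, compatibility of trace with symmetric monoidal functors, and Corollary \ref{cor:endshape}; the converse by noting that $\chi$-negligibility of $T^r$ forces the series to be a polynomial. The only difference is cosmetic: where you invoke partial fractions and a Vandermonde argument, the paper simply observes that a polynomial of the form $P(X)/Q(X)$ with $\deg(P)\leq\deg(Q)$ must be constant, which gives $\chi(\Tr(T))=0$ at once.
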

\begin{proof}
If $\chi$ is good then $\C_{\chi}$ is a $K$-good category, and by Corollary \ref{cor:endshape} $\sum_n\chi(\Tr(T^n))X^n$ is a good rational function. In the other direction, if $\chi$ satisfies the condition in the lemma, then in particular if $T^r$ is $\chi$-negligible for some $r>0$ it holds that $\sum_n \chi(\Tr(T^n))X^n$ is a polynomial. But the only polynomials of the form $\frac{P(X)}{Q(X)}$ with $\deg(P)\leq \deg(Q)$ are the constant polynomials. In particular, we get that $\chi(\Tr(T))=0$, and $\chi$ is therefore a good character.
\end{proof}
\begin{remark}
The above lemma gives an alternative definition of good characters. The advantage of this definition is that it refers to all morphisms in $\C_{univ}$, and not only the $\chi$-negligible ones. 
\end{remark}

\section{Structures in $\Vec_K$ and a proof of Theorems \ref{thm:main2} and \ref{thm:mainallG}}\label{sec:vecstr}
In this section we will describe the categories $\C_{\chi}$ explicitly, in case $\chi$ is a character arising from a structure in $\Vec_K$. Recall from Subsection \ref{subsec:GIT} that structures in $\Vec_K$ of dimension $d$ are in one-to-one correspondence with $\GL_d(K)$-orbits in the variety $U_d$. If $((Y,(y_i))$ and $(Z,(z_i))$ are two structures of dimension $d$, we will say that $(Y,(y_i))$ \emph{specializes} to $(Z,(z_i))$ if the $\GL_d(K)$-orbit of the isomorphism class of $(Z,(z_i))$ is contained in the closure of the $\GL_d(K)$-orbit of the isomorphism class of $(Y,(y_i))$. If this happens, the characters of invariants of $(Y,(y_i))$ and of $(Z,(z_i))$ are equal. We claim the following:
\begin{lemma}\label{lem:closure}
Assume that $(Y,(y_i))$ and $(Z,(z_i))$ are two structures of dimension $d$, and that $(Y,(y_i))$ specializes to $(Z,(z_i))$. Denote by $F_Y,F_Z:\C_{univ}\to \Vec_K$ the functors constructed in Section \ref{sec:unicat}. If $f:A\to B$ is a morphism in $\C_{univ}$ and $F_Y(f)=0$, then $F_Z(f)=0$ as well.
\end{lemma}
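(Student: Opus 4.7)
The plan is to show that the condition $F_W(f)=0$, viewed as a condition on the structure $(W,(w_i))$, cuts out a Zariski closed, $\GL_d$-stable subset of $U_d$; since it contains the orbit of $(Y,(y_i))$, it must contain the closure of that orbit, and in particular the point corresponding to $(Z,(z_i))$.

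First I would reduce to the case where $A = W^{a,b}$ and $B = W^{c,d'}$ are objects of $\C_0$, since a general morphism in $\C_{univ}$ decomposes into a matrix of such morphisms and vanishing is checked componentwise. Under this reduction, $\Hom_{\C_{univ}}(A,B) = Con^{c+b,d'+a}$, so $f$ is a formal linear combination of diagrams built from the structure tensors, evaluations, coevaluations, and symmetries. Next, fix a $d$-dimensional vector space $V = K^d$ and parameterize structures of dimension $d$ by their structure tensors $(w_i) \in U_d = \prod_i V^{p_i,q_i}$. Identifying the underlying spaces of both $Y$ and $Z$ with $V$, let $p_Y, p_Z \in U_d$ denote the corresponding points; the hypothesis says $p_Z$ lies in the closure of $\GL_d\cdot p_Y$.

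For fixed $A, B$ as above, the vector spaces $F_{(V,(w_i))}(A) = V^{\otimes a} \otimes (V^*)^{\otimes b}$ and similarly for $B$ do not depend on the structure tensors. I would now define the evaluation map
\[
\Phi_f : U_d \longrightarrow \Hom_K\bigl(V^{\otimes a}\otimes(V^*)^{\otimes b},\ V^{\otimes c}\otimes(V^*)^{\otimes d'}\bigr), \qquad (w_i)\mapsto F_{(V,(w_i))}(f).
\]
The key observation, following directly from the diagrammatic description of $\Re^{p,q}$, is that $\Phi_f$ is a polynomial (regular) map, because each diagram realizes to a multilinear contraction of the structure tensors. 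Hence its zero locus $V_f := \Phi_f^{-1}(0) \subseteq U_d$ is Zariski closed.

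It remains to show $p_Z \in V_f$. By naturality of the realization maps with respect to linear isomorphisms of structures, $\Phi_f$ is $\GL_d$-equivariant with respect to the canonical $\GL_d$-action on the target; alternatively and more directly, isomorphic structures make $F(f)$ vanish simultaneously. Hence $\Phi_f(p_Y) = F_Y(f) = 0$ forces the whole orbit $\GL_d \cdot p_Y$ to lie in $V_f$. Since $V_f$ is closed, the orbit closure lies in $V_f$ as well, so $p_Z \in V_f$, i.e.\ $F_Z(f) = 0$. The only subtlety worth checking carefully is the polynomiality of $\Phi_f$, but this is an immediate unwinding of the definitions of $\Re^{p,q}$ and $F_{(V,(w_i))}$ given in Section 3; no serious obstacle arises.
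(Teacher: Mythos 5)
Your proposal is correct and follows essentially the same route as the paper: both arguments identify $U_d$ with the affine space of structure constants, observe that the vanishing of $F_W(f)$ is a polynomial (hence Zariski-closed, $\GL_d$-stable) condition on these constants, and conclude by passing from the orbit of $(Y,(y_i))$ to its closure, which contains the orbit of $(Z,(z_i))$. Your write-up merely makes explicit the reduction to objects $W^{a,b}$ and the equivariance of the evaluation map, which the paper leaves implicit.
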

\begin{proof}
By fixing a basis for $Y$ and for $Z$ we can assume without loss of generality that $Y=Z=K^d$ as vector spaces. In this case we can think of the variety $U_d$ as the affine space made by the structure constants of the different structure tensors (See Subsection 2.3. in \cite{meirUIR}). The vanishing of $F_Y(f)$ then boils down to the vanishing of a set of polynomial functions in the structure constants. Since these polynomials vanish on all point in the orbit of $(Y,(y_i))$, they must also vanish on the orbit of $(Z,(z_i))$ by continuity, so $F_Z(f)=0$ as well.
\end{proof}

Let now $(Y,(y_i))$ be an algebraic structure of dimension $d$ in $\Vec_K$. Let $\chi=\chi_{(Y,(y_i))}$. The isomorphism class of $(Y,(y_i))$ gives a $\GL_d(K)$-orbit in $U_d$.
Let $\Ow$ be the unique closed orbit in the closure of this orbit (Uniqueness follows from the results in Subsection \ref{subsec:GIT}). We write $(Z,(z_i))$ for a representative of the orbit $\Ow$. 
We claim the following:
\begin{proposition}\label{prop:mainvecstr}
There is a unique symmetric fiber functor $F:\C_{\chi}\to \Vec_K$. The image of the tautological structure in $\C_{\chi}$ under $F$ is isomorphic to $(Z,(z_i))$, and $\C_{\chi}\cong \Rep(\Aut(Z,(z_i)))$. 
\end{proposition}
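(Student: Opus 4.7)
The plan is to construct the fiber functor $F$ from the model of $\C_{univ}$ coming from the closed-orbit structure $(Z,(z_i))$, show that it descends through $\Nn_{\chi}$, and then apply Tannakian reconstruction to identify the resulting category.

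First, since $(Y,(y_i))$ specializes to $(Z,(z_i))$ and invariants are constant on orbit closures, $\chi = \chi_{(Z,(z_i))}$. By Theorem \ref{thm:main1} the category $\C_{\chi}$ is a semisimple $K$-good category. Write $G = \Aut(Z,(z_i))$. Since $(Z,(z_i))$ has closed $\GL_d(K)$-orbit, Matsushima's theorem implies $G$ is reductive, so $\Rep(G)$ is semisimple and the trace pairing on every endomorphism algebra in $\Rep(G)$ is non-degenerate (see Subsection \ref{subsec:findimalg}).

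For the construction of $F$, I would start with $F_Z : \C_{univ} \to \Vec_K$ from Proposition \ref{prop:universalproperty}. Because the tensors $(z_i)$ are $G$-invariant and all constructible morphisms are assembled by $G$-equivariant operations (tensor, symmetry, evaluation, coevaluation), $F_Z$ factors through $\Rep(G)$. To descend $F_Z$ to $\wt{\C}_{\chi}$ one needs $\Nn_{\chi} \subseteq \Ker(F_Z)$. The crucial ingredient is the invariant-theoretic \emph{spanning fact}
\[
\Re_Z^{p,q}(Con^{p,q}) = \Hom_G(Z^{\otimes q}, Z^{\otimes p}),
\]
a ``first fundamental theorem'' for the reductive stabilizer $G$ of the tensors $(z_i)$. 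Granted this, for $f \in Con^{p,q}$ and $g \in Con^{q,p}$ we have $\pair^{p,q}_{\chi}(f,g) = \chi(\Tr(\Re_Z(f)\Re_Z(g))) = \Tr_{\Rep(G)}(\Re_Z(f)\Re_Z(g))$. Hence if $f \in N^{p,q}_{\chi}$ then $\Re_Z(f)$ is trace-orthogonal to all of $\Hom_G(Z^{\otimes p}, Z^{\otimes q})$, and non-degeneracy of the trace pairing in $\Rep(G)$ forces $\Re_Z(f) = 0$. Thus $F_Z$ descends to $\wt{\C}_{\chi}$ and, by the universal property of the Karoubian envelope, extends uniquely to a symmetric monoidal $K$-linear exact faithful functor $F : \C_{\chi} \to \Vec_K$.

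With $F$ in hand, Tannaka--Deligne reconstruction applied to the semisimple $K$-good category $\C_{\chi}$ yields $\C_{\chi} \simeq \Rep(\Aut^{\otimes}(F))$. To identify $\Aut^{\otimes}(F)$ with $G$: a tensor automorphism $\eta$ of $F$ is determined by $\eta_W \in \GL(F(W)) = \GL(Z)$, and compatibility with $F(x_i) = z_i$ forces $\eta_W$ to fix each $z_i$, i.e.\ $\eta_W \in G$; conversely every $g \in G$ gives such a tensor automorphism. Hence $\C_{\chi} \simeq \Rep(G)$. For uniqueness, any symmetric fiber functor $F' : \C_{\chi} \to \Vec_K$ sends $(W,(x_i))$ to a structure $(Z',(z'_i))$ of dimension $\chi(\dim W) = d$ affording $\chi$, whose stabilizer is reductive (since $\C_{\chi} \simeq \Rep(\Aut(Z',(z'_i)))$ is semisimple) and hence has closed orbit. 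By uniqueness of closed orbits with prescribed invariants (Subsection \ref{subsec:GIT}), $(Z',(z'_i)) \cong (Z,(z_i))$ and therefore $F' \cong F$.

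The main obstacle is the spanning fact $\Re_Z^{p,q}(Con^{p,q}) = \Hom_G(Z^{\otimes q}, Z^{\otimes p})$: although classical in spirit, it requires that every $G$-equivariant map between tensor powers of $Z$ can be constructed from the $(z_i)$ by contractions, tensor products, and symmetries, which demands genuine invariant-theoretic input about reductive groups stabilizing given tensors. Once this is in place, the descent, reconstruction and identification steps are formal.
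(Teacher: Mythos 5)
Your argument has a genuine gap, and it sits exactly where you flag it: the ``spanning fact'' $\Re_Z^{p,q}(Con^{p,q})=\Hom_G(Z^{\ot q},Z^{\ot p})$ is not available at this point of the paper and cannot simply be invoked. It is essentially Theorem \ref{thm:main2}, and in the paper that theorem (together with Corollary \ref{cor:closedorbit} and the corollary following it) is \emph{deduced from} Proposition \ref{prop:mainvecstr}, not used to prove it; so as written your proof is circular relative to the logical structure being built, and no independent invariant-theoretic proof of a first fundamental theorem for an arbitrary reductive stabilizer of a family of tensors is supplied. The paper's actual route avoids this ingredient entirely: existence \emph{and} uniqueness of the symmetric fiber functor come from Deligne's intrinsic criterion (\cite{Deligne1}, \cite{Deligne2}), verified by showing that $\Alt_r^{W^{a,b}}$ is $\chi$-negligible because $F_Y(\bigwedge^r W^{a,b})=\bigwedge^r Y^{a,b}=0$ for $r>\dim_K Y^{a,b}$ together with Lemma \ref{lem:cofinite}; then, writing $(D,(t_i))$ for the image of the tautological structure under the resulting fiber functor, the fact that $F_Z$ kills $\Nn_{\chi}$ is obtained not from a spanning/non-degeneracy argument but from the specialization Lemma \ref{lem:closure} (negligible morphisms vanish under $F_D$, and $(D,(t_i))$ specializes to $(Z,(z_i))$), after which uniqueness of the fiber functor forces $(D,(t_i))\cong (Z,(z_i))$ and the equivalence $\C_{\chi}\cong\Rep(\Aut(Z,(z_i)))$ follows from the descent results of \cite{meirDescent}.

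A second, independent error occurs in your uniqueness step: you argue that $(Z',(z'_i))$ has closed orbit because its stabilizer is reductive, but reductivity of the stabilizer does not imply closedness of the orbit (for the one-dimensional torus acting on $\A^2$ by $t\cdot(x,y)=(tx,t^{-1}y)$, the orbit of $(1,0)$ has trivial, hence reductive, stabilizer and is not closed); Matsushima's criterion only gives the implication in the other direction. The correct criterion relating closed orbits to non-degeneracy of the trace pairing on constructible elements is Corollary \ref{cor:closedorbit}, which again lies downstream of the proposition you are proving. In the paper uniqueness is simply part of Deligne's theory of Tannakian categories over an algebraically closed field, so none of this is needed.
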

\begin{proof}
To prove the existence and uniqueness of the symmetric fiber functor $F$ we will use the theory of Deligne on Tannakian categories (see Th{\'e}or{\`e}me 7.1. in \cite{Deligne2} and also Proposition 0.5. in \cite{Deligne1} for the more general case of an $\text{sVec}_K$ valued functor). Applied to the present situation, the theorem of Deligne tells us that there is a (unique) symmetric monoidal functor $\C_{\chi}\to \Vec_K$ if and only if for every $B\in \C_{\chi}$ there is an integer $r$ such that $\bigwedge^r B=0$. This condition is equivalent to the condition that the constructible idempotent $\Alt^B_r:B^{\ot r}\to B^{\ot r}$ defined by $$\Alt^B_r = \frac{1}{r!}\sum_{\sigma\in S_r}(-1)^{\sigma}L^{(r)}_{\sigma}$$ vanishes in $\C_{\chi}$. It will be enough to prove this statement for objects of the form $W^{a,b}$ because the collection of objects which satisfies this condition is closed under taking direct sums and direct summands. So we need to prove that for every $a$ and every $b$ there is an $r$ such that $\Alt^{W^{a,b}}_r\in \Nn_{\chi}((W^{a,b})^{\ot r},(W^{a,b})^{\ot r})$.  

Consider now the functor $F_Y:\C_{univ}\to \Vec_K$. We do know that $F_Y(W^{a,b}) = Y^{a,b}$ is a finite dimensional vector space. This means that for $r= \dim_K Y^{a,b}+1$ it holds that $$0=\bigwedge^r Y^{a,b} = \bigwedge^r F_Y(W^{a,b}) = F_Y(\bigwedge^r W^{a,b}),$$ where we use the fact that $F_Y$ is monoidal and symmetric, so it commutes with taking tensor products and taking exterior powers. 
But this means that $F_Y(\Alt^{W^{a,b}}_r)=0$. We have seen in \ref{lem:cofinite} that this implies that $\Alt^{W^{a,b}}_r\in \Nn_{\chi}((W^{a,b})^{\ot r},(W^{a,b})^{\ot r})$, so the first statement is proved.

We thus have a symmetric fiber functor $F:\C_{\chi}\to \Vec_K$. We write $F(W) = D$ and $F(x_i) = t_i$. We get a structure $(D,(t_i))$ in $\Vec_K$. The character of invariants of $(D,(t_i))$ is still $\chi$, and $(D,(t_i))$ specializes to $(Z,(z_i))$. Following Section 6 of \cite{meirDescent} we get an equivalence between $\C_{\chi}$ and $\Rep(\Aut(D,(t_i)))$ (notice that the framework here is a bit different than that of \cite{meirDescent}, because we work here over an algebraically closed field. This is why we get here a proper equivalence of the categories, and not just up to a form). We need to prove that $(D,(t_i))$ is isomorphic to $(Z,(z_i))$. 

For this, let $F_Z:\C_{univ}\to \Vec_K$ be the functor sending the tautological structure in $\C_{univ}$ to $(Z,(z_i))$. We will show that this functor splits via $\C_{\chi}$. This will imply that we get a fiber functor $\C_{\chi}\to \Vec_K$ sending the tautological structure in $\C_{\chi}$ to $(Z,(z_i))$. By the uniqueness of the fiber functor this will prove that $(Z,(z_i))\cong (D,(t_i))$. 

To prove the above statement, it will be enough to show that for every $\chi$-negligible morphism $f$ we have $F_Z(f)=0$. But since $(D,(t_i))$ spcializes to $(Z,(z_i))$ and $F_D(f)=0$, this follows from Lemma \ref{lem:closure}.
\end{proof}
Write now $X^{(p,q)}\subseteq Y^{p,q}$ for the image of $\Hom_{\C_{univ}}(W^{\ot q},W^{\ot p})$ under the functor $F_Y$. In other words: $X^{(p,q)}$ contains all the linear transformations one can form from the structure tensors of $(Y,(y_i))$ using linear algebra operations. We have the following:
\begin{corollary}\label{cor:closedorbit}
The restriction of the natural pairing $Y^{p,q}\ot Y^{q,p}\to K$ to $X^{(p,q)}\ot X^{(q,p)}\to K$ is non-degenerate if and only if the orbit of $(Y,(y_i))$ is closed in $U_d$. 
\end{corollary}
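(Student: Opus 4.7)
The plan is to reformulate the non-degeneracy of the pairing in terms of the kernel of the functor $F_Y$ and the tensor ideal $\Nn_\chi$, and then apply the uniqueness of the fiber functor from Proposition \ref{prop:mainvecstr}. The key observation is that the ``natural pairing'' on $Y^{p,q}\otimes Y^{q,p}$ is just the trace pairing $v\otimes w\mapsto \Tr(v\circ w)$, and since $F_Y$ commutes with traces, for $f\in \Hom_{\C_{univ}}(W^{\otimes q},W^{\otimes p})$ and $g\in \Hom_{\C_{univ}}(W^{\otimes p},W^{\otimes q})$ the pairing of $F_Y(f)$ and $F_Y(g)$ equals $\chi(\Tr(f\circ g))=\pair^{p,q}_\chi(f,g)$. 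Hence $F_Y(f)$ lies in the left radical of the restricted pairing precisely when $f\in N^{p,q}_\chi$. Consequently, non-degeneracy of the pairing on $X^{(p,q)}\otimes X^{(q,p)}$ for all $p,q$ is equivalent to the inclusion $\Nn_\chi\subseteq \Ker(F_Y)$ on every hom-space of $\C_{univ}$. Lemma \ref{lem:cofinite} always supplies the reverse inclusion $\Ker(F_Y)\subseteq \Nn_\chi$, so the corollary reduces to the claim that $\Ker(F_Y)=\Nn_\chi$ if and only if the orbit of $(Y,(y_i))$ is closed.

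For the forward direction, suppose the orbit is closed. Since the closed orbit in the closure is unique, $(Y,(y_i))\cong (Z,(z_i))$, so $F_Y\cong F_Z$. Proposition \ref{prop:mainvecstr} provides a factorization $F_Z:\C_{univ}\xrightarrow{F_\chi}\C_\chi\xrightarrow{F}\Vec_K$, where $F_\chi$ has kernel exactly $\Nn_\chi$ by construction and $F$ is the unique fiber functor on the semisimple $K$-good category $\C_\chi$, hence faithful. Composing gives $\Ker(F_Y)=\Nn_\chi$ and all the pairings are non-degenerate.

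For the converse, assume all the pairings are non-degenerate, so $\Ker(F_Y)=\Nn_\chi$. Then $F_Y$ descends to a faithful $K$-linear symmetric monoidal functor $\wt{\C}_\chi\to\Vec_K$, which extends uniquely to a faithful symmetric fiber functor $G:\C_\chi\to\Vec_K$ on the Karoubi envelope. By the uniqueness of the fiber functor in Proposition \ref{prop:mainvecstr}, $G$ is isomorphic to $F$. In particular the image of the tautological structure of $\C_\chi$ under $G$, which by construction is $(Y,(y_i))$, must be isomorphic to its image under $F$, which is $(Z,(z_i))$. Therefore $(Y,(y_i))\cong(Z,(z_i))$ and the orbit of $(Y,(y_i))$ coincides with the closed orbit of $(Z,(z_i))$.

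The main obstacle is the last step: turning an abstract isomorphism of fiber functors into an isomorphism of the underlying structures. This is exactly what the uniqueness statement of Proposition \ref{prop:mainvecstr}, obtained via Tannakian reconstruction, is designed to provide, since it pins down both the object $F(W)$ and the structure tensors $F(x_i)$ up to simultaneous isomorphism.
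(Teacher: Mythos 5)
Your proof is correct and follows essentially the same route as the paper: the forward direction uses the factorization of $F_Y$ through $\C_{\chi}$ (where the trace pairing is non-degenerate by construction) via Proposition \ref{prop:mainvecstr}, and the converse uses that non-degeneracy forces $F_Y$ to kill $\Nn_{\chi}$ and hence descend to a fiber functor on $\C_{\chi}$, whose uniqueness identifies $(Y,(y_i))$ with $(Z,(z_i))$. Your reformulation as $\Ker(F_Y)=\Nn_{\chi}$ just makes explicit what the paper leaves implicit.
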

\begin{proof}
We have already seen that if the orbit of $(Y,(y_i))$ is closed then we have an equivalence $\C_{\chi}\cong \Rep(Aut(Y,(y_i)))$. By the way $\C_{\chi}$ is constructed we see that the pairing $\Hom_{\C_{\chi}}(W^{\ot q},W^{\ot p})\ot \Hom_{\C_{\chi}}(W^{\ot p},W^{\ot q})\to K$ given by $T_1\ot T_2\mapsto \Tr(T_1T_2)$ is non-degenerate. Since we have a fiber functor $F:\C_{\chi}\to \Vec_K$ which sends $(W,(x_i))$ to $(Y,(y_i))$ the result follows.

In the other direction, assume that the pairing $X^{(p,q)}\ot X^{(q,p)}\to K$ is non-degenerate. This means that if $f$ is a $\chi$-negligible morphism in $\C_{univ}$ then $F_Y(f)=0$. This implies that $F_Y$ splits via $\C_{\chi}$, and we have already seen that this implies that $(Y,(y_i))\cong (Z,(z_i))$, where $(Z,(z_i))$ is the closed orbit which $(Y,(y_i))$ specializes to. 
\end{proof}

A special case of the next corollary was used in \cite{meirHopfI} to prove that every finite dimensional semisimple Hopf algebra admits at most finitely many Hopf orders over any number ring:
\begin{corollary}
Assume that $(Y,(y_i))$ has a closed orbit. Let $X^{(p,q)}\subseteq Y^{p,q}$ be the subspace of constructible elements. Then $X^{(p,q)} = (Y^{p,q})^G$ where $G = \Aut(Y,(y_i))$. 
\end{corollary}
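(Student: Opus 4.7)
The plan is to derive this as a direct consequence of Proposition~\ref{prop:mainvecstr}. Since the orbit of $(Y,(y_i))$ is closed, $(Y,(y_i))$ is the unique structure with closed orbit in the closure of its own orbit, so Proposition~\ref{prop:mainvecstr} produces an equivalence $\C_{\chi}\cong \Rep(G)$ with $G=\Aut(Y,(y_i))$, together with a symmetric fiber functor $F:\C_{\chi}\to \Vec_K$ whose value on the tautological structure is (isomorphic to) $(Y,(y_i))$ itself. Under this identification, the hom-spaces match:
$$\Hom_{\C_{\chi}}(W^{\ot q},W^{\ot p})\;\cong\;\Hom_G(Y^{\ot q},Y^{\ot p})\;\cong\;(Y^{p,q})^G.$$

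First I would note that the functor $F_Y:\C_{univ}\to \Vec_K$ factors as
$$\C_{univ}\xrightarrow{F_{\chi}}\wt{\C}_{\chi}\hookrightarrow \C_{\chi}\xrightarrow{F}\Vec_K,$$
as established in the proof of Proposition~\ref{prop:mainvecstr} (it is precisely this factorization that was used to identify $(D,(t_i))$ with $(Z,(z_i))=(Y,(y_i))$). Consequently the image of the realization map $\Re^{p,q}_Y=F_Y:\Hom_{\C_{univ}}(W^{\ot q},W^{\ot p})\to Y^{p,q}$, which by definition is $X^{(p,q)}$, coincides with the image of
$$\Hom_{\C_{\chi}}(W^{\ot q},W^{\ot p})\xrightarrow{F}\Hom_K(Y^{\ot q},Y^{\ot p})\cong Y^{p,q}.$$

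Next I would use that $F_{\chi}$ is surjective on hom-spaces by construction (it is the quotient by $\Nn_{\chi}$ followed by inclusion into the Karoubian envelope), and that $F$ is fully faithful, since it arises from a Tannakian equivalence $\C_{\chi}\cong\Rep(G)$. Hence the above image equals the full hom-space $\Hom_G(Y^{\ot q},Y^{\ot p})$, which under the standard adjunction is precisely $(Y^{p,q})^G$.

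The only mildly subtle point, and the one I would want to check carefully, is the identification of the image of $F$ on hom-spaces with the $G$-invariants: this is the routine Tannakian statement that, having fixed the fiber functor to $(Y,(y_i))$, a morphism $T:W^{\ot q}\to W^{\ot p}$ in $\C_{\chi}\simeq\Rep(G)$ is sent to a linear map $Y^{\ot q}\to Y^{\ot p}$ that is $G$-equivariant, and every such $G$-equivariant map arises uniquely this way. Combining the surjectivity of $F_\chi$ with the bijectivity of $F$ on hom-spaces gives $X^{(p,q)}=(Y^{p,q})^G$, which is the desired equality (and also recovers Theorem~\ref{thm:main2}).
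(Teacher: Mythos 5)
Your proposal is correct and takes essentially the same route as the paper, whose proof is just the one-line observation that a closed orbit gives (via Proposition \ref{prop:mainvecstr}) an equivalence $\C_{\chi}\cong\Rep(\Aut(Y,(y_i)))$ with fiber functor sending the tautological structure to $(Y,(y_i))$, so the constructible elements, which span the hom-spaces of $\C_{\chi}$, map exactly onto $(Y^{p,q})^G$; you simply spell out the factorization of $F_Y$ through $\C_{\chi}$ and the identification of hom-spaces explicitly. One wording caveat: the fiber functor $F:\C_{\chi}\to\Vec_K$ is faithful but not full, so "$F$ is fully faithful" is not literally right; what you actually use (and state correctly in your final paragraph) is that the induced functor $\C_{\chi}\to\Rep(G)$ is an equivalence, so the image of a hom-space under $F$ is precisely the space of $G$-equivariant maps.
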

\begin{proof}
This follows immediately because in this case $\C_{\chi}\cong \Rep(G)$ and the hom-spaces in $\C_{\chi}$ are spanned by the constructible elements.
\end{proof}
\begin{remark}
A simple example of a non-closed orbit is given as follows: Let $Y=K^2$ and consider a single tensor $T$ of type $(1,1)$ (i.e. an endomorphism). The orbit of the nilpotent linear transformation $e_{12}$ is not closed, and contains the zero endomorphism in its closure. The algebra of invariants $\winf$ is a polynomial algebra on $\Tr(T^n)$ $n=0,1,2,\ldots$, and the character of invariants of $(K^2,e_{12})$ and $(K^2,0)$ is given by $\chi(\Tr(T^n)) = 0$ for $n>0$ and $\chi(\dim) = 2$. Even though $e_{12}\neq 0$, $T$ will be a negligible morphism and will therefore vanish in $\C_{\chi}$. The category that we will get is $\Rep(\GL_2)$, as $\GL_2$ is the automorphism group of $(K^2,0)$. 
\end{remark}

\subsection{Proof of Theorem \ref{thm:mainallG}}
Let now $G$ be any reductive affine algebraic group. 
We show that $\Rep(G)\cong \cchi$ for a suitable type of structure and a suitable character $\chi$. 
This would have been fairly easy to prove if we allowed infinitely many structure tensors. We will show here that it is also possible when considering only finitely many structure tensors.
Since $G$ is an affine algebraic group, we know that $G$ is a subgroup of $\GL(V)$ for some finite dimensional vector space $V$. We begin with the following lemma:
\begin{lemma} Assume that $G\subseteq H \subseteq \GL(V)$ are algebraic groups, and that $G$ is also reductive. If $\Hom_H(\one,V^{p,q})= \Hom_G(\one,V^{p,q})$ for every $p,q\in\N$, then $G=H$. 
\end{lemma}
\begin{proof}
Consider the category $\Rep(H)$. Inside this category, consider an object of the form $A=A_{((a_i,b_i))}=\bigoplus_i V^{a_i,b_i}$. By assumption, it holds that $\End_H(A)\to \End_G(A)$ is an isomorphism. Since $G$ is reductive, it holds that $\End_G(A)=\End_H(A)$ is a semisimple algebra, and thus $A$ decomposes in $\Rep(H)$ into a direct sum of objects $B$ with $\End_H(B)=K$. Moreover, for any two such objects $B$ and $B'$ it holds that $\Hom_H(B,B')$ is either zero or one dimensional, and if $B$ is a direct summand of $A$ and $B'$ a direct summand of $A'$ then $B\ot B'$ is a direct summand of $A\ot A'$ which is again a direct sum of objects of the form $V^{a,b}$. 

Consider now the full subcategory $\C\subseteq \Rep(H)$ whose objects are direct sums of direct summands of objects of the form $A_{((a_i,b_i))}$ for some $((a_i,b_i))$. The above argument shows that this is a rigid tensor subcategory of $\Rep(H)$ that contains $V$ and $V^*$. Since $V$ and $V^*$ tensor-generate $\Rep(H)$, it holds that $\C= \Rep(H)$. 
It follows that the restriction functor $\Rep(H)\to \Rep(G)$ is an isomorphism on all hom-spaces, and that it is surjective on objects, since $\Rep(G)$ is semisimple and every object there is isomorphic to a direct summand of $\bigoplus V^{a_i,b_i}$ for some $((a_i,b_i))$. The restriction functor is therefore an equivalence of categories. By Tannaka reconstruction it follows that $H=G$.
\end{proof}

Next, we claim the following:
\begin{lemma} There is a finite collection $x_i\in V^{p_i,q_i}$, $i=1,\ldots, c$ of tensors such that $G=\Stab_{\GL(V)}((x_i))$. 
\end{lemma}
\begin{proof}
Write $T$ for the set of all tensors in $V^{p,q}$ for some $p,q$ that are fixed by $G$. Write $Q$ for the set of subgroups of $\GL(V)$ that arise as stabilizers of finite subsets of $T$. 
Since $\GL(V)$ is a Noetherian topological space, the set $Q$ has a minimal element $H$. 
Since taking unions of finite sets corresponds to taking intersections in $Q$, we see that this minimal element of $Q$ is in fact unique. 
We claim that $H=G$. This follows from the fact that by minimality, $H$ is contained in the stabilizer of any finite subset of $T$. This implies that $H$ fixes all the elements in $T$. By the previous lemma, this implies that $H=G$.
\end{proof}

We fix now a tuple $(x_1,\ldots, x_c)\in U(V) = \bigoplus V^{p_i,q_i}$ such that $\Stab_{\GL(V)}((x_1,\ldots, x_c)) = G$. 
We are almost in position to construct a $\chi$ such that $\C_{\chi}=\Rep(G)$. The only problem is that it might happen that the $\GL(V)$ orbit of $(x_1,\ldots, x_c)$ inside $\bigoplus V^{p_i,q_i}$ is not closed. We solve this issue using localisation:
\begin{lemma}
Let $(x_1,\ldots, x_c)$ be a tuple as above. 
Then there are finitely many tensors $z_i\in V^{a_i,b_i}$, $i=1,\ldots, d$ such that the $\GL(V)$-orbit of $(x_1,\ldots, x_c,z_1,\ldots, z_d)$ in $U(V)\oplus \bigoplus_i V^{a_i,b_i}$ is closed, and such that $\Stab_{\GL(V)}((x_1,\ldots, x_c,z_1,\ldots, z_d))=G$. 
\end{lemma}
\begin{proof}
Recall first that if $\GL(V)$ acts on an affine space $U$, and $u\in U$ is any point, then $\dim(\Ow_u) + \dim(\Stab_{\GL(V)}(u)) = \dim(\GL(V))$, where $\Ow_u$ is the orbit of $u$. In particular, if $\Ow_1$ and $\Ow_2$ are two orbits such that $\Ow_2\subseteq \ol{\Ow_1}$, then $\dim(\Ow_2)<\dim(\Ow_1)$ and as a result $\dim(\Stab_{\GL(V)}(u_1))<\dim(\Stab_{\GL(V)}(u_2))$ where $u_1\in \Ow_1$ and $u_2\in \Ow_2$. 

For our concrete case, we can view $p=(x_1,\ldots, x_c)$ as a point in the affine space $U(V)$ upon which $\GL(V)$ acts. If the orbit $\Ow$ of this point is closed, we are done. Otherwise, consider the unique closed $\GL(V)$-orbit in the closure of the $\GL(V)$-orbit of $p$. Let $p'=(x'_1,\ldots, x'_c)$ be a point in this closed orbit. Write $L=\Stab_{\GL(V)}(p')$. Then it holds that $\dim(G)<\dim(L)$. If $\chi$ is the character of the structure defined by $(x_1,\ldots, x_c)$, then we know that there is a constructible morphism $f\in Con^{b,a}$ for some $a,b\in\N$ such that $f$ is $\chi$-negligible but $0\neq \Re^{b,a}(f)\in V^{b,a}$. 
Denote by $\Re'^{b,a}:Con^{b,a}\to V^{b,a}$ the realization map with respect to the tensors $(x'_1,\ldots, x'_c)$. The fact that $f$ is $\chi$-negligible implies that $\Re'^{b,a}(f)=0$. Since $G$ is reductive, there is an element $z\in (V^{a,b})^G$ such that $\langle z,\Re^{b,a}(f)\rangle=1$. Consider now the point $q=(x_1,\ldots, x_c,z)\in U(V)\oplus V^{a,b}$. We have a $\GL(V)$-action on this space as well, and projection gives a $\GL(V)$-equivariant map $\Phi:U'(V)\to U(V)$. If the closure of the orbit of $q$ contains a point $q'$, then the closure of the orbit of $\Phi(q)=p$ contains the orbit of $\Phi(q')$. However, the orbit of $(x'_1,\ldots, x'_c)$ is not in the image of $\Phi$. Indeed, if it was the image of $(x'_1,\ldots, x'_c,z')$ then we would have had $\langle z',\Re'^{b,a}(f)\rangle = \langle z,\Re^{b,a}(f)\rangle=1$ by continuity. But this contradicts the fact that $\Re'^{b,a}(f)=0$. 

We claim that if $q'\in U'(V)$ is in the closure of the orbit of $(x_1,\ldots, x_c,z)$ then $\dim(\Stab_{\GL(V)}(q'))<\dim(L)$. This follows from the fact that $\dim(\Stab_{\GL(V)}(q')\leq \dim(\Stab_{\GL(V)}(\Phi(q')))$. The orbit of $\Phi(q')$ is in the closure of the orbit of $p$, and the closure of the orbit of $\Phi(q')$ contains the orbit of $p'$, where we use the fact that the closure of any orbit contains a unique closed orbit. This implies that $\dim(\Stab_{\GL(V)}(q'))<\dim(L)$. 

We thus see that by passing from $(U(V),p)$ to $(U'(V),q)$ we strictly decreased the maximal dimension of the stabilizer of a point in the closure of the orbit of $p$, respectively of $q$. It also holds that $\Stab_{\GL(V)}(p)=\Stab_{\GL(V)}(q)=G$. By repeating this process finitely many times, and adding finitely many tensors, we will reach a space $U''(V)$ and a point $p''=(x_1,\ldots, x_c,z_1,\ldots, z_d)\in U''(V)$ such that $\Stab_{\GL(V)}(p'')=G$ and such that the orbit of $p''$ is closed. This will give us the desired algebraic structure and character of invariants. 
\end{proof}

\section{The good characters form a $K$-algebra}\label{sec:goodalgebra}
The ring $\winf$ is a polynomial algebra on the set $P$ of closed connected diagrams. 
This means that we have a one-to-one correspondence between characters $\winf\to K$ and functions from $P$ to $K$, given by restriction.
The set of functions $K^P = \{f:P\to K\}$ carries an additional structure of a $K$-algebra. To state this precisely, for every closed connected diagram $Di\in P$, every $\chi_1,\chi_2\in K^P$, and every $t\in K$, we have $$(\chi_1+\chi_2)(Di) = \chi_1(Di) + \chi_2(Di), (\chi_1\cdot \chi_2)(Di) = \chi_1(Di) \cdot \chi_2(Di), $$ $$(t\chi)(Di) = t(\chi(Di)).$$ 
In Section 6 of \cite{meirUIR} we introduced two coproducts on $\winf$, $\Delta$ and $\Delot$. The elements of $P$ are primitive with respect to $\Delta$ and are group-like with respect to $\Delot$. 
A direct calculation shows that as characters of $\winf$ we have $\chi_1\cdot\chi_2 = (\chi_1\ot \chi_2)\Delot$ and $\chi_1+\chi_2 = (\chi_1\ot \chi_2)\Delta$. 
We claim the following:
\begin{lemma}
The set of good characters in $K^P$ is closed under addition and multiplication.
\end{lemma}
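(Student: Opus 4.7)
The plan is to invoke the equivalence of conditions (1) and (2) in Theorem~\ref{thm:main1}: a character $\chi$ is good precisely when it is afforded by an algebraic structure in some $K$-good category. Given good characters $\chi_1,\chi_2\in K^P$ afforded respectively by structures $(A_1,(y_i^{(1)}))$ in a $K$-good category $\D_1$ and $(A_2,(y_i^{(2)}))$ in $\D_2$, I will exhibit two structures of type $((p_i,q_i))$ inside the Deligne tensor product $\D:=\D_1\boxtimes\D_2$, one affording $\chi_1\cdot\chi_2$ and the other affording $\chi_1+\chi_2$. Since $\chi_1$ and $\chi_2$ are good, by Theorem~\ref{thm:main1} the categories $\D_1,\D_2$ may be taken to be semisimple, so $\D$ is again a $K$-good semisimple category with $\End_{\D}(\one)=K\ot_KK=K$, finite-dimensional hom-spaces $\Hom(X_1\boxtimes X_2,Y_1\boxtimes Y_2)=\Hom(X_1,Y_1)\ot\Hom(X_2,Y_2)$, and the obvious rigid symmetric monoidal structure.

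For the product $\chi_1\cdot\chi_2$, I take the structure $(A_1\boxtimes A_2,(y_i^{(1)}\boxtimes y_i^{(2)}))$ in $\D$. The canonical isomorphisms $(A_1\boxtimes A_2)^{\ot n}\cong A_1^{\ot n}\boxtimes A_2^{\ot n}$ show that each $y_i^{(1)}\boxtimes y_i^{(2)}$ has the correct source and target. Since the trace in a Deligne tensor product is multiplicative on product morphisms (and more generally the realization of any closed diagram in $(A_1\boxtimes A_2,y_i^{(1)}\boxtimes y_i^{(2)})$ is the boxtimes-product of its realizations in the two factors), for every $Di\in P$ the resulting character value is $\chi_1(Di)\chi_2(Di)$, identifying it with $\chi_1\cdot\chi_2$.

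For the sum $\chi_1+\chi_2$, set $A:=(A_1\boxtimes \one_2)\oplus(\one_1\boxtimes A_2)$ in $\D$, with the obvious embeddings $\iota_j$ and projections $\pi_j$, and define the $i$-th structure tensor as the block-diagonal extension by zero,
\[ y_i:=\iota_1^{\ot p_i}\circ\bigl(y_i^{(1)}\boxtimes\mathrm{id}\bigr)\circ\pi_1^{\ot q_i}\;+\;\iota_2^{\ot p_i}\circ\bigl(\mathrm{id}\boxtimes y_i^{(2)}\bigr)\circ\pi_2^{\ot q_i}. \]
Expanding $A^{\ot n}$ in terms of the $2^n$ summands indexed by $\{1,2\}$-colorings of the tensor factors, each $y_i$ vanishes on any non-monochromatic input or output. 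The realization of a closed diagram $Di\in P$ on $(A,(y_i))$ then decomposes as a sum over colorings of the strands, but since $Di$ is connected, only the two globally monochromatic colorings survive; they contribute $\chi_1(Di)$ and $\chi_2(Di)$ respectively, so the character afforded by $(A,(y_i))$ equals $\chi_1+\chi_2$.

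The main obstacle is the combinatorial step in the sum case: one must rigorously expand the evaluation of a constructible diagram on the direct sum $A=A'_1\oplus A'_2$ as a sum over strand-colorings and verify that the \emph{connectedness} of diagrams in $P$ forces all strands to be monochromatic, while mixed colorings annihilate by virtue of the block-diagonal definition. The product case requires only that trace respect $\boxtimes$, which is essentially formal. Once both constructions are validated, the target structures live in the $K$-good category $\D_1\boxtimes\D_2$, and the implication (2)$\Rightarrow$(1) of Theorem~\ref{thm:main1} shows that $\chi_1\cdot\chi_2$ and $\chi_1+\chi_2$ are good, completing the proof.
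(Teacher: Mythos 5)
Your proposal is correct and follows essentially the same route as the paper: both embed the two structures into the Deligne product of two semisimple $K$-good categories (the paper uses $\C_{\chi_1}\boxtimes\C_{\chi_2}$ with $W\boxtimes\one$ and $\one\boxtimes W$), form the tensor product and direct sum structures there, and conclude via the equivalence in Theorem \ref{thm:main1}. The only difference is cosmetic: you spell out the strand-coloring/connectedness argument for why direct sum and tensor product of structures yield the sum and product of characters, which the paper simply cites from Section 6 of \cite{meirUIR}.
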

\begin{proof}
Assume that $\chi_1$ and $\chi_2$ are good characters. Then the categories $\C_{\chi_1}$ and $\C_{\chi_2}$ are semisimple $K$-good categories. 
We can consider the Deligne product $\C_{\chi_1}\boxtimes \C_{\chi_2}$, which is again a semisimple $K$-good category. 
This category contains the algebraic structures $W_1:= W\boxtimes\one$ and $W_2:= \one\boxtimes W$. We will write $(W_1,(x_i))$ and $(W_2,(y_i))$ to indicate the specific structure tensors of these structures. 
It is easy to see that the character of invariants of $W_i$ is $\chi_i$ for $i=1,2$. Since the two structures live in the same category we can consider the structures $(W_1\oplus W_2,(x_i\oplus y_i))$ and $(W_1\ot W_2,(x_i\ot y_i))$. 
Following Section 6 of \cite{meirUIR} we see that the characters of invariants of these structures are $\chi_1+\chi_2$ and $\chi_1\cdot\chi_2$ respectively. But since these are the character of invariants of structures in a $K$-good category we get that $\chi_1+\chi_2$ and $\chi_1\cdot \chi_2$ are good characters, as required.
\end{proof}
To prove that the good characters form a $K$-algebra we just need to show that the set of good characters is closed under the action of $t\in K$. To do so, we will use the category $\Rep(S_t)$, which will be described in detail in Section \ref{sec:examples}. The proof of the next proposition finishes the proof of Theorem \ref{thm:main3}
\begin{proposition}
Assume that $\chi:\winf\to K$ is a good character. Then $t\chi$ is a good character as well.
\end{proposition}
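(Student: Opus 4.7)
The plan is to exhibit $t\chi$ as the character of invariants of an algebraic structure in some $K$-good category, and then invoke Theorem \ref{thm:main1} to conclude. Following the same template as the previous lemma, I would work inside the Deligne tensor product $\D := \C_{\chi} \boxtimes \Rep(S_t)$, where $\Rep(S_t)$ is the interpolation category to be reviewed in Section \ref{sec:examples}. Since $\C_\chi$ is a semisimple $K$-good category (Proposition \ref{prop:2halfmainthm1}) and $\Rep(S_t)$ is a $K$-good category, their Deligne product is again such a category.

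The key ingredient is the generating object $h \in \Rep(S_t)$ of categorical dimension $t$, together with its canonical commutative special Frobenius algebra structure (multiplication $m$, unit $u$, comultiplication $\Delta$, counit $\epsilon$), which at integer values $t=n$ specializes to the standard separable algebra structure on $K^n$. Iterated applications of $m$, $\Delta$, $u$, $\epsilon$ yield, for every $(p,q)\in\N^2$, a canonical morphism $\mu_{p,q}:h^{\otimes q}\to h^{\otimes p}$ (at $t=n$ it is the diagonal map $e_{j_1}\otimes\cdots\otimes e_{j_q}\mapsto \delta_{j_1=\cdots=j_q}\,e_{j_1}^{\otimes p}$). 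Denoting by $(W,(x_i))$ the image of the tautological structure in $\C_\chi$, I would then form the structure
\[
(W\boxtimes h,\; (x_i\boxtimes \mu_{p_i,q_i}))
\]
in $\D$ and compute its character of invariants $\chi'$.

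Because evaluation and coevaluation on $W\boxtimes h$ factor as tensor products of the evaluations and coevaluations on $W$ and on $h$ separately, any realization of a closed diagram built from the combined structure tensors factors as the product of the realization on $(W,(x_i))$ with the realization on $(h,(\mu_{p,q}))$. In particular, for a closed connected $D\in P$ one gets
\[
\chi'(D)=\chi(D)\cdot v_h(D),
\]
where $v_h(D)\in K$ is the scalar obtained from evaluating the same closed connected diagram on $(h,(\mu_{p,q}))$ inside $\Rep(S_t)$. Connectedness of $D$ transfers to both tensor-factor diagrams, since each occurrence of $x_i\boxtimes\mu_{p_i,q_i}$ has matching leg-structure in the two factors.

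The heart of the argument — and what I regard as the main obstacle — is showing that $v_h(D)=t$ for every closed connected diagram $D$. This is really a statement about $\Rep(S_t)$ itself: for $t=n\in\N$, the claim is a direct linear-algebra computation in $\Vec_K$, since a closed connected diagram on the commutative separable algebra $K^n$ collapses to a single diagonal sum $\sum_{j=1}^n 1 = n$; and the scalar $v_h(D)$ depends polynomially on $t$ by the very construction of $\Rep(S_t)$ via partition diagrams, so the equality $v_h(D)=t$ extends from $\N$ to all of $K$. Granting this, $\chi'(D)=t\,\chi(D)$ for every $D\in P$, i.e.\ $\chi'=t\chi$. Hence $t\chi$ is afforded by a structure in a $K$-good category, and Theorem \ref{thm:main1} yields that $t\chi$ is good.
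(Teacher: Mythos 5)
Your proposal is correct and takes essentially the paper's approach: the paper reduces, via the already-established closure of good characters under multiplication, to showing that the constant character $c_t$ is good, and realizes $c_t$ by precisely your structure $(h,(\mu_{p_i,q_i}))$ inside $\Rep(S_t)$, with the same connectedness argument showing every closed connected diagram evaluates to $t$. Your version merely inlines the multiplicativity step by forming $(W\boxtimes h,(x_i\boxtimes \mu_{p_i,q_i}))$ directly in $\C_{\chi}\boxtimes\Rep(S_t)$, and justifies the key evaluation $v_h(D)=t$ by integer specialization plus polynomiality in $t$ rather than the paper's diagrammatic reduction; both justifications are sound.
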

\begin{proof}
Since we already know that the set of good characters is closed under multiplication, it will be enough to show that the character $c_t$, given by $c_t(Di) = t$ for every closed connected diagram $Di$, is a good character. For this we will use the category $\Rep(S_t)$ described in Section \ref{sec:examples}.
The category $\Rep(S_t)$ can be described as $\C_{\chi}$, where $\chi$ is the character of invariants of a certain separable commutative Frobenius algerba. Such an algebra is given by the following list of tensors: $m$ of degree $(1,2)$, $\Delta$ of degree $(2,1)$, $\epsilon$ of degree $(0,1)$, and $u$ of degree $(1,0)$. We will see in Section \ref{sec:examples} that $\chi(Di)=t$ for every closed diagram formed by the boxes $m,\Delta,\epsilon,u,\Id_W$. 

We are interested here in structures of type $((p_i,q_i))$. We will define a structure of this type in $\C_{\chi}$. Let $W$ be the tautological structure in $\C_{\chi}$. We define $y_i:W^{\ot q_i}\to W^{\ot p_i}$ to be $(\Delta\ot \Id_W^{\ot p-2})\cdots \Delta m(m\ot \Id_W)\cdots (m\ot \Id_W^{\ot q-2})$ in case $p\geq 2$ and $q\geq 2$. If $p=1$ we just remove the $\Delta$-part from the above expression, and if $p=0$ we replace it by $\epsilon$. If $q=1$ we remove the $m$-part from the above expression, and if $q=0$ we replace it by $u$. 
We get in this way an algebraic structure of type $((p_i,q_i))$ in $\C_{\chi}$. Write $\psi$ for its character of invariants. We will show that $\psi=c_t$, thus proving the claim. 
If $Di$ is any closed connected diagram for structures of type $((p_i,q_i))$, we can create a closed diagram $Di'$ for structures of type $((1,2),(2,1),(0,1),(1,0))$ by replacing any box labeled by $x_i$ by the composition $y_i$ described above. It is easy to see that if $Di$ is connected, then $Di'$ is connected as well, since the morphisms are formed by connected diagrams (even if not closed ones). 
We then get that $\psi(Di) = \chi(Di')=t$ because $Di'$ is a closed connected diagram. But this means that $\psi=c_t$, so we are done.
\end{proof}

\section{Interpolations}\label{sec:interpolations}
Our goal in this section is to show that under some mild conditions a family of characters, and their symmetric monoidal categories, can be interpolated. 
We begin with the following definition:
\begin{definition}
A one-parameter family of characters $(\chi_t)_{t\in K}$ is a collection of characters $\chi_t:\winf\to K$, such that for every closed diagram $Di$ the element $\chi_t(Di)\in K$ is a polynomial in $t$. 
A family $(\chi_t)_{t\in K}$ is called additive if $\forall t_1,t_2\in K\, \chi_{t_1}+ \chi_{t_2} = \chi_{t_1+t_2}$, and it is called multiplicative if $\forall t_1,t_2\in K\, \chi_{t_1}\chi_{t_2} = \chi_{t_1t_2}$.
\end{definition}
\begin{remark}
The condition that $\chi_t(Di)$ is a polynomial in $t$ for every closed diagram is equivalent to the condition that $\chi_t(Di)$ is a polynomial in $t$ for every closed connected diagram, because every diagram can be written as a product of closed connected diagrams.
\end{remark}
\begin{definition}
We say that a property holds for almost all $t\in K$ if it does not hold only for finitely many values of $t$.
\end{definition}
\begin{lemma}
Let $(\chi_t)_{t\in K}$ be a one-paramter family of characters, and let $c\in Con^{p,q}$ be a constructible morphism. Assume that there is an infinite subset $\{s_1,s_2,\ldots\}\subseteq K$ such that $c\in \rad(\pair^{p,q}_{\chi_{s_i}})$. Then $c\in \rad(\pair^{p,q}_{\chi_t})$ for every $t\in K$. 
\end{lemma}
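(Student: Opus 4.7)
The plan is to fix an arbitrary $d \in Con^{q,p}$ and analyze how $\pair^{p,q}_{\chi_t}(c,d)$ depends on $t$. By definition $\pair^{p,q}_{\chi_t}(c,d) = \chi_t\bigl(\pair^{p,q}(c,d)\bigr)$, and $\pair^{p,q}(c,d)$ is a fixed element of $\winf$. Since $\winf$ is a polynomial algebra on the set $P$ of closed connected diagrams, we can write $\pair^{p,q}(c,d)$ as a specific polynomial $F(Di_1,\ldots,Di_m)$ in finitely many closed connected diagrams $Di_1,\ldots,Di_m \in P$.

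Next I would use the hypothesis that $(\chi_t)$ is a one-parameter family: for each $Di_j$, the value $\chi_t(Di_j)$ is a polynomial in $t$. Substituting, the scalar $\chi_t(\pair^{p,q}(c,d)) = F\bigl(\chi_t(Di_1),\ldots,\chi_t(Di_m)\bigr)$ is itself a polynomial in $t$. Call this polynomial $g_d(t) \in K[t]$.

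By hypothesis, $c \in \rad(\pair^{p,q}_{\chi_{s_i}})$ for every $i$, which means $g_d(s_i) = \chi_{s_i}(\pair^{p,q}(c,d)) = 0$ for all $i$. Since $\{s_i\}$ is infinite and $K$ is a field (in particular an integral domain), a polynomial with infinitely many roots must be identically zero, so $g_d \equiv 0$. Therefore $\chi_t(\pair^{p,q}(c,d)) = 0$ for every $t \in K$. As $d \in Con^{q,p}$ was arbitrary, this gives $c \in \rad(\pair^{p,q}_{\chi_t})$ for every $t \in K$, as desired.

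There is no real obstacle here; the argument is a direct application of the polynomiality built into the definition of a one-parameter family, combined with the fact that a nonzero univariate polynomial over a field has only finitely many zeros. The only point that needs to be checked carefully is the passage from the polynomial dependence of $\chi_t$ on individual generators $Di_j \in P$ to the polynomial dependence of $\chi_t$ applied to an arbitrary element of $\winf$, which is immediate because $\winf$ is the polynomial algebra on $P$ and characters commute with sums and products.
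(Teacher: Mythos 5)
Your proposal is correct and follows essentially the same route as the paper: fix $d\in Con^{q,p}$, observe that $\chi_t(\pair^{p,q}(c\ot d))$ is a polynomial in $t$ (you spell out the reason via the polynomial-algebra structure of $\winf$, which the paper leaves implicit), and conclude from the infinitely many zeros that it vanishes identically.
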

\begin{proof}
The assertion $c\in \rad(\pair^{p,q}_{\chi_t})$ is equivalent to $$\forall d\in Con^{q,p}\, \chi_t(\pair^{p,q}(c\ot d))=0.$$
But for every $d\in Con^{q,p}$, $\chi_t(\pair^{p,q}(c\ot d))$ is a polynomial in $t$. By assumption, this polynomial has infinitely many zeros, and is therefore zero. 
\end{proof}

We claim the following:
\begin{lemma}\label{lem:boundingdim}
Let $(\chi_t)_{t\in K}$ be a one-paramter family of characters. Assume that there is an infinite subset $\{s_1,s_2,\ldots\}\subseteq K$ such that the following condition holds: for every $(p,q)\in\N^2$ there is a number $n(p,q)$ such that for every $i$ the codimension of $\rad(\pair^{p,q}_{\chi_{s_i}})$ in $Con^{p,q}$ is $\leq n(p,q)$. Then the codimension of $\rad(\pair^{p,q}_{\chi_t})$ in $Con^{p,q}$ is $\leq n(p,q)$ for all $t$.
If moreover there are elements $c_1,\ldots, c_{n(p,q)}\in Con^{p,q}$ such that 
$c_1+\rad(\pair^{p,q}_{\chi_t}),\ldots,c_{n(p,q)}+\rad(\pair^{p,q}_{\chi_t})$ form a basis of $Con^{p,q}/\rad(\pair^{p,q}_{\chi_t})$ for some value of $t$ then they form a basis of $Con^{p,q}/\rad(\pair^{p,q}_{\chi_t})$ for almost all $t$. 
\end{lemma}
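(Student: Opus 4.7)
The plan is to translate the statement about the codimension of $\rad(\pair^{p,q}_{\chi_t})$ into rank statements about matrices whose entries are polynomials in $t$, and then exploit the fact that polynomials with infinitely many roots vanish identically. The basic observation is that for any tuple $c_1,\dots,c_m\in Con^{p,q}$ and $d_1,\dots,d_m\in Con^{q,p}$, the entries of the matrix $M(t)=[\chi_t(\pair^{p,q}(c_j\ot d_k))]_{j,k}$ are polynomials in $t$, because $(\chi_t)$ is a one-parameter family.

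For the first assertion I would fix any $n(p,q)+1$ elements $c_1,\dots,c_{n(p,q)+1}\in Con^{p,q}$ and show that they are linearly dependent modulo $\rad(\pair^{p,q}_{\chi_t})$ for every $t$. Consider the linear map $\phi_t\colon K^{n(p,q)+1}\to (Con^{q,p})^*$ sending $(\la_j)$ to the functional $d\mapsto \chi_t(\pair^{p,q}(\sum_j\la_j c_j\ot d))$. The $c_j$'s are linearly dependent modulo $\rad(\pair^{p,q}_{\chi_t})$ if and only if $\ker(\phi_t)\neq 0$, equivalently $\mathrm{rank}(\phi_t)\leq n(p,q)$, and this is in turn equivalent to the vanishing of every $(n(p,q)+1)\times(n(p,q)+1)$ minor of $M(t)=[\chi_t(\pair^{p,q}(c_j\ot d_k))]$ as $d_1,\dots,d_{n(p,q)+1}$ range over $Con^{q,p}$. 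Each such minor is a polynomial in $t$. By the hypothesis, for every $i$ we have $\dim Con^{p,q}/\rad(\pair^{p,q}_{\chi_{s_i}})\leq n(p,q)$, so the $c_j$'s are linearly dependent modulo that radical at $t=s_i$, and every such minor vanishes at every $s_i$. Having infinitely many zeros, each minor is identically zero, so the $c_j$'s are dependent modulo $\rad(\pair^{p,q}_{\chi_t})$ for all $t$.

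For the second assertion, suppose $c_1,\dots,c_{n(p,q)}$ project to a basis of $Con^{p,q}/\rad(\pair^{p,q}_{\chi_{t_0}})$ for some specific $t_0$. Their linear independence modulo $\rad(\pair^{p,q}_{\chi_{t_0}})$ gives elements $d_1,\dots,d_{n(p,q)}\in Con^{q,p}$ with $\det\bigl[\chi_{t_0}(\pair^{p,q}(c_j\ot d_k))\bigr]\neq 0$. Fixing these $d_k$'s, the function $t\mapsto \det\bigl[\chi_t(\pair^{p,q}(c_j\ot d_k))\bigr]$ is a non-zero polynomial, hence non-zero for almost all $t$. For those $t$, the $c_j$'s are linearly independent modulo $\rad(\pair^{p,q}_{\chi_t})$; combined with the codimension bound from the first assertion, they must project to a basis of $Con^{p,q}/\rad(\pair^{p,q}_{\chi_t})$.

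I do not anticipate a significant obstacle; the only subtle point is that $Con^{p,q}$ and $Con^{q,p}$ are a priori infinite dimensional, so one has to phrase the rank condition in terms of vanishing of finite minors as above rather than directly computing the rank of a single finite matrix. Once this is done, the argument is a routine application of the principle that a polynomial in one variable with infinitely many zeros must be identically zero (for part one), and that a non-zero polynomial in one variable has only finitely many zeros (for part two).
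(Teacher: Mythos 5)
Your proposal is correct and follows essentially the same route as the paper: both encode the codimension bound as the vanishing of all $(n(p,q)+1)\times(n(p,q)+1)$ determinants of pairing values, which are polynomials in $t$ vanishing at the infinitely many $s_i$, and both get the second assertion from the non-vanishing of a single such determinant being a Zariski-open (polynomial) condition in $t$. Your explicit handling of arbitrary row elements $c_j$ with the $d_k$ ranging independently over $Con^{q,p}$ is, if anything, a slightly more careful phrasing of the same rank-via-minors argument.
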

\begin{proof}
Let $(p,q)\in \N^2$. Let $\{w^{p,q}_1,w^{p,q}_2,\ldots\}$ be a basis for $Con^{p,q}$. Similarly, we have a basis $\{w_1^{q,p},w_2^{q,p},\ldots\}$ for $Con^{q,p}$. Then the pairing $\pair^{p,q}:Con^{p,q}\ot Con^{q,p}\to \winf$ is determined by its action on tensor products of basis elements. In particular, for every natural number $N$ and every $t\in K$ it holds that the codimension of $\rad(\pair^{p,q}_{\chi_t})$ in $Con^{p,q}$ is $\leq N$ if and only if for every $i_1,i_2,\ldots i_N,i_{N+1}\in \N$ the determinant of the matrix $$(\chi_t(\pair^{p,q}(w^{p,q}_{i_j}\ot w^{q,p}_{i_k})))_{j,k}$$ vanishes. Since $\chi_t$ is a one-parameter family, this determinant is a polynomial $p(t)$ in $t$. If $N=n(p,q)$ then we know that $p(s_i)=0$ for every $i=1,2,\ldots$. But a polynomial in one variable with infinitely many zeros is zero, so we deduce that the codimension of $\rad(\pair^{p,q}_{\chi_t})$ is $\leq n(p,q)$ for every $t\in K$. 

For the second assertion, assume that $c_1,\ldots, c_{n(p,q)}$ are elements of $Con^{p,q}$. Then they form a basis modulo $\rad(\pair^{p,q}_{\chi_t})$ if and only if the following condition is satisfied: there are elements $d_1,\ldots, d_{n(p,q)}$ in the basis of $Con^{q,p}$ such that the determinant of the matrix $(\chi_t(\pair^{p,q}(c_i\ot d_j)))$ is non-zero. This is a Zariski-open condition, since this determinant is also a polynomial in $t$. Thus, if there is a single value of $t$ for which there is a set of basis elements $d_1,\ldots, d_{n(p,q)}$ such that this determinant is non-zero, then it will be non-zero for almost all $t$. 
\end{proof}

\begin{lemma}\label{lem:ssnilpotent}
Let $(\chi_t)$ be a one-parameter family of characters, and let $A$ be an object of $\C_{univ}$. Assume that for almost all $t\in K$ the endomorphism algebra $R_t:=\End_{\wt{\C}_{\chi_t}}(F_{\chi_t}(A))$ is of dimension $N$ and that for some value of $t$ it has dimension $N$ and is semisimple. Then $R_t$ semisimple for almost all $t$, and for almost all $t$ it holds that if $r\in R_t$ is nilpotent then $\Tr(r)=0$. 
\end{lemma}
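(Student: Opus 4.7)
My plan is to mimic the strategy of Proposition \ref{prop:2halfmainthm1}, but generically in the parameter $t$. The key switch is that, since we do not a priori know that nilpotent elements have vanishing categorical trace (that was condition 2 of goodness), I will instead detect semisimplicity of $R_t$ through the regular trace pairing of Subsection \ref{subsec:findimalg}, and then deduce the nilpotent trace vanishing for free from Wedderburn.

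First I would fix $t_0$ at which $R_{t_0}$ is semisimple of dimension $N$ and pick lifts $\tilde c_1,\ldots,\tilde c_N\in\End_{\C_{univ}}(A)$ whose images form a $K$-basis of $R_{t_0}$. Using the identification $R_t=\End_{\C_{univ}}(A)/\Nn_{\chi_t}(A,A)$, the same argument as in Lemma \ref{lem:boundingdim} (applied to $\End_{\C_{univ}}(A)$ in place of $Con^{p,q}$) shows that the Gram matrix $M(t):=(\chi_t(\Tr(\tilde c_i\tilde c_j)))_{i,j}$ has entries polynomial in $t$ and is invertible at $t_0$; this uses the fact that $\Nn_{\chi_{t_0}}(A,A)$ is the radical of the categorical trace pairing, which therefore descends to a non-degenerate pairing on $R_{t_0}$. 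Hence $\det M(t)\neq 0$ for all but finitely many $t$, and combined with the hypothesis $\dim R_t=N$, the classes $\bar{\tilde c}_1,\ldots,\bar{\tilde c}_N$ form a basis of $R_t$ for almost all $t$. Pairing each product $\tilde c_i\tilde c_j$ against the $\tilde c_l$ and inverting $M(t)$ by Cramer's rule then exhibits the structure constants $\alpha_{ij}^k(t)$ of $R_t$ as rational functions of $t$, regular wherever $\det M(t)\neq 0$.

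Next I would invoke the lemma in Subsection \ref{subsec:findimalg}: $R_t$ is semisimple iff the discriminant $D(t)$ of its regular trace pairing is nonzero. In the basis $\bar{\tilde c}_1,\ldots,\bar{\tilde c}_N$ one has $D(t)=\det(\Tr_{reg}(\bar{\tilde c}_i\bar{\tilde c}_j))_{i,j}$, and each $\Tr_{reg}(\bar{\tilde c}_i\bar{\tilde c}_j)$ is a polynomial expression in the structure constants $\alpha_{\bullet}^{\bullet}(t)$, hence a rational function of $t$. Since $R_{t_0}$ is semisimple, $D(t_0)\neq 0$, so $D(t)\neq 0$ on a cofinite subset of $K$, and $R_t$ is semisimple for almost all $t$.

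Finally, for any $t$ at which $R_t$ is semisimple, Wedderburn gives $R_t\cong\bigoplus_i M_{n_i}(K)$ since $K$ is algebraically closed of characteristic zero. The categorical trace $\Tr\colon R_t\to K$ satisfies $\Tr(ab)=\Tr(ba)$ in any rigid symmetric monoidal category, so it is a $K$-linear central functional; on $\bigoplus_i M_{n_i}(K)$ any such functional is a $K$-linear combination of the Wedderburn block traces, each of which vanishes on nilpotent matrices, yielding the second claim. The main obstacle I anticipate is the first step, specifically ensuring that the dimension hypothesis and the rationality of the $\alpha_{ij}^k(t)$ are compatible on a cofinite locus; once this is done, the remaining steps are essentially bookkeeping.
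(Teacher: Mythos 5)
Your proposal is correct and follows essentially the same route as the paper: pick a basis of $R_{t_0}$ at the semisimple point, use the polynomial dependence of the categorical trace pairing on $t$ to see that its images stay a basis of $R_t$ generically, and then detect semisimplicity of $R_t$ for almost all $t$ by the non-vanishing of the regular trace discriminant, which you justify a bit more explicitly than the paper by extracting the structure constants as rational functions via Cramer's rule. The only real deviation is the last step: where the paper uses non-degeneracy of $\Tr_{reg}$ to produce a central element $r_t$ with $\Tr(a)=\Tr_{reg}(ar_t)$ and concludes from centrality, you instead invoke the (equally valid) fact that a functional with $\Tr(ab)=\Tr(ba)$ on a semisimple algebra over an algebraically closed field is a combination of Wedderburn block traces, hence kills nilpotents -- both arguments rest on the same two inputs, semisimplicity and the traciality of the categorical trace.
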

\begin{proof}
Let $s\in K$ be an element for which $R_s$ has dimension $N$ and is semisimple. Then $R_s$ has a basis given by $F_{\chi_s}(c_1),\ldots, F_{\chi_s}(c_N)$, for some morphisms $c_i$ in $\C_{univ}$. It follows from the previous lemma that $F_{\chi_t}(c_1),\ldots, F_{\chi_t}(c_N)$ form a basis for $R_t$ for almost all $t$. Let $\Tr_{reg}:R_t\to K$ be the trace of the regular representation of $R_t$. Then $R_t$ is semisimple if and only if the determinant of the matrix $$\Tr_{reg}(F_{\chi_t}(c_i)\circ F_{\chi_t}(c_j))$$ is non-zero (see Subsection \ref{subsec:findimalg}) Again, this is a Zariski-open condition. Since we know that this determinant does not vanish for $t=s$ it doesn't vanish for almost all $t$, and $R_t$ is thus semisimple for almost all $t$. 
Now, if $R_t$ is semisimple, then the fact that $\Tr_{reg}$ is non-degenerate implies that there is a unique $r_t\in R_t$ such that $\Tr(a) = \Tr_{reg}(ar_t)$ for every $a\in R_t$. This is because non-degeneracy of $\Tr_{reg}$ implies that every linear functional $R_t\to K$ is of the form $\Tr_{reg}(-x)$ for some $x\in R_t$.

We claim that $r_t$ is central. Indeed, it holds that $\Tr(ab) = \Tr(ba)$ for every $a,b\in R_t$. This implies that $\Tr_{reg}(abr_t) = \Tr_{reg}(bar_t)$. We use the linearity and cyclicity of $\Tr_{reg}$ to deduce that $\Tr_{reg}(a(br_t-r_tb))=0$ for every $a,b\in R_t$. Because $\Tr_{reg}$ is non-degenerate this implies that $r_tb-br_t=0$ for every $b\in R_t$, which means that $r_t$ is central in $R_t$. Now, if $a\in R_t$ is nilpotent then $\Tr(a) = \Tr_{reg}(ar_t)$. Since $r_t$ is central, $ar_t$ is nilpotent as well, and therefore $\Tr(a)=\Tr_{reg}(ar_t)=0$ as required.
\end{proof}

\begin{proposition}\label{prop:familymain}
Let $(\chi_t)$ be a one-parameter family of characters.
Assume that there is a countable subset $\{s_1,s_2,\ldots\}$ of values of $t$ such that $\chi_{s_i}$ is a good character for every $i$. Assume moreover that for every $(p,q)\in \N^2$ there is a number $n(p,q)$ such that 
$$\dim(Con^{p,q}/\rad(\pair^{p,q}_{\chi_{s_i}}))\leq n(p,q).$$
Then $\chi_t$ is not a good character for at most countably many values of $t$. If in addition $(\chi_t)$ is additive then $\chi_t$ is a good character for all $t$, and if it is multiplicative then $\chi_t$ is a good character for all $t\neq 0$. 
\end{proposition}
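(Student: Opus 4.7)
The plan is to prove the proposition object-by-object in $\C_{univ}$, combining the two preceding lemmas with the fact that $\C_{univ}$ has only countably many isomorphism classes of objects (each object being a finite direct sum of $W^{a,b}$'s with $(a,b)\in\N^2$).

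Lemma \ref{lem:boundingdim} already supplies the first condition of Definition \ref{def:good} for every $t\in K$, since it bounds the codimension of $\rad(\pair^{p,q}_{\chi_t})$ in $Con^{p,q}$ by $n(p,q)$ uniformly in $t$. All that remains is the nilpotent-trace condition, and I would handle this object-by-object. Fix $B\in\C_{univ}$ and set $R_t:=\End_{\wt{\C}_{\chi_t}}(F_{\chi_t}(B))$. The dimension of $R_t$ is the rank of the trace-pairing matrix on $\End_{\C_{univ}}(B)$, whose entries are $\chi_t$ applied to fixed elements of $\winf$ and are therefore polynomial functions of $t$. Hence $\dim R_t$ is bounded (by summing the relevant $n(p,q)$ coming from the decomposition of $B$) and attains its maximum $N(B)$ on a Zariski-open, cofinite subset of $K$. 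All but finitely many of the $s_i$ lie in this subset; for any such $s_i$ the algebra $R_{s_i}$ is semisimple, by the argument used in the proof of Proposition \ref{prop:2halfmainthm1}. The hypotheses of Lemma \ref{lem:ssnilpotent} are therefore satisfied for $B$, producing a finite set $E(B)\subseteq K$ outside which $R_t$ is semisimple and every nilpotent element of $R_t$ has trace zero. Via the identity $\chi_t(\Tr T)=\Tr(F_{\chi_t}(T))$ and the fact that ``$T^r$ is $\chi_t$-negligible'' is equivalent to ``$F_{\chi_t}(T)\in R_t$ is nilpotent'', this gives the nilpotent-trace condition at parameter $t$ for the object $B$ whenever $t\notin E(B)$.

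Since $\C_{univ}$ has only countably many isomorphism classes of objects, $E:=\bigcup_B E(B)$ is countable, and for every $t\notin E$ both conditions of Definition \ref{def:good} hold, proving the first assertion. For the additive claim, let $t\in K$ be arbitrary; the set $E\cup(t-E)$ is countable, so (using that $K$ is uncountable) I can pick $s\in K$ with $s,\,t-s\notin E$, and then $\chi_t=\chi_s+\chi_{t-s}$ is a sum of good characters, hence good by Theorem \ref{thm:main3}. The multiplicative case is analogous inside $K^{\times}$: given $t\neq 0$, pick $s\in K^{\times}$ with $s,\,t/s\notin E$ and write $\chi_t=\chi_s\cdot\chi_{t/s}$.

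The main technical point, which I see as the principal obstacle, is verifying both hypotheses of Lemma \ref{lem:ssnilpotent} for a single object $B$ simultaneously: one needs the maximal dimension $N(B)$ of $\dim R_t$ to be realized at a parameter at which $R_t$ is also semisimple, and it is precisely the availability of infinitely many good $s_i$ — together with Zariski upper semicontinuity of the rank — that makes this possible. Everything after that is a countability argument and an application of Theorem \ref{thm:main3}.
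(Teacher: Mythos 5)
Your proposal is correct and follows essentially the same route as the paper's proof: Lemma \ref{lem:boundingdim} for the first condition of Definition \ref{def:good}, then, object by object, Zariski-openness of the locus where $\dim R_t$ is maximal together with semisimplicity of $R_{s_i}$ at a good parameter (via Proposition \ref{prop:2halfmainthm1}) to invoke Lemma \ref{lem:ssnilpotent}, a countable union over the countably many objects of $\C_{univ}$, and finally closure of good characters under addition (resp.\ multiplication) for the additive (resp.\ multiplicative) case. The only difference is that the paper spends a sentence reducing to the case of an uncountable field (embedding $K$ into an uncountable extension and deducing back), whereas you assume $K$ uncountable outright; apart from that small caveat the two arguments coincide.
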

\begin{proof}
We use here the fact that a character is good if and only if it is afforded by a structure in a $K$-good category.
By Lemma \ref{lem:boundingdim} we see that the first condition of Definition \ref{def:good} holds for $\chi_t$ for every $t\in K$. 

For the second condition, we proceed as follows: 
for every object $A\in \C_{univ}$ we know that $R_t:= \End_{\wt{\C}_{\chi_t}}(F_{\chi_t}(A))$ is finite dimensional and of dimension $\leq M$ for some $M$ (this follows from the condition on the radicals and the fact that every object of $\C_{univ}$ has the form $\bigoplus_i W^{a_i,b_i}$. 
If $N = \sup_t\{\dim R_t\}$ then $\dim(R_t)=N$ holds for almost all $t$. This is because the condition $\dim(R_t)\geq N$ is a Zariski-open condition. In particular, for some $s_i$ it will hold that $\dim(R_{s_i})=N$. Since $\chi_{s_i}$ is a good character, it follows that $R_{s_i}$ is semisimple, by the proof of Proposition \ref{prop:2halfmainthm1}.
The condition of Lemma \ref{lem:ssnilpotent} are thus fulfilled, and we see that for almost all $t$ it holds that if $T:A\to A$ in $\C_{univ}$ satisfies that $T^n$ is $\chi_t$-negligible, then $F_{\chi_t}(T)\in R_t$ is nilpotent, and therefore $\Tr(F_{\chi_t}(T)) = \chi_t(\Tr(T))=0$. 
We thus see that for every object $A\in \C_{univ}$ there are at most finitely many values of $t$ for which the second condition of Definition \ref{def:good} is not satisfied. 
The category $\C_{univ}$ has countably many objects $A_1,A_2,\ldots$. Since the countable union of finite sets is countable, we get the first part of the proposition. 

For the second part, assume that $K$ is uncountable. This is not really a restriction, since we can always embed $K$ in an uncountable field $L$ and deduce back to $K$. Write $C = \{t| \chi_t \text{ is a good character}\}$. Then $D:= K\backslash C$ is at most countable. Assume that $(\chi_t)$ is additive and that $D\neq \varnothing$. Let $d\in D$. Consider the set $E = \{d-t| t\in C\}$. Then $E$ has a countable complement. It follows that $E\cap C\neq \varnothing$. So there is $t\in C$ such that $d-t$ is also in $C$. But since $C$ is closed under addition it follows that $d= d-t+t\in C$, a contradiction. The proof for the case where the family $(\chi_t)$ is multiplicative is similar, where we use multiplication instead of addition and $K\backslash \{0\}$ instead of $K$.
\end{proof}
\begin{definition} We call $\{s_1,s_2,\ldots\}$ a special collection for $(\chi_t)$ if it satisfies the condition of the proposition. 
\end{definition}
\section{Examples}\label{sec:examples}
\subsection{The empty structure, and the categories $\Rep(\GL_t(K))$}
Consider first the empty structure, where $r=0$ and there are no structure tensors. 
In this case the algebra of invariants $\winf$ is just $K[D]$, where $D$ is the dimension invariant. Define a one-parameter family of characters by the formula $\chi_t(D) = t$ for every $t\in K$. Notice that this is an additive family. 
We have the special collection $C=\{0,1,2,3,4,\ldots\}$. In this particular case it holds that $Con^{p,q}=0$ if $p\neq q$, while $Con^{p,p}$ has already finite rank $p!$ over $\winf=K[D]$, and it has a basis given by $L^{(p)}_{\sigma}$ for $\sigma\in S_p$. As a result, the condition for finite dimensionality of the hom-spaces in $\wt{\C}_{\chi_t}$ holds for all $t$, and $\chi_t$ is a good character for every $t$ by Proposition \ref{prop:familymain}. For $t=n$, a non-negative integer, we get the category $\Rep(\GL_n(K))$. We thus write $\C_{\chi_t} = \Rep(\GL_t(K))$ for every $t\in K$. The algebra of good characters in this case is simply $K$. This example was given by Deligne in \cite{Deligne3}.

\subsection{A single endomorphism}\label{subsec:example-idempotent}
Consider now the algebraic structure consisting of a vector space $W$ and a single endomorphism $T:W\to W$. Alternatively, we can think of such an algebraic structure as a $K[t]$-module, where $t$ acts by $T$. 
In this case the ring $\winf$ is $K[D,\Tr(T),\Tr(T^2),\ldots]$. Let $\chi:\winf\to K$ be a good character. 
By Corollary \ref{cor:endshape} we know that the rational function $\sum_i \chi(\Tr(T^i))X^i$ has the form $\sum_{j=1}^n \frac{t_j}{1-\la_jX}$ where $t_j\in K$ and $\{\la_j\}$ is the spectrum of $T$. 
The algebra $\End_{\C_{\chi}}(W)$ is generated by the endomorphism $T$. Since $\C_{\chi}$ is semisimple, this algebra is semisimple, and $W$ splits as $W=\bigoplus_{j=1}^n W_j$, where $W_j=\Ker(T-\la_j)$. It then follows easily that $t_j = \dim W_j$. Moreover, we get an equivalence of categories $$F:\C_{\chi}\equiv \Rep(\GL_{t_1}(K))\boxtimes\cdots\boxtimes\Rep(\GL_{t_n}(K)),$$
where $F(W_i) = \one\boxtimes\cdots\boxtimes W\boxtimes\one\boxtimes\cdots\boxtimes \one\in \Rep(\GL_{t_i}(K))$. The morphism $F(T)$ is then given by $F(T)|_{F(W_i)} = \la_i\Id_{F(W_i)}$. We thus see that any good rational function can be received here. For $\la\in K$ write $U_{\la}:\winf\to K$ for the character $\Tr(T^i)\mapsto \la^i$. A direct calculation shows that $U_{\la}U_{\mu} = U_{\la\mu}$. It follows that the algebra of good characters here is the monoid algebra of $(K,\times)$. 

We next use this type of algebraic structure to show that the category $\C_{\chi}$ does not necessarily satisfy a universal property, as was stated in Remark \ref{rem:reftoexample}. 
Take $A=K^2\in\Vec_K$, and take $T:A\to A$ to be the linear transformation represented by the matrix $\begin{pmatrix} 0 & 1 \\ 0 & 0 \end{pmatrix}$.
Then $\dim(A)=2$ and $\Tr(T^n)=0$ for all $n\geq 0$. Let $\chi$ be the character of invariants of $(A,T)$. Then $T\in \C_{univ}$ is a $\chi$-negligible morphism, and as a result $T=0$ in $\C_{\chi}$. This shows that there is no functor $F:\C_{\chi}\to \Vec_K$ such that $F(W,T)= (A,T)$, because $F$ is $K$-linear and therefore cannot send the zero vector to a non-zero vector.
 
\subsection{Non-degenerate symmetric pairings and the categories $\Rep(\text{O}_t(K))$}
For the next example, we would like to consider vector spaces $W$ with a non-degenerate symmetric pairing $c:W\ot W\to K$. The non-degeneracy is equivalent to the existence of $d\in W\ot W$ such that $ev(c\ot d)=\Id_W$, or in graphical terms: 
\begin{center}\scalebox{0.7}{
\begin{tikzpicture}
	\begin{pgfonlayer}{nodelayer}
		\node [style=2function] (0) at (-2.25, 2.75) {$c$};
		\node [style=2function] (1) at (0.25, 2.75) {$d$};
		\node [style=none] (2) at (-2, 2.25) {};
		\node [style=none] (3) at (-2.5, 2.25) {};
		\node [style=none] (4) at (-2, 1.75) {};
		\node [style=none] (5) at (-2.5, 1.75) {};
		\node [style=none] (6) at (0.5, 3.25) {};
		\node [style=none] (7) at (0, 3.25) {};
		\node [style=none] (8) at (0, 3.75) {};
		\node [style=none] (9) at (0.5, 3.75) {};
		\node [style=none] (10) at (-1, 3.75) {};
		\node [style=none] (11) at (-1, 1.75) {};
		\node [style=none] (12) at (1.75, 2.75) {=};
		\node [style=1function] (13) at (3.5, 2.75) {$\Id_W$};
		\node [style=none] (14) at (3.5, 3.25) {};
		\node [style=none] (15) at (3.5, 3.75) {};
		\node [style=none] (16) at (3.5, 2.25) {};
		\node [style=none] (17) at (3.5, 1.75) {};
	\end{pgfonlayer}
	\begin{pgfonlayer}{edgelayer}
		\draw (3.center) to (5.center);
		\draw (2.center) to (4.center);
		\draw (11.center) to (10.center);
		\draw (8.center) to (7.center);
		\draw (9.center) to (6.center);
		\draw [bend left=270, looseness=1.75] (9.center) to (10.center);
		\draw [bend left=90, looseness=1.75] (11.center) to (4.center);
		\draw (15.center) to (14.center);
		\draw (16.center) to (17.center);
	\end{pgfonlayer}
\end{tikzpicture}}
\end{center}
The fact that $c$ is symmetric implies that $d$ is symmetric as well, considered as a map $W^*\ot W^*\to K$. This can also be proved directly using diagrams. 
We thus consider the theory $\T$ which contains two axioms: the axioms $ev(c\ot d)-\Id_W$ and the axiom $cL^{(2)}_{(12)} - c$.
In this case, the resulting hom-spaces in $\C_{univ}^{\T}$ already have finite rank over $\winf/I_{\T}$. Indeed, whenever a diagram contains a connection between an input string of a $c$-box and an output string of a $d$-box we can reduce this to the identity morphism on $W$ (we use here the fact that both $c$ and $d$ are symmetric). Thus, every diagram is equivalent to a diagram in which no $c$- and $d$-boxes are connected. But there are only finitely many such diagrams with a given number of input and output strings. 

For the scalar invariants, notice that the only closed connected diagram in which no input strings of $c$ are connected to output strings of of $d$ is the dimension invariant. Thus $\winf/I_{\T}\cong K[D]$ in this case, and characters for models of $\T$ are determined by their value on $D$. Write $\chi_t:\winf\to \winf/I_{\T}\cong K[D]\to K$ for the unique character that satisfies $\chi(D)=t$. Then $\chi_n$ is a good character for every $n\in \N$ . Indeed, it is the character of invariants of the structure $(K^n,(c_n,d_n))$ where $c_n = \sum_{i=1}^n e^i\ot e^i$ and $d_n = \sum_{i=1}^n e_i\ot e_i$, where $e_i$ is the standard basis for $K^n$. Up to isomorphism, this is the only $n$-dimensional vector space with a non-degenerate symmetric pairing, and it thus have a closed orbit. It is easy to see that $(\chi_t)$ is an additive family. Since $\chi_n$ is good for every $n\in\N$, and the dimensions of the hom-spaces is uniformly bounded, Proposition \ref{prop:familymain} implies that $\chi_t$ is good for every $t$. In case $t=n$ is an integer, we get by Proposition \ref{prop:mainvecstr} that $\C_{\chi_n}\cong \Rep(\Aut(K^n,(c_n,d_n))) = \Rep(O_t(K))$. We thus denote $\C_{\chi_t}$ by $\Rep(O_t(K))$. This example, as well as the next two examples, were given by Deligne in \cite{Deligne3}. 

\subsection{Non-degenerate skew-symmetric pairings and the categories $\Rep(\text{Sp}_t(K))$}
Consider now skew-symmetric pairings. In a very similar way to the last example, such structures are given by two structure tensors: $c$ of degree $(0,2)$ and $d$ of degree $(2,0)$. The theory here is made of the two axioms 
$c L^{(2)}_{(12)} + c$ and $ev(c\ot d) - \Id_W$. As before, one can show that $d$ is also skew-symmetric, and that $\winf/I_{\T}\cong K[D]$. We define $\chi_t:\winf\to \winf/I_{\T}\cong K[D]\to K$ to be the character which sends $D$ to $t\in K$. Just as in the last example we can show that the hom-spaces in $\C_{univ}^{\T}$ have finite rank over $\winf/I_{\T}$. For $t=2n$ where $n\in \N$, the character $\chi_{2n}$ is a good character, since it is the character of the structure $(K^{2n},c_n,d_n)$ where $c_n = \sum_{i=1}^n e^i\ot e^{i+n} - e^{i+n}\ot e^i$ and $d_n =  \sum_{i=1}^n e_i\ot e_{i+n} - e_{i+n}\ot e_i$. Since $(\chi_t)$ is an additive family, Proposition \ref{prop:familymain} implies that $\chi_t$ is a good character for all $t$. Since $\C_{\chi_{2n}} \cong \Rep(\Aut(K^{2n},(c_n,d_n)))\cong \Rep(\text{Sp}_{2n}(K))$ we write $\C_{\chi_t} = \Rep(\text{Sp}_t(K))$. As pointed out in Section 9.5 of \cite{Deligne3} for the case that $t$ is an integer, there is a connection between the categories $\Rep(\text{O}_t)$ and $\Rep{Sp}_{-t}$. 
We have an equivalence of symmetric monoidal categories $F:\Rep(\text{O}_t)\boxtimes \text{sVec}_K\cong \Rep(\text{Sp}_{-t})\boxtimes \text{sVec}_K$. This equivalence is given by acting as the identity on $\text{sVec}_K$, and by sending the tautological structure $W$ of $\Rep(\text{O}_t)$ to $W\boxtimes k_{-}$, where $k_{-}$ is the odd vector space of dimension 1, and $W\in \Rep(\text{Sp}_{-t})$ is the tautological object. One can easily check that $W\in \Rep(\text{O}_t)$ and $W\boxtimes k_{-}\in \Rep(\text{Sp}_{-t})$ have the same invariants, and therefore the functor $F$ is well defined. It is also easy to write its inverse, by a similar formula. 

\subsection{Separable commutative algebras and the categories $\Rep(S_t)$}
In this example we consider the structure of separable commutative algebras. Such an algebra is an associative commutative unital algebra $W$ such that the multiplication $m:W\ot W\to W$ splits as a $W-W$-bimodule morphism. The splitting $s$ is determined by $c:=s(1)\in W\ot W$. 

To phrase everything in the language of algebraic structures, a separable commutative algebra contains a multiplication $m:W\ot W\to W$, a unit $u:\one\to W$, and separability idempotent $c:\one\to W\ot W$. Another formulation is given by replacing $c$ with $\Delta:= ev(m\ot c):W\to W\ot W$. The separability idempotent $c$ can be recovered from $\Delta$ as $\Delta\circ u:\one\to W\to W\ot W$. The comultiplication $\Delta$ was used in Section \ref{sec:goodalgebra}. 

The theory $\T_{sep}$ of separable commutative algebras contains the axioms saying that $m$ is associative and commutative, that $u$ is a unit for $m$, and that $c$ defines a splitting of $m$ as a $W-W$-bimodule morphism. 
This boils down to the following equality of morphisms: 
\begin{center} 
\begin{equation}\label{fig:sepalg} \scalebox{0.7}{
\begin{tikzpicture}
	\begin{pgfonlayer}{nodelayer}
		\node [style=2function] (0) at (-4, 3.5) {$c$};
		\node [style=2function] (1) at (-4, 5) {$m$};
		\node [style=none] (2) at (-4.25, 3.75) {};
		\node [style=none] (3) at (-3.75, 3.75) {};
		\node [style=none] (4) at (-3.75, 4.75) {};
		\node [style=none] (5) at (-4.25, 4.75) {};
		\node [style=none] (6) at (-4, 5.25) {};
		\node [style=none] (7) at (-4, 6) {};
		\node [style=none] (8) at (-2.5, 4.5) {=};
		\node [style=1function] (9) at (-0.75, 4.5) {$u$};
		\node [style=none] (10) at (-0.75, 4.75) {};
		\node [style=none] (11) at (-0.75, 6) {};
		\node [style=2function] (12) at (4.5, 3.25) {$c$};
		\node [style=2function] (13) at (2.75, 5) {$m$};
		\node [style=none] (14) at (4.25, 3.5) {};
		\node [style=none] (15) at (2.5, 3) {};
		\node [style=none] (16) at (2.5, 4.75) {};
		\node [style=none] (17) at (3, 4.75) {};
		\node [style=none] (18) at (2.75, 5.25) {};
		\node [style=none] (19) at (2.75, 6.25) {};
		\node [style=none] (20) at (4.75, 3.5) {};
		\node [style=none] (21) at (4.75, 6.25) {};
		\node [style=2function] (22) at (6.75, 3.25) {$c$};
		\node [style=2function] (23) at (8.25, 5) {$m$};
		\node [style=none] (24) at (7, 3.5) {};
		\node [style=none] (25) at (8.5, 3) {};
		\node [style=none] (26) at (8.5, 4.75) {};
		\node [style=none] (27) at (8, 4.75) {};
		\node [style=none] (28) at (8.25, 5.25) {};
		\node [style=none] (29) at (8.25, 6.25) {};
		\node [style=none] (30) at (6.5, 3.5) {};
		\node [style=none] (31) at (6.5, 6.25) {};
		\node [style=none] (32) at (5.5, 4.5) {=};
		\node [style=none] (33) at (0.25, 3.25) {,};
	\end{pgfonlayer}
	\begin{pgfonlayer}{edgelayer}
		\draw (5.center) to (2.center);
		\draw (4.center) to (3.center);
		\draw (7.center) to (6.center);
		\draw (11.center) to (10.center);
		\draw [in=90, out=-90, looseness=1.25] (17.center) to (14.center);
		\draw (16.center) to (15.center);
		\draw (19.center) to (18.center);
		\draw (21.center) to (20.center);
		\draw [in=90, out=-90, looseness=1.25] (27.center) to (24.center);
		\draw (26.center) to (25.center);
		\draw (29.center) to (28.center);
		\draw (31.center) to (30.center);
	\end{pgfonlayer}
\end{tikzpicture}}
\end{equation}
\end{center}
The axioms in $\T_{sep}$ also imply the following equality of constructible morphisms:
\begin{center}\begin{equation}\label{fig:sepalg2}\scalebox{0.7}{
\begin{tikzpicture}
	\begin{pgfonlayer}{nodelayer}
		\node [style=2function] (0) at (-6.5, 2.25) {$c$};
		\node [style=2function] (1) at (-8.25, 4) {$m$};
		\node [style=none] (2) at (-6.75, 2.5) {};
		\node [style=none] (3) at (-8.5, 2.25) {};
		\node [style=none] (4) at (-8.5, 3.75) {};
		\node [style=none] (5) at (-8, 3.75) {};
		\node [style=none] (6) at (-8.25, 4.25) {};
		\node [style=none] (7) at (-8.25, 5) {};
		\node [style=none] (8) at (-6.25, 2.5) {};
		\node [style=none] (9) at (-6.25, 5) {};
		\node [style=2function] (10) at (-4, 2.25) {$c$};
		\node [style=2function] (11) at (-2.5, 4) {$m$};
		\node [style=none] (12) at (-3.75, 2.5) {};
		\node [style=none] (13) at (-2.25, 2.75) {};
		\node [style=none] (14) at (-2.25, 3.75) {};
		\node [style=none] (15) at (-2.75, 3.75) {};
		\node [style=none] (16) at (-2.5, 4.25) {};
		\node [style=none] (17) at (-2.5, 5) {};
		\node [style=none] (18) at (-4.25, 2.5) {};
		\node [style=none] (19) at (-4.25, 5) {};
		\node [style=none] (20) at (-5, 3.75) {=};
		\node [style=none] (21) at (-9.75, 5) {};
		\node [style=none] (22) at (-9.75, 2.25) {};
		\node [style=none] (23) at (-0.75, 5) {};
		\node [style=none] (24) at (-0.75, 2.75) {};
		\node [style=none] (25) at (0, 3.75) {=};
		\node [style=2function] (26) at (1, 2.25) {$c$};
		\node [style=none] (27) at (1.25, 2.5) {};
		\node [style=none] (28) at (0.75, 2.5) {};
		\node [style=2function] (29) at (1, 4) {$m$};
		\node [style=none] (30) at (1.25, 3.75) {};
		\node [style=none] (31) at (0.75, 3.75) {};
		\node [style=none] (32) at (1, 4.25) {};
		\node [style=none] (33) at (1, 5) {};
		\node [style=none] (34) at (2, 3.75) {=};
		\node [style=2function] (35) at (3.25, 2.25) {$c$};
		\node [style=none] (36) at (3.5, 2.5) {};
		\node [style=none] (37) at (3, 2.5) {};
		\node [style=2function] (38) at (3.25, 4) {$m$};
		\node [style=none] (39) at (3, 3.75) {};
		\node [style=none] (40) at (3.5, 3.75) {};
		\node [style=none] (41) at (3.25, 4.25) {};
		\node [style=none] (42) at (3.25, 5) {};
		\node [style=none] (43) at (4.25, 4) {=};
		\node [style=1function] (44) at (5.25, 4) {$u$};
		\node [style=none] (45) at (5.25, 4.25) {};
		\node [style=none] (46) at (5.25, 5) {};
	\end{pgfonlayer}
	\begin{pgfonlayer}{edgelayer}
		\draw [in=90, out=-90, looseness=1.25] (5.center) to (2.center);
		\draw (4.center) to (3.center);
		\draw (7.center) to (6.center);
		\draw (9.center) to (8.center);
		\draw [in=90, out=-90, looseness=1.25] (15.center) to (12.center);
		\draw (14.center) to (13.center);
		\draw (17.center) to (16.center);
		\draw (19.center) to (18.center);
		\draw (21.center) to (22.center);
		\draw [bend right=90, looseness=1.75] (22.center) to (3.center);
		\draw [bend left=90, looseness=1.50] (21.center) to (7.center);
		\draw [bend left=90] (19.center) to (23.center);
		\draw (24.center) to (23.center);
		\draw [bend left=90, looseness=1.25] (24.center) to (13.center);
		\draw [in=90, out=-90, looseness=1.25] (30.center) to (28.center);
		\draw [in=90, out=-90, looseness=1.50] (31.center) to (27.center);
		\draw (32.center) to (33.center);
		\draw [in=90, out=-90, looseness=1.25] (39.center) to (37.center);
		\draw [in=90, out=-90, looseness=1.50] (40.center) to (36.center);
		\draw (41.center) to (42.center);
		\draw (46.center) to (45.center);
	\end{pgfonlayer}
\end{tikzpicture}}
\end{equation}
\end{center}
We then get the following equality of diagrams:
\begin{center}
\begin{equation}\label{fig:sepalg3}\scalebox{0.7}{
\begin{tikzpicture}
	\begin{pgfonlayer}{nodelayer}
		\node [style=2function] (0) at (3.25, 2.25) {$c$};
		\node [style=2function] (1) at (2, 4) {$m$};
		\node [style=none] (2) at (3, 2.5) {};
		\node [style=none] (3) at (1.75, 2.25) {};
		\node [style=none] (4) at (1.75, 3.75) {};
		\node [style=none] (5) at (2.25, 3.75) {};
		\node [style=none] (6) at (2, 4.25) {};
		\node [style=none] (7) at (2, 5) {};
		\node [style=none] (8) at (3.5, 2.5) {};
		\node [style=none] (9) at (3.5, 5) {};
		\node [style=none] (10) at (1, 5) {};
		\node [style=none] (11) at (1, 3.25) {};
		\node [style=2function] (12) at (-0.25, 4.5) {$m$};
		\node [style=none] (13) at (-0.5, 4.25) {};
		\node [style=none] (14) at (0, 4.25) {};
		\node [style=none] (15) at (-0.25, 4.75) {};
		\node [style=none] (16) at (-0.25, 5.5) {};
		\node [style=none] (17) at (-1.5, 5.5) {};
		\node [style=none] (18) at (-1.5, 3.25) {};
		\node [style=none] (19) at (-0.5, 3.25) {};
		\node [style=none] (20) at (0, 3.25) {};
		\node [style=none] (21) at (4.25, 3.5) {=};
		\node [style=2function] (22) at (7.25, 2.25) {$c$};
		\node [style=2function] (23) at (8.5, 4.25) {$m$};
		\node [style=none] (24) at (7.5, 2.5) {};
		\node [style=none] (25) at (8.75, 2.5) {};
		\node [style=none] (26) at (8.75, 4) {};
		\node [style=none] (27) at (8.25, 4) {};
		\node [style=none] (28) at (8.5, 4.5) {};
		\node [style=none] (29) at (8.5, 5.25) {};
		\node [style=none] (30) at (7.25, 2.5) {};
		\node [style=none] (31) at (7.25, 3.5) {};
		\node [style=2function] (34) at (6.25, 5.25) {$m$};
		\node [style=none] (35) at (6, 5) {};
		\node [style=none] (36) at (6.5, 5) {};
		\node [style=none] (37) at (6.25, 5.5) {};
		\node [style=none] (38) at (6.25, 6.25) {};
		\node [style=none] (39) at (5, 6.25) {};
		\node [style=none] (40) at (5, 4) {};
		\node [style=none] (41) at (6, 4) {};
		\node [style=none] (42) at (6.5, 4) {};
		\node [style=none] (43) at (10.25, 3.5) {=};
		\node [style=1function] (44) at (11, 3.5) {$u$};
		\node [style=2function] (45) at (13, 3.5) {$m$};
		\node [style=none] (46) at (11, 4.5) {};
		\node [style=none] (47) at (12.75, 3.25) {};
		\node [style=none] (48) at (13.25, 3.25) {};
		\node [style=none] (49) at (13, 3.75) {};
		\node [style=none] (50) at (12, 4.5) {};
		\node [style=none] (51) at (12, 2.5) {};
		\node [style=none] (52) at (12.75, 2.5) {};
		\node [style=none] (53) at (13.25, 2.5) {};
		\node [style=none] (54) at (13, 4.5) {};
		\node [style=none] (55) at (14.25, 3.5) {=};
		\node [style=1function] (56) at (15.75, 3.5) {$\Id_W$};
		\node [style=none] (57) at (15.75, 3.75) {};
		\node [style=none] (58) at (15.75, 4.5) {};
		\node [style=none] (59) at (15.75, 3.25) {};
		\node [style=none] (60) at (15.75, 2.75) {};
		\node [style=none] (61) at (11, 3.75) {};
	\end{pgfonlayer}
	\begin{pgfonlayer}{edgelayer}
		\draw [in=90, out=-90, looseness=1.25] (5.center) to (2.center);
		\draw (4.center) to (3.center);
		\draw (7.center) to (6.center);
		\draw (9.center) to (8.center);
		\draw (10.center) to (11.center);
		\draw [bend left=90, looseness=1.50] (10.center) to (7.center);
		\draw (16.center) to (15.center);
		\draw [bend left=270, looseness=1.50] (16.center) to (17.center);
		\draw (17.center) to (18.center);
		\draw [bend right=90, looseness=2.00] (18.center) to (19.center);
		\draw (19.center) to (13.center);
		\draw [bend left=90, looseness=2.00] (11.center) to (20.center);
		\draw (20.center) to (14.center);
		\draw [in=90, out=-90, looseness=1.25] (27.center) to (24.center);
		\draw (26.center) to (25.center);
		\draw (29.center) to (28.center);
		\draw (31.center) to (30.center);
		\draw (38.center) to (37.center);
		\draw [bend left=270, looseness=1.50] (38.center) to (39.center);
		\draw (39.center) to (40.center);
		\draw [bend right=90, looseness=2.00] (40.center) to (41.center);
		\draw (41.center) to (35.center);
		\draw (42.center) to (36.center);
		\draw [in=90, out=-90, looseness=1.50] (42.center) to (31.center);
		\draw [bend left=90, looseness=1.75] (46.center) to (50.center);
		\draw (50.center) to (51.center);
		\draw [bend right=90, looseness=2.25] (51.center) to (52.center);
		\draw (52.center) to (47.center);
		\draw (48.center) to (53.center);
		\draw (54.center) to (49.center);
		\draw (59.center) to (60.center);
		\draw (58.center) to (57.center);
		\draw (46.center) to (61.center);
	\end{pgfonlayer}
\end{tikzpicture}}
\end{equation}

\end{center}
We claim the following: 
\begin{lemma}
The algebra $\winf^{\T_{sep}}$ is isomorphic to $K[D]$. The natural projection $\winf\to\winf^{\T_{sep}}$ sends every closed connected diagram to $D=\dim(W)$. 
\end{lemma}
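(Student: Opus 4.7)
The plan is to prove both assertions simultaneously. First I would show, by induction on the total number of structure-tensor boxes ($m$, $u$, $c$) appearing in a closed connected diagram, that modulo the tensor ideal $\I^{\T_{sep}}$ the diagram equals the trivial loop $\Tr(\Id_W) = D$. Once this is established, the projection $\winf \to \winf^{\T_{sep}}$ factors through $K[D]$, since $\winf$ is the polynomial algebra on closed connected diagrams. To upgrade this to the isomorphism $\winf^{\T_{sep}} \cong K[D]$, I would observe that for each $n\in\N$ the standard commutative separable algebra $K^n$ (pointwise multiplication $m$, constant unit $u$, and separability idempotent $c=\sum_i e_i\otimes e_i$) is a model of $\T_{sep}$, hence its character of invariants descends to a character $\chi_n:\winf^{\T_{sep}}\to K$ with $\chi_n(D)=n$. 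Since these values are pairwise distinct, no nonzero polynomial in $D$ can vanish in $\winf^{\T_{sep}}$, giving injectivity of the map $K[D]\hookrightarrow\winf^{\T_{sep}}$.

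The inductive reduction is the substantive part. In the base case (no boxes) the diagram is a single loop equal to $\Tr(\Id_W)=D$. For the inductive step, if the diagram contains a $u$-box, then its unique output strand must terminate at an $m$-input (since neither $u$ nor $c$ has any input strand), and commutativity of $m$ followed by the unit axiom $m\circ(u\otimes\Id_W)=\Id_W$ collapses this pair to a plain strand, reducing the total box count by two while preserving connectivity. If instead there is no $u$-box but some $c$-box has both outputs entering a common $m$-box, commutativity of $m$ lets us apply the axiom $m\circ c=u$ from Equation~\ref{fig:sepalg}, replacing the pair by a single $u$-box; this drops the count by one and reduces to the previous case on the next step of the induction.

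The main obstacle is the remaining case, in which no $u$-box is present and every $c$-box has its two outputs routed to distinct $m$-boxes. Here I would use the bimodule identity (the second equality of Equation~\ref{fig:sepalg}) to slide $c$-boxes across $m$-boxes, together with associativity and commutativity of $m$ to merge consecutive multiplications along a common output strand, until the two outputs of some $c$-box end up incident to a single $m$-box, thereby reducing to the preceding case. A cleaner alternative that sidesteps a delicate termination argument is to appeal directly to the derived identity of Equation~\ref{fig:sepalg3}, which asserts that the cyclic configuration of one $c$ together with two $m$'s (with one $m$-output fed back as an input) is the identity strand; in any closed connected diagram of this case such a subconfiguration can be exposed after associativity/commutativity normalization, and invoking it strictly decreases the number of $c$-boxes, completing the induction.
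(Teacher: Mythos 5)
Your overall strategy coincides with the paper's: a diagrammatic induction that removes boxes via the unit axiom, the relation $mc=u$, and the derived identities, and your closing observation that the models $K^n$ give characters with pairwise distinct values of $D$, so that $K[D]$ injects into $\winf^{\T_{sep}}$, is a welcome addition which the paper leaves implicit. The problem is in the hard case (no $u$-box, every $c$-box feeding two \emph{distinct} $m$-boxes). Your first route is, as you admit, missing a termination argument, and the ``cleaner alternative'' you fall back on is not correct as stated: it is not true that a subconfiguration as in Equation \ref{fig:sepalg3} can always be exposed by associativity/commutativity alone. Concretely, take the closed connected diagram with one $c$-box and two $m$-boxes in which the first output of $c$ enters one input of $m_1$ whose output is fed back into its own other input, and the second output of $c$ enters one input of $m_2$ whose output is likewise fed back into its own other input. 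This diagram has no $u$-boxes and the two legs of $c$ go to distinct $m$-boxes, so it lies in your hard case; but no $m$-output feeds a \emph{different} $m$-box, associativity needs two composed occurrences of $m$ to apply and commutativity only swaps inputs, so the pattern of Equation \ref{fig:sepalg3} (which requires one $m$-output entering a second $m$-box, plus an external input strand to play the role of $a$) can never be produced. Your induction step therefore does not go through on this diagram.

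What is needed is exactly the identity of Equation \ref{fig:sepalg2}, which the paper invokes alongside Equation \ref{fig:sepalg3}: after following the chain of $m$-boxes from a leg of $c$ until it closes and reassociating the cycle into a single self-looped multiplication, either the remaining input of that loop is a product involving some strand other than $c'$ (then Equation \ref{fig:sepalg3} applies, as in your argument), or the loop sits on $c'$ alone, and then Equation \ref{fig:sepalg2} replaces the $c$-box together with the looped $m$ by a $u$-box on the other leg. In the example above this second alternative is the one that occurs, and it reduces the diagram to a $u$ feeding a self-looped $m_2$, hence to $D$ by the unit axiom. Your proof is easily repaired by adding this case (alternatively, one application of the bimodule identity of Equation \ref{fig:sepalg} to a self-looped $m$ brings both legs of $c$ into a common $m$-box and returns you to your $mc=u$ case), but as written the key reduction is incomplete.
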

\begin{proof}
Let $Di$ be a closed connected diagram. We will show that we can reduce $Di$ to $D$ using the axioms in $\T_{sep}$. We shall do so by induction on the number $n$ of appearances of the tensor $c$ in $Di$. If $n=0$ then $Di$ contains only the tensors $u$ and $m$. Since $Di$ is a closed diagram, the number of $u$-boxes and the number of $m$-boxes must be equal. Since only the tensor $m$ contains input strings, the output of every $u$-box will be connected to one of the input strings of $m$. But since $u$ is a unit for $m$, we can replace this by $\Id_W$. We can get rid of all appearances of $u$ in this way. Since a closed diagram cannot have only $m$-boxes, because the number of input and output strings will not be balanced, we are left with a closed connected diagram with no $u$-,$m$-, or $c$-boxes. Such a diagram can only be the trace of $\Id_W$, or $D=\dim(W)$.

Consider now the case where $n>0$. If $Di$ contains a diagram of the form $mc$ then we can use  
the axioms in Figure \ref{fig:sepalg} to reduce these two boxes to the box $u$. This also reduces the number of appearances of $c$ by one. 
In the general case, since $m$ is the only type of box with input strings, if we have any $c$-box then its output strings will be connected to an input string of some $m$-box. We follow the output string of this $m$-box, which must be connected as well to the input string of another $m$-box, or to the other input string of itself. Since the number of $m$-boxes in the diagram is finite, a circle will be closed eventually. Since $m$ is associative and commutative, we can find an equivalent diagram to $Di$ which contains a diagram which appear in Figure \ref{fig:sepalg2} or \ref{fig:sepalg3}. In any case, we can remove a sub-diagram which contains the $c$-box and replace it with either $\Id_w$ or $u$. Again, this will reduce the number of $c$-boxes by one, and the induction is completed. 
\end{proof}
Thus, for every $t\in K$ we have a character $\chi_t:\winf\to K$ which sends all closed connected diagrams to $t$. It is easy to see that $(\chi_t)$ is an additive one-parameter family. 
We claim the following:
\begin{proposition}
All the characters $\chi_t$ are good characters.
\end{proposition}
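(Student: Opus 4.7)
The plan is to apply Proposition \ref{prop:familymain} to the additive one-parameter family $(\chi_t)$, using the special collection $C = \{0, 1, 2, \ldots\} \subseteq K$. This requires two ingredients: first, that $\chi_n$ is a good character for every $n \in \N$; and second, a uniform bound, independent of $n$, on $\dim(Con^{p,q}/\rad(\pair^{p,q}_{\chi_n}))$ for each fixed $(p,q)$. Once both are in place, additivity lets us conclude that $\chi_t$ is good for every $t \in K$.

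For the first ingredient, I would exhibit $\chi_n$ as the character of invariants of a concrete structure in $\Vec_K$. Take $K^n$ with the pointwise product $m_n$, the unit $u_n = (1, \ldots, 1)$, and the separability idempotent $c_n = \sum_{i=1}^n e_i \otimes e_i$, where $e_1, \ldots, e_n$ is the standard basis. A direct check shows this is a model of $\T_{sep}$, and by evaluating any closed connected diagram on this structure (which reduces to a trace involving only the diagonal idempotents $e_i \otimes e^i$) one sees that every such diagram takes value $n$; hence the character of invariants is exactly $\chi_n$. Since $\Vec_K$ is a $K$-good category, Proposition \ref{prop:halfmainthm1} shows that $\chi_n$ is good.

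For the second ingredient, observe that $\chi_t$ factors through $\winf \twoheadrightarrow \winf^{\T_{sep}} \to K$, so the tensor ideal $\I_{\T_{sep}}$ evaluates to zero under the pairing induced by $\chi_t$. Consequently, for every $t$ and every $(p,q)$,
\[
\dim\bigl(Con^{p,q}/\rad(\pair^{p,q}_{\chi_t})\bigr) \leq \dim \Hom_{\C_{univ}^{\T_{sep}}}(W^{\ot q}, W^{\ot p}),
\]
and this upper bound is independent of $t$. It thus suffices to show the right-hand side is finite, i.e.\ that the hom-spaces of $\C_{univ}^{\T_{sep}}$ are finite dimensional. The plan is to establish a normal form: using the axioms \eqref{fig:sepalg}, \eqref{fig:sepalg2}, and \eqref{fig:sepalg3}, together with associativity and commutativity of $m$ (and the analogous properties for $\Delta = ev(m \ot c)$), every diagram representing a morphism $W^{\ot q} \to W^{\ot p}$ reduces to a canonical representative indexed by a set partition of the $p+q$ external strings; each block of the partition encodes one connected cluster of strings that is glued together through a sequence of $m$-, $\Delta$-, $u$-, and $c$-boxes. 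The number of such partitions is the Bell number $B(p+q)$, which is finite.

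The main obstacle is the normal-form argument. Concretely, one must show that the moves allowed by $\T_{sep}$ reduce the relative positions of $m$-, $\Delta$-, $u$-, $c$-boxes within a connected cluster to an irrelevant choice: (i) associativity and commutativity let us rearrange any tree of $m$-boxes (resp.\ $\Delta$-boxes) freely; (ii) equation \eqref{fig:sepalg} removes any $m \circ c$ pair; (iii) the unit axiom removes any $u$ attached to an input of $m$; (iv) equations \eqref{fig:sepalg2}--\eqref{fig:sepalg3} collapse internal loops formed by a $c$ together with a chain of $m$'s, reducing them either to $\Id_W$ or to $u$. Iterating these reductions on the finite set of internal vertices terminates, leaving only the combinatorial data of which external strings are connected. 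Once this is verified, both hypotheses of Proposition \ref{prop:familymain} hold for the additive family $(\chi_t)$ with special collection $\N$, and the conclusion that every $\chi_t$ is good follows immediately.
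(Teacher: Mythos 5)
Your overall strategy --- show $\chi_n$ is good for all $n\in\N$ via the algebra $K^n$, obtain a uniform bound on the codimensions of the radicals, and invoke Proposition \ref{prop:familymain} for the additive family --- is the same as the paper's, and your first ingredient coincides with the paper's argument. The divergence is in the uniform bound, and there your argument has a genuine gap: the hom-spaces of $\C_{univ}^{\T_{sep}}$ are \emph{not} finite dimensional over $K$. Already $\Hom_{\C_{univ}^{\T_{sep}}}(\one,\one)=\winf^{\T_{sep}}\cong K[D]$, and in general a diagram may contain closed connected components, which the axioms of $\T_{sep}$ only reduce to the invariant $D$ (this is precisely the content of the lemma preceding the proposition), not to a scalar in $K$. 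Hence $\Hom_{\C_{univ}^{\T_{sep}}}(W^{\ot q},W^{\ot p})$ is a $K[D]$-module spanned by partition diagrams, and elements such as $D^k\cdot \Id_W$ are linearly independent over $K$ (apply the functors $F_{K^n}$ for varying $n$ to see this). So the right-hand side of your displayed inequality is infinite, and your normal form, which only treats clusters attached to external strings, does not by itself deliver the bound you need.

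The gap is repairable: work modulo $\rad(\pair^{p,q}_{\chi_t})$ rather than modulo $\I_{\T_{sep}}$. If $Di=\delta\cdot Di'$ with $\delta$ a closed diagram, then $\chi_t(\pair^{p,q}(Di\ot d))=\chi_t(\delta)\,\chi_t(\pair^{p,q}(Di'\ot d))$ for every $d\in Con^{q,p}$, so $Di-\chi_t(\delta)Di'\in\rad(\pair^{p,q}_{\chi_t})$; combined with your reduction of boundary-touching clusters to canonical ``spiders'', this gives $\dim\bigl(Con^{p,q}/\rad(\pair^{p,q}_{\chi_t})\bigr)\le B(p+q)$ (the number of set partitions of the $p+q$ external points), uniformly in $t$, which is all that Proposition \ref{prop:familymain} requires. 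Note that the paper sidesteps the diagram combinatorics entirely at this step: it only needs the bound at the special values $t=n$, and obtains it from Lemma \ref{lem:cofinite} together with the observation that the image of $Con^{p,q}$ under $F_{K^n}$ lies in the $S_n$-invariants of $(K^n)^{p,q}$, whose dimension is bounded by the number of equivalence relations on $p+q$ points, independently of $n$. Either route works once your finiteness claim is corrected as above.
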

\begin{proof}
For $t=n$, a natural number, $\chi_n$ is the character of invariants associated to the structure $(K^n,m_n,u_n,c_n)$ where $m_n = \sum_{i=1}^n e_i\ot e^i\ot e^i$, $u_n= \sum_{i=1}^n e_i$, and $c_n = \sum_{i=1}^n e_i\ot e_i$. This is just the algebra $K^n$ with pointwise addition and multiplication. 
Since the dimension of this algebra is $n$ we see that $\chi_n$ is a good character. The automorphism group of the algebra $K^n$ is $S_n$, acting by permuting the basis. For every natural $a,b$ it holds that the dimension of $((K^n)^{a,b})^{S_n}$ is bounded by the number of equivalence relations on $\{1,2,\ldots, a+b\}$, which does not depend on $n$. The conditions of Proposition \ref{prop:familymain} are satisfied, so we see that indeed $\chi_t$ is a good character for every $t\in K$.
\end{proof}
Since $K^n$ is the only $n$-dimensional commutative separable algebra, it has a closed orbit, and $\C_{\chi_n}\cong \Rep(S_n)$. We thus get an interpolation of the categories $\Rep(S_n)$. The resulting category $\C_{\chi_t}$ is then the Deligne's category $\Rep(S_t)$. The family of good characters that correspond to separable commutative algebras then just gives the algebra $K$.  

\subsection{Commutative Frobenius algebras and the categories $\ul{\dcob}_{\alpha}$}\label{subsec:dcob}
We consider now the structure of commutative Frobenius algebras. Recall that such a structure $W$ has the following structure tensors: multiplication $m:W\ot W\to W$, unit $u:\one\to W$, and counit $\epsilon:W\to\one$. The axioms for a commutative Frobenius algebra are the following: $m$ is an associative and commutative multiplication, $u$ is a unit for $m$, and the pairing $\epsilon m:W\ot W\to \one$ is non-degenerate. As in the case of $\text{O}_t$, we will record the non-degeneracy by adding $c:\one\to W\ot W$ such that $ev(c\ot \epsilon m) = \Id_W$. 
We write $\Delta:W\to W\ot W$ for the dual of $m$ with respect to the pairing $\epsilon m$. One can prove that $\Delta$ is also given by $ev(c\ot m)$. We denote the theory of commutative Frobenius algebras by $\T_{ComFr}$. 
It is known (see 
Section 2.3. in \cite{KKO}) that commutative Frobenius algebras in a given category $\C$ correspond to 2-dimensional oriented topological quantum field theories (TQFTs) in $\C$. The correspondence between the 2-dimensional cobordisms and our diagrams is given by the following dictionary:
\begin{center}\scalebox{0.7}{
\begin{tikzpicture}
	\begin{pgfonlayer}{nodelayer}
		\node [style=none] (0) at (-5.25, -1.25) {};
		\node [style=none] (1) at (-4, -1.25) {};
		\node [style=none] (2) at (-3.25, -1.25) {};
		\node [style=none] (3) at (-2, -1.25) {};
		\node [style=none] (4) at (-4.25, 1.75) {};
		\node [style=none] (5) at (-3, 1.75) {};
		\node [style=none] (6) at (-4.5, 0.5) {};
		\node [style=none] (7) at (-2.75, 0.5) {};
		\node [style=2function] (8) at (-7.75, 0.75) {$m$};
		\node [style=none] (9) at (-7.5, 0.5) {};
		\node [style=none] (10) at (-8, 0.5) {};
		\node [style=none] (11) at (-7.75, 1) {};
		\node [style=none] (12) at (-7.5, -1) {};
		\node [style=none] (13) at (-8, -1) {};
		\node [style=none] (14) at (-7.75, 2.5) {};
		\node [style=none] (15) at (-6.25, 0.75) {=};
		\node [style=none] (16) at (-0.75, -1.25) {,};
		\node [style=1function] (17) at (0.75, 0.75) {$u$};
		\node [style=none] (18) at (0.75, 1) {};
		\node [style=none] (19) at (0.75, 2.25) {};
		\node [style=none] (20) at (2, 0.75) {=};
		\node [style=none] (21) at (3, 1.5) {};
		\node [style=none] (22) at (4.25, 1.5) {};
		\node [style=none] (23) at (5.25, -1.25) {,};
		\node [style=1function] (24) at (6.25, 0.75) {$\epsilon$};
		\node [style=none] (25) at (6.25, -0.75) {};
		\node [style=none] (26) at (6.25, 0.5) {};
		\node [style=none] (27) at (7.5, 0.75) {=};
		\node [style=none] (28) at (8.5, 1) {};
		\node [style=none] (29) at (9.75, 1) {};
		\node [style=none] (30) at (-5.25, -3.5) {};
		\node [style=none] (31) at (-4, -3.5) {};
		\node [style=none] (32) at (-2.75, -3.5) {};
		\node [style=none] (33) at (-1.5, -3.5) {};
		\node [style=2function] (34) at (-7.75, -3.5) {$c$};
		\node [style=none] (35) at (-7.5, -3.25) {};
		\node [style=none] (36) at (-8, -3.25) {};
		\node [style=none] (38) at (-6.25, -3.5) {=};
		\node [style=none] (39) at (-8, -2) {};
		\node [style=none] (40) at (-7.5, -2) {};
		\node [style=none] (41) at (-0.5, -4.75) {,};
		\node [style=2function] (50) at (0.5, -3.5) {$\Delta$};
		\node [style=none] (51) at (0.75, -1.75) {};
		\node [style=none] (52) at (0.25, -1.75) {};
		\node [style=none] (53) at (0.5, -5.25) {};
		\node [style=none] (54) at (0.75, -3.25) {};
		\node [style=none] (55) at (0.25, -3.25) {};
		\node [style=none] (56) at (0.5, -3.75) {};
		\node [style=none] (57) at (2, -3.5) {=};
		\node [style=none] (58) at (3, -2.25) {};
		\node [style=none] (59) at (4.25, -2.25) {};
		\node [style=none] (60) at (5.5, -2.25) {};
		\node [style=none] (61) at (6.75, -2.25) {};
		\node [style=none] (62) at (4.25, -5.25) {};
		\node [style=none] (63) at (5.5, -5.25) {};
		\node [style=none] (64) at (4, -4) {};
		\node [style=none] (65) at (5.75, -4) {};
		\node [style=none] (66) at (7.5, -4.75) {,};
		\node [style=1function] (67) at (8.5, -3.5) {$\Id_W$};
		\node [style=none] (68) at (8.5, -3.25) {};
		\node [style=none] (69) at (8.5, -1.75) {};
		\node [style=none] (70) at (8.5, -3.75) {};
		\node [style=none] (71) at (8.5, -5.25) {};
		\node [style=none] (72) at (9.5, -3.5) {=};
		\node [style=none] (73) at (10.5, -2.25) {};
		\node [style=none] (74) at (11.75, -2.25) {};
		\node [style=none] (75) at (10.5, -5) {};
		\node [style=none] (76) at (11.75, -5) {};
		\node [style=none] (77) at (13, -4.75) {.};
	\end{pgfonlayer}
	\begin{pgfonlayer}{edgelayer}
		\draw [bend left=90] (1.center) to (0.center);
		\draw [style=black dashed, bend left=270] (1.center) to (0.center);
		\draw [bend left=90] (3.center) to (2.center);
		\draw [style=black dashed, bend left=270] (3.center) to (2.center);
		\draw [bend left=90] (5.center) to (4.center);
		\draw [bend left=270] (5.center) to (4.center);
		\draw [in=90, out=-90, looseness=1.25] (4.center) to (6.center);
		\draw [in=90, out=-90] (6.center) to (0.center);
		\draw [in=90, out=-90] (5.center) to (7.center);
		\draw [in=90, out=-90] (7.center) to (3.center);
		\draw [bend left=90, looseness=3.25] (1.center) to (2.center);
		\draw (14.center) to (11.center);
		\draw (10.center) to (13.center);
		\draw (9.center) to (12.center);
		\draw (19.center) to (18.center);
		\draw [bend left=90] (22.center) to (21.center);
		\draw [bend left=270] (22.center) to (21.center);
		\draw [bend right=90, looseness=2.75] (21.center) to (22.center);
		\draw (26.center) to (25.center);
		\draw [bend left=90] (29.center) to (28.center);
		\draw [bend left=90, looseness=2.75] (28.center) to (29.center);
		\draw [style=black dashed, bend left=270] (29.center) to (28.center);
		\draw [bend left=90] (31.center) to (30.center);
		\draw [bend left=270] (31.center) to (30.center);
		\draw [bend left=90] (33.center) to (32.center);
		\draw [bend left=270] (33.center) to (32.center);
		\draw [bend left=90, looseness=1.50] (32.center) to (31.center);
		\draw [bend right=90, looseness=1.25] (30.center) to (33.center);
		\draw (39.center) to (36.center);
		\draw (40.center) to (35.center);
		\draw (56.center) to (53.center);
		\draw (52.center) to (55.center);
		\draw (51.center) to (54.center);
		\draw [bend left=90] (59.center) to (58.center);
		\draw [bend left=270] (59.center) to (58.center);
		\draw [bend left=90] (61.center) to (60.center);
		\draw [bend left=270] (61.center) to (60.center);
		\draw [bend left=90, looseness=1.50] (60.center) to (59.center);
		\draw [bend left=90] (63.center) to (62.center);
		\draw [style=black dashed, bend left=270] (63.center) to (62.center);
		\draw [in=-90, out=90, looseness=1.25] (62.center) to (64.center);
		\draw [in=-90, out=90] (64.center) to (58.center);
		\draw [in=-90, out=90, looseness=1.50] (63.center) to (65.center);
		\draw [in=-90, out=90] (65.center) to (61.center);
		\draw (69.center) to (67);
		\draw (70.center) to (71.center);
		\draw [bend left=90] (74.center) to (73.center);
		\draw [bend left=270] (74.center) to (73.center);
		\draw [bend left=90] (76.center) to (75.center);
		\draw [style=black dashed, bend left=270] (76.center) to (75.center);
		\draw (73.center) to (75.center);
		\draw (74.center) to (76.center);
	\end{pgfonlayer}
\end{tikzpicture}}
\end{center}
Composition of morphisms is given by gluing of cobordisms. The closed connected diagrams then correspond to closed connected two-dimensional orientable manifolds. Such manifolds are classified by their genus.
We write $M_g$ for the connected oriented two-dimensional manifold of genus $g$. If $A$ is a commutative Frobenius algebra and $\chi_A$ is its character of invariants, we can consider $\chi_A(M_g)$, where we understand that we evaluate $\chi$ on the closed connected diagram that corresponds to a manifold of genus g. We write $\alpha_g:=\chi_A(M_g)\in K$. The sequence $(\alpha_g)$ also appears in \cite{KKO}. 

We thus see that characters $\winf^{\T_{ComFr}}\to K$ are in one-to-one correspondence with sequences $(\alpha_g)$ of elements in $K$. 
The category $\C_{\chi}$ is the category $\ul{\dcob}_{\alpha}$ from \cite{KKO}, where $(\alpha_g)$ is the sequence that corresponds to the character $\chi$. 

We next determine the sequences which correspond to good characters. For this, define the \emph{handle} endomorphism $x=m\Delta:A\to A$. This is given by the cobordism 
\begin{center}\scalebox{0.6}{
\begin{tikzpicture}
	\begin{pgfonlayer}{nodelayer}
		\node [style=none] (0) at (0.75, -1.25) {};
		\node [style=none] (1) at (1.75, -1.25) {};
		\node [style=none] (2) at (2.5, -1.25) {};
		\node [style=none] (3) at (3.5, -1.25) {};
		\node [style=none] (4) at (1.5, 1.75) {};
		\node [style=none] (5) at (2.75, 1.75) {};
		\node [style=none] (6) at (1.25, 0.5) {};
		\node [style=none] (7) at (3, 0.5) {};
		\node [style=2function] (24) at (-2, -2.75) {$\Delta$};
		\node [style=none] (25) at (-1.75, 0) {};
		\node [style=none] (26) at (-2.25, 0) {};
		\node [style=none] (27) at (-2, -4.5) {};
		\node [style=none] (28) at (-1.75, -2.5) {};
		\node [style=none] (29) at (-2.25, -2.5) {};
		\node [style=none] (30) at (-2, -3) {};
		\node [style=none] (31) at (-0.5, -1.25) {=};
		\node [style=none] (32) at (0.75, -1.25) {};
		\node [style=none] (33) at (1.75, -1.25) {};
		\node [style=none] (34) at (2.5, -1.25) {};
		\node [style=none] (35) at (3.5, -1.25) {};
		\node [style=none] (36) at (1.75, -4.25) {};
		\node [style=none] (37) at (3, -4.25) {};
		\node [style=none] (38) at (1.5, -3) {};
		\node [style=none] (39) at (3.25, -3) {};
		\node [style=2function] (40) at (-2, 0.25) {$m$};
		\node [style=none] (41) at (-2, 0.5) {};
		\node [style=none] (42) at (-2, 2) {};
	\end{pgfonlayer}
	\begin{pgfonlayer}{edgelayer}
		\draw [bend left=90] (5.center) to (4.center);
		\draw [bend left=270] (5.center) to (4.center);
		\draw [in=90, out=-90, looseness=1.25] (4.center) to (6.center);
		\draw [in=90, out=-90] (6.center) to (0.center);
		\draw [in=90, out=-90] (5.center) to (7.center);
		\draw [in=90, out=-90] (7.center) to (3.center);
		\draw [bend left=90, looseness=3.25] (1.center) to (2.center);
		\draw (30.center) to (27.center);
		\draw (26.center) to (29.center);
		\draw (25.center) to (28.center);
		\draw [bend left=90, looseness=3.25] (34.center) to (33.center);
		\draw [bend left=90] (37.center) to (36.center);
		\draw [style=black dashed, bend left=270] (37.center) to (36.center);
		\draw [in=-90, out=90, looseness=1.25] (36.center) to (38.center);
		\draw [in=-90, out=90] (38.center) to (32.center);
		\draw [in=-90, out=90, looseness=1.50] (37.center) to (39.center);
		\draw [in=-90, out=90] (39.center) to (35.center);
		\draw (42.center) to (41.center);
	\end{pgfonlayer}
\end{tikzpicture}}
\end{center}
It holds that $\Tr(x^n) = \alpha_{n+1}$ for every $n\in\N$. We consider now the function $Z(X) = \sum_i\alpha_iX^i$. By Corollary \ref{cor:endshape} we see that if $(\alpha_g)$ corresponds to a good character then this function has the form $\frac{P(X)}{Q(X)}$, where $Q(X)$ has no multiple roots, $\deg(P)+1\leq \deg(Q)$, and $Q(0)\neq 0$. We call rational functions of this specific form \emph{loyal}.
We claim the following:
\begin{proposition}[See Theorem 3.4. in \cite{KKO}]
The sequence $(\alpha_g)$ corresponds to a good character if and only if $Z(X)$ is a loyal rational function.
\end{proposition}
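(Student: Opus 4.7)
My plan is to prove each direction separately. For the forward direction, I would assume $\chi$ is good. By Theorem \ref{thm:main1}, $\C_{\chi}$ is a semisimple $K$-good category, so the handle endomorphism $x = m\Delta \in \End_{\C_{\chi}}(W)$ lies in a finite-dimensional semisimple algebra. I would then apply Corollary \ref{cor:endshape} to $x$ to conclude that $\sum_{n \geq 0} \Tr(x^n)X^n$ is a good rational function of the form $P_1(X)/Q_1(X)$, with $Q_1$ squarefree, $Q_1(0) \neq 0$, and the appropriate degree bound. Using the identity $\Tr(x^n) = \alpha_{n+1}$ stated in the paragraph before the proposition, I would rewrite
$$Z(X) \;=\; \alpha_0 + X\cdot \frac{P_1(X)}{Q_1(X)} \;=\; \frac{\alpha_0 Q_1(X) + X P_1(X)}{Q_1(X)},$$
and read off from the properties inherited from $Q_1$ that $Z$ is loyal.

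For the backward direction I would realize an arbitrary loyal $Z$ using elementary building blocks. First, I would apply partial fraction decomposition to write $Z(X) = \sum_{i=1}^{n} c_i/(1-\lambda_i X)$ with distinct $\lambda_i \in K^{\times}$; the degree condition in the definition of loyal ensures that no polynomial part appears. As building blocks, for each $e \in K^{\times}$, I would consider the one-dimensional commutative Frobenius algebra $K$ with $u(1)=1$ and $\epsilon(1)=e$. A direct rank-one calculation shows that the handle is multiplication by $1/e$, so the associated character $\chi_e$ satisfies $Z_{\chi_e}(X) = e/(1-X/e)$; by Proposition \ref{prop:halfmainthm1} each such $\chi_e$ is good, as it arises from a structure in $\Vec_K$. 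Using that the good characters form a $K$-algebra under pointwise addition and scalar multiplication (Theorem \ref{thm:main3}), I would set $\chi' := \sum_{i=1}^{n}(c_i\lambda_i)\,\chi_{1/\lambda_i}$, which is automatically good; a brief computation yields $Z_{\chi'}(X) = \sum_i c_i/(1-\lambda_i X) = Z(X)$. Since a character on $\winf^{\T_{ComFr}}$ is determined by its values on the closed connected diagrams $M_g$, the agreement $\chi'(M_g) = \alpha_g = \chi(M_g)$ for every $g$ forces $\chi' = \chi$, so $\chi$ is good.

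The hard part will be the backward direction: one must realize an arbitrary loyal rational function through the rather limited palette of rank-one Frobenius algebras combined via the $K$-algebra operations on good characters, and check that the resulting character genuinely agrees with $\chi$ on all of $\winf^{\T_{ComFr}}$ and not just the generators. The loyal degree condition plays a dual role here, simultaneously matching the output of Corollary \ref{cor:endshape} in the forward direction and permitting a polynomial-free partial fraction decomposition in the backward direction.
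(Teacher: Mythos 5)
Your forward direction is essentially the paper's: apply Corollary \ref{cor:endshape} to the handle endomorphism in the semisimple $K$-good category $\C_{\chi}$ and use $\Tr(x^n)=\alpha_{n+1}$. But be careful with the degree bookkeeping: from $Z(X)=\alpha_0+X\cdot P_1(X)/Q_1(X)$ with $\deg(P_1)\leq\deg(Q_1)$ you can only conclude that the numerator of $Z$ has degree at most $\deg(Q_1)+1$, i.e.\ that $Z$ lies in the span of $\{1,X\}\cup\{\tfrac{1}{1-\la X}\}_{\la\in K^{\times}}$. This is in fact what ``loyal'' has to mean (the displayed inequality $\deg(P)+1\leq\deg(Q)$ in the paper is inconsistent with its own proof, which takes exactly this span as the loyal functions); under the literal inequality the ``only if'' direction is simply false, since e.g.\ the Frobenius algebra $K[y]/(y^2)$ with $\epsilon(1)=0,\ \epsilon(y)=1$ lives in $\Vec_K$, hence gives a good character, and its $Z$ is the polynomial $2X$.

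This is where your backward direction has a genuine gap. You invoke the (mis-stated) degree condition to claim that the partial fraction decomposition of a loyal $Z$ has no polynomial part, and you then realize $\sum_i c_i/(1-\la_i X)$ by scalar combinations of rank-one Frobenius algebras. Your rank-one computation and the use of Theorem \ref{thm:main3} are fine, and your final identification step (characters of $\winf^{\T_{ComFr}}$ are determined by the values $\alpha_g=\chi(M_g)$) is the same implicit step the paper makes. But loyal functions do contain a linear polynomial part: $1$ and $X$ are loyal, and neither lies in the span of the functions $\tfrac{1}{1-\la X}$, so your palette of semisimple one-dimensional algebras cannot reach them. The missing idea, which is exactly how the paper completes the argument, is to use the non-semisimple two-dimensional algebra $A=K[y]/(y^2)$ with its two counits $\epsilon_1,\epsilon_2$: there the handle $x=m\Delta$ is nilpotent, so $\alpha_g=0$ for $g\geq 2$, $\alpha_1=2$, and $\alpha_0$ is $0$ or $1$, producing the loyal functions $2X$ and $1+2X$. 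Together with your $\tfrac{1}{1-\la X}$ blocks these span all loyal functions, and only then does the linear-combination argument close the proof.
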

\begin{proof}
We have already seen that if $(\alpha_g)$ comes from a good character then $Z(X)$ is loyal. In the other direction, the set of loyal functions is a linear subspace of $K[[X]]$ spanned by the functions $\{1,X\}\cup\{\frac{1}{1-\la X}\}_{\la\in K^{\times}}$. Since we know that the set of good characters is a linear subspaces, it will be enough to show that all functions in this basis correspond to good characters.

For $\frac{1}{1-\la X}$ we have the following algebra: take $A=K$ with basis element $e$ and dual basis $f$. The structure tensors are given by $m=e\ot f\ot f$, $u=e$, $\epsilon= \frac{1}{\la}f$, $c = \lambda e\ot e$, and $\Delta = \lambda e\ot e\ot f$. We get that $x=\la \Id_K$, so $\Tr(x^n) = \la^{n}$, and $\alpha_0=\epsilon(u) = \frac{1}{\la}$. We get the rational function $\frac{1}{\la}\sum_i (\la x)^i = \frac{1}{\la}\frac{1}{1-\la X}$, which is the basis element $\frac{1}{1-\la X}$ rescaled by $\frac{1}{\la}$. Since the set of good characters is closed under multiplication by scalars this is good enough.

For the rational functions $1$ and $X$ we consider the following algebras: Let $A = k[y]/(y^2)$. 
We define $\epsilon_1(1) = 0$, $\epsilon_1(y) = 1$ and $\epsilon_2(1)= \epsilon_2(y)=1$. 
For $i=1,2$ let $A_i$ be the Frobenius algebra with $\epsilon=\epsilon_i$. We will calculate the resulting rational functions. 
A direct calculation shows that in both cases the handle endomorphism $x=m\Delta$ is given by $1\mapsto 2y$ and $y\mapsto 0$. This is a nilpotent endomorphism, and therefore all the scalars $\alpha_i$ for $i\geq 2$ vanish. We have that $\alpha_1=\dim A = 2$ in both cases. We have $\alpha_0 = \epsilon(u)$, so it is $0$ for $A_1$ and $1$ for $A_2$. We thus get the rational functions $2X$ and $1+2X$. Since we get a spanning set for the loyal rational functions we are done.
\end{proof}
The collection of good characters arising from commutative Frobenius algebras is closed under sums, products, and multiplication by a scalar in $K$, it thus form a subalgebra of $K^P$. By considering the proposition above we see that this algebra is isomorphic to $KM\oplus Ke$ where $M$ is the monoid $(K,\times)$, we write $U_{\la}$ for a basis of $KM$, $e$ is an idempotent, and $U_{\la}e = \la e$. Here $e$ corresponds to the characters with $\alpha$-sequence $(0,1,0,0,\ldots)$. 
\begin{remark}
The paper \cite{KKO} also studies the case where the ground field is of characteristic $p$. 
\end{remark}

\subsection{Wreath products with $S_t$}
If $G$ is any group one can construct the wreath product $S_n \wr G := S_n\ltimes G^n$, where $S_n$ acts by permuting the entries in $G^n$. 
We will show here how we can interpolate this construction when $G$ is any reductive group. We will use Theorem \ref{thm:mainallG}, that says that $G$ is the automorphism group of some algebraic structure with a closed orbit. Our construction generalizes the construction of Knop from \cite{Knop}, where he constructed the categories $\Rep(S_t\wr G)$ for $G$ a finite group. The construction we present here work for general good characters as well. 

Let $(A,(y_i))$ be an algebraic structure in some good category $\D$. Write $\chi:\winf\to K$ for the character of invariants of $(A,(y_i))$. 
We will define now a new algebraic structure, and show that its direct sums can be interpolated. 
We define $\ol{A} = \one\oplus A$, and we consider the new algebraic structure $(\ol{A},(y_1,\ldots, y_r,P,u,\epsilon,m,c))$ where $P:\ol{A}\to\ol{A}$ is the projection with kernel $A$ and image $\one$, $u:\one\to \ol{A}$ is the natural inclusion, $\epsilon:\ol{A}\to \one$ is the natural projection, $m:\ol{A}\ot\ol{A}\to \ol{A}$ is given by $\ol{A}\ot\ol{A}\stackrel{P\ot 1}{\to} \one\ot \ol{A}\cong \ol{A}$, and $c:\one\to \ol{A}\ot\ol{A}$ is given by $\one\cong\one\ot\one\stackrel{u\ot u}{\to} \ol{A}\ot\ol{A}$.
We will write the new structure as $(\ol{A},(z_i))$. 
We claim the following:
\begin{lemma}
Assume that $(A,(y_i))$ is an algebraic structure in $Vec_K$ with automorphism group $G$. Then the automorphism group of $(\ol{A},(z_i))^{\oplus n}$ is $S_n\wr G$
\end{lemma}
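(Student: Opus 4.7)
The plan is to use each structure tensor of $(\ol{A},(z_i))^{\oplus n}$ in turn to pin down the form of an arbitrary automorphism $\phi$, peeling off first the $S_n$-part via a commutative algebra structure on $\one^{\oplus n}$ and then the $G$-part via the block-diagonal action of the $y_j$'s.

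First I would write $\ol{A}^{\oplus n} = \one^{\oplus n}\oplus A^{\oplus n}$ and let $e_i\in \one^{\oplus n}$ denote the image of $u$ in the $i$-th copy. Unpacking the direct sum of structures, the tensor $\tilde P$ is the block-diagonal projection onto $\one^{\oplus n}$; $\tilde u$ is the diagonal element $\sum_i e_i$; $\tilde \epsilon$ is the sum of the $\one$-projections; $\tilde c$ equals $\sum_i e_i\otimes e_i$; $\tilde m$ is block-diagonal (zero on inputs from different copies); and each $\tilde{y_j}$ is block-diagonal and additionally vanishes on any input containing a $\one$-factor. Preservation of $\tilde P$ immediately splits $\phi = \phi_\one\oplus \phi_A$ with $\phi_\one\in \GL(\one^{\oplus n})$ and $\phi_A\in \GL(A^{\oplus n})$.

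Next I would observe that $(\tilde m,\tilde u,\tilde c)$ restricted to $\one^{\oplus n}$ gives precisely the separable commutative algebra structure on $K^n$: pointwise multiplication, all-ones unit, and diagonal separability idempotent. Thus $\phi_\one$ is a unital algebra automorphism of $K^n$, forcing $\phi_\one=\sigma$ for some $\sigma\in S_n$ that permutes the primitive idempotents $e_i$. I then use the mixed-arity restriction $\tilde m:\one^{\oplus n}\otimes A^{\oplus n}\to A^{\oplus n}$, whose slice $\tilde m(e_i\otimes -)$ is exactly the projection $\pi_i^A$ onto the $i$-th copy of $A$; $\phi$-equivariance gives $\phi_A\circ \pi_i^A = \pi_{\sigma(i)}^A\circ \phi_A$, so $\phi_A(\iota_i^A(a)) = \iota_{\sigma(i)}^A(g_i a)$ for some $g_i\in \GL(A)$. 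Finally, preservation of each $\tilde{y_j}$ on inputs drawn from a single copy reduces to the identity $g_i\circ y_j = y_j\circ g_i^{\otimes q_j}$, i.e.\ $g_i\in G$. The converse, that any tuple $(\sigma;g_1,\ldots,g_n)\in S_n\wr G$ preserves all the $\tilde{z_i}$, is immediate from the block structure.

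The main point requiring care is correctly identifying the commutative algebra structure on $\one^{\oplus n}$ induced by the direct sums $\tilde m,\tilde u,\tilde c$ and then tracking how the mixed-arity tensor $\tilde m:\one^{\oplus n}\otimes A^{\oplus n}\to A^{\oplus n}$ couples the $S_n$-action on $\one^{\oplus n}$ to the coordinate permutation on $A^{\oplus n}$; once this coupling is established, the remaining verifications are routine per-copy calculations inside $G$.
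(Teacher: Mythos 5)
Your proposal is correct and follows essentially the same route as the paper: use the (direct sum of the) projection $P$ to split any automorphism along $\one^{\oplus n}\oplus A^{\oplus n}$, identify the restriction of $m$ to $B=\one^{\oplus n}$ with the algebra $K^n$ (whose automorphism group is $S_n$), and then use multiplication by the idempotents $e_i$ (the $B$-module structure on $A^{\oplus n}$) together with preservation of the $y_j$'s to see that the remaining data is a tuple of elements of $G$; the paper merely organizes the last step as computing the kernel of the map to $S_n$, whereas you treat a general automorphism directly. The only nitpick is notational: for $p_j>1$ the single-copy condition should read $g_i^{\ot p_j}\circ y_j = y_j\circ g_i^{\ot q_j}$, which is still exactly the statement that $g_i\in G$.
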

\begin{proof}
We write $\ol{A}^{\oplus n} = A^{\oplus n}\oplus \one^{\oplus n}$. Write $B:=\one^{\oplus n} = Ke_1\oplus\cdots\oplus Ke_n$. There is an action of $S_n\wr G = S_n\ltimes G^n$ on $(\ol{A},(z_i))^{\oplus n}$, where $G^n$ acts diagonally on $A^{\oplus n}$ and trivially on $B$, and $S_n$ permutes the direct summands. We will show that these are all the possible automorphisms of the structure $(\ol{A},(z_i))^{\oplus n}$. 

The constructible map $P$ is just the projection onto $B$ with kernel $A^{\oplus n}$. Thus, every automorphism of $(\ol{A},(z_i))^{\oplus n}$ must preserve the direct sum decomposition $A^{\oplus n}\oplus B$. The map $m$ restricted to $B$ gives an algebra structure $m_B:B\ot B\to B$ on $B$. This algebra is just isomorphic to $\one^n$, and so its multiplication is given by the rule $e_i\cdot e_j = \delta_{i,j}e_i$. We thus see that restriction gives a group homomorphism $\phi$ from the automorphism group of $(\ol{A},(z_i))^{\oplus n}$ to the automorphism group of the algebra $B$, which is just $S_n$. 

We consider now an automorphism $g$ in $\Ker(\phi)$, and show that it must be in $G^n$. Indeed, the morphism $m$ induces a $B$-module structure on $A^{\oplus n}$. Since $g$ acts trivially on $B$, it sends $e_i\cdot W$ to itself. Thus, for every $i$ we get an induced automorphism $g_i$ of $(A,(y_i))$. The assignment $g\mapsto (g_1,\ldots, g_n)$ thus gives us the desired isomorphism between $Ker(\phi)$ and $G^n$ and we are done.
\end{proof}

We next claim the following:
\begin{lemma}
Assume that $(A,(y_i))$ is a $\Vec_K$-structure with a closed orbit and automorphism group $G$. Then $(\ol{A},(z_i))^{\oplus n}$ has a closed orbit and therefore $\C_{n\cdot\ol{\chi}}\cong \Rep(S_n\wr G)$. 
\end{lemma}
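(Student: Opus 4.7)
The plan is to reduce to Proposition~\ref{prop:mainvecstr} by showing that the $\GL_d$-orbit of $(\ol A,(z_i))^{\oplus n}$ in $U_d$ (where $d=n(1+\dim A)$) is closed. By Section~6 of \cite{meirUIR}, taking direct sums of $\Vec_K$-structures adds their characters of invariants, so the character of invariants of $(\ol A,(z_i))^{\oplus n}$ is $n\cdot\ol\chi$, and by the previous lemma its $\GL_d$-stabilizer is $S_n\wr G$.

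For the closedness of the orbit, I would invoke the standard GIT criterion valid in characteristic zero: for a reductive algebraic group acting on an affine variety, an orbit is closed if and only if its stabilizer is reductive. Applied to $(A,(y_i))$, the assumption that it has a closed orbit makes $G=\Aut(A,(y_i))$ reductive. Consequently $G^n$ is reductive, and because $S_n$ is finite the semidirect product $S_n\wr G=S_n\ltimes G^n$ is itself reductive: any normal unipotent subgroup projects trivially to the finite group $S_n$, hence is an $S_n$-stable normal unipotent subgroup of $G^n$, which must be trivial since $G^n$ is reductive. Applying the criterion in the converse direction to the stabilizer $S_n\wr G$ then yields that the orbit of $(\ol A,(z_i))^{\oplus n}$ is closed.

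Once the orbit is closed, Proposition~\ref{prop:mainvecstr} applies directly with $(Z,(z_i))=(\ol A,(z_i))^{\oplus n}$ itself, giving
$$\C_{n\ol\chi}\cong\Rep\bigl(\Aut((\ol A,(z_i))^{\oplus n})\bigr)\cong\Rep(S_n\wr G)$$
by the previous lemma.

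The main obstacle is the direction \emph{reductive stabilizer $\Rightarrow$ closed orbit} of the GIT criterion. The converse (closed orbit $\Rightarrow$ reductive stabilizer) is classical and is used implicitly elsewhere in the paper via Matsushima's theorem, but the direction required here needs either a citation to a standard GIT reference (such as Popov-Vinberg, or Luna's slice theorem in characteristic zero) or a self-contained verification. A possible self-contained route is to compare the unique closed orbit $(Z,(z_i))$ in the orbit closure to $(\ol A,(z_i))^{\oplus n}$: both have character $n\ol\chi$, the stabilizer of $(Z,(z_i))$ contains (a conjugate of) $S_n\wr G$ by specialization, and since $S_n\wr G$ has the same dimension as the stabilizer of $(Z,(z_i))$ (by the orbit-stabilizer relation and dimension count of the two $\GL_d$-orbits), the two orbits must coincide.
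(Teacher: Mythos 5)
The final step of your argument (closed orbit plus the computation of the automorphism group, then Proposition \ref{prop:mainvecstr}) is fine, but the step that produces closedness rests on a false principle. The ``criterion'' you invoke is only a one-way implication: Matsushima's theorem says that a closed orbit of a reductive group on an affine variety has reductive stabilizer, but the converse direction --- reductive stabilizer implies closed orbit --- which is exactly the direction you need, fails in general. A minimal counterexample already lives inside the framework of this paper: take the type $((1,0))$ structure given by a nonzero vector $w$ in a one-dimensional space. Its stabilizer in $\GL_1(K)$ is trivial, hence reductive, yet its orbit is $K\setminus\{0\}$, which is not closed in $U_1\cong \mathbb{A}^1$. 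So knowing that $\Aut((\ol{A},(z_i))^{\oplus n})=S_n\wr G$ is reductive gives no information about closedness of the orbit.

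Your proposed self-contained repair does not close this gap either. The assertion that the stabilizer of the unique closed orbit $(Z,(z_i))$ in the orbit closure contains a conjugate of $S_n\wr G$ ``by specialization'' is unjustified: degenerating a point to the boundary of its orbit closure does not embed the stabilizer of the original point into the stabilizer of the limit. And the dimension count is circular: the boundary of an orbit closure is a union of orbits of strictly smaller dimension, so equality of the dimensions of the two orbits (equivalently, of the two stabilizers) is precisely the statement to be proved, not an input. The paper avoids any such GIT criterion and instead uses Corollary \ref{cor:closedorbit}: closedness of the orbit is equivalent to non-degeneracy of the trace pairing on the spaces $X^{(p,q)}$ of constructible elements. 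It verifies this first for $\ol{A}$ (using the projection $P$ and the dual pair $u,\epsilon$ to split the pairing into blocks), and then for $(\ol{A},(z_i))^{\oplus n}$ by showing that its constructible elements are exactly the $S_n$-invariants of the constructible elements of the auxiliary structure $(\ol{A}^{\oplus n},(z_{ij}))$, where each summand carries its own tensors and non-degeneracy is immediate; reductivity of $S_n$ enters only to conclude that the pairing remains non-degenerate on the invariant subspace. If you wish to keep a group-theoretic route, you would need a genuine two-way criterion such as Luna's (closedness of $\GL_d\cdot x$ is equivalent to closedness of $N_{\GL_d}(H)\cdot x$ for a reductive subgroup $H$ of the stabilizer), which requires substantially more verification than quoting reductivity of $S_n\wr G$.
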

\begin{proof} 
By Corollary \ref{cor:closedorbit} we know that having a closed orbit is equivalent to the non-degeneracy of the pairings $X^{(p,q)}\ot X^{(q,p)}\to K$, where $X^{(p,q)}\subseteq A^{p,q}$ is the subspace of constructible elements. Since we will deal with a few different structures here we will write $X^{(p,q)}(A,(y_i))$ to make it clear what structure we are considering. We first show that $(\ol{A},(z_i))$ has a closed orbit. Using the projection $P:\ol{A}\to\ol{A}$ it follows easily that $X^{(p,q)}(\ol{A},(z_i))$ splits as $$X^{(p,q)}(\ol{A},(z_i)) = \bigoplus_{i_1,\ldots, i_{p+q}=1}^2X^{(p,q)}(\ol{A},(z_i))\cap (A_{i_1}\ot A_{i_2}\ot\cdots\ot A_{i_p}\ot (A_{i_{p+1}})^*\ot\cdots\ot (A_{i_{p+q}})^*),$$ where $A_1 = \one$ and $A_2 = A$. It then follows that we need to prove that the restriction of the pairing is non-degenerate on every one of these direct summands, using the fact that the space $X^{(q,p)}(\ol{A},(z_i))$ has a similar decomposition. But this follows easily from the fact that the pairing $X^{(p',q')}(A,(y_i))\ot X^{(q',p')}(A,(y_i))\to K$ is non-degenerate for $p'\leq p$ and $q'\leq q$, and from the fact that $u$ and $\epsilon$ provide a dual basis of $\one\subseteq \ol{A}$ and $\one^*\cong \one\subseteq \ol{A}^*$. 

Consider now the structure $(\ol{A},(z_i))^{\oplus n}$. Write $\ol{A}_i$ for the $i$-th copy of $\ol{A}$ in $\ol{A}^{\oplus n}$. We will begin by considering the structure $(\ol{A}^{\oplus n},z_{11},\ldots, z_{1n},\ldots,z_{l1},\ldots,z_{ln})$, where we write $z_{ij}$ for the tensor $z_i$ applied to $\ol{A}_j$. For convenience, we assume that $\Id_{\ol{A}}$ is one of the tensors in $\{z_j\}$, so that we get all the projections $\ol{A}^{\oplus n}\to \ol{A}_j\to \ol{A}^{\oplus n}$. 
The fact that the pairing on $X^{(p,q)}(\ol{A},(z_i))\ot X^{(q,p)}(\ol{A},(z_i))\to K$ is non-degenerate easily implies that the pairings for $(\ol{A}^{\oplus n},(z_{ij}))$ are non-degenerate. 
The group $S_n$ acts on $\ol{A}^{\oplus n}$ by permuting the direct summands $\ol{A}_i$. We will show that $X^{(p,q)}((\ol{A},(z_i))^{\oplus n}) = X^{(p,q)}(\ol{A}^{\oplus n},(z_{ij}))^{S_n}$. This will prove that the pairing is non-degenerate, since the field $K$ has characteristic zero, and $S_n$ is therefore a reductive group.

For this, notice first that all the structure tensors of $(\ol{A},(z_i))^{\oplus n}$ are $f_i:=z_{i1} + z_{i2} + \cdots + z_{in}$, which are invariant under the action of $S_n$. It follows easily that all the constructible tensors for $(\ol{A},(z_i))^{\oplus n}$ are $S_n$-invariant as well, and we thus have an inclusion $X^{(p,q)}((\ol{A},(z_i))^{\oplus n})\subseteq X^{(p,q)}(\ol{A}^{\oplus n},(z_{ij}))^{S_n}$. 

To prove the inclusion in the other direction, we notice that by reordering the tensors, $X^{(p,q)}(\ol{A}^{\oplus n},(z_{ij}))$ is spanned by elements of the form $c(\sigma,\tau,v_1,\ldots,v_n):=L^{(p)}_{\sigma}v_1\ot\cdots\ot v_nL^{(q)}_{\tau}$, where $v_i\in X^{(a_i,b_i)}(\ol{A}_i,z_{1i},\ldots, z_{li})$, $\sum_i a_i = p$, $\sum_i b_i = q$, $\sigma\in S_p$ and $\tau\in S_q$. By the reductivity of $S_n$ it will be enough to show that $\sum_{\mu\in S_n}\mu(c(\sigma,\tau,v_1,\ldots, v_n))$ is in $X^{(p,q)}((\ol{A},(z_i))^{\oplus n})$. Since the action of $S_n$ commutes with the action of $S_p\times S_q$, we can assume without loss of generality that $\sigma = \Id$ and $\tau = \Id$. We next use the fact that $c = \sum_i e_i\ot e_i$ and $(u\ot u)-c = \sum_{i\neq j} e_i\ot e_j$, using the notation for the basis of $B$ from the previous lemma. The constructible morphism $m$ enables us to consider $\ol{A}^{\oplus n}$ as a $B$-module. Write now $v_i = v_i(z_{1i},\ldots, z_{li})$. Using the elements $c$ and $u\ot u$ we can construct the element $u_a:=\sum_{\mu\in S_n} e_{\mu(1)}^{\ot a_1}\ot\cdots\ot e_{\mu(n)}^{\ot a_n}$ and  
$u_b:=\sum_{\mu\in S_n} e_{\mu(1)}^{\ot b_1}\ot\cdots\ot e_{\mu(n)}^{\ot b_n}$. It then holds that 
$\sum_{\mu\in S_n}\mu(c(\Id,\Id,v_1,\ldots, v_n)) = u_a(v'_1\ot\cdots\ot v'_n)u_b$, where we identify $u_a$ and $u_b$ with their action on $(\ol{A}^{\oplus n})^{p,q}$ via $m$, and where $v'_i = v_i(f_1,\ldots,f_l)$. 
But the last expression belongs to $X^{(p,q)}((\ol{A},(z_i))^{\oplus n})$, so we are done. 
\end{proof}

Following the above lemmas, we give the following definition:
\begin{definition} 
If $\chi:\winf\to K$ is the character of invariants of $(A,(y_i))$ we write $\ol{\chi}:K[\ol{X}]_{aug}\to K$ for the character of invariants of $(\ol{A},(z_i))$, where $K[\ol{X}]_{aug}$ is the universal ring of invariants for structures of type $((p_1,q_1),\ldots, (p_r,q_r),(1,1),(1,0),(0,1),(1,2),(2,0))$. 
If $\C_{\chi} \cong \Rep(G)$ we write $\Rep(S_t\wr G):=\C_{t\cdot\ol{\chi}}$ as by the above lemma it interpolates the categories $\Rep(S_n\wr G)$. 
\end{definition}
All the characters $t\cdot\ol{\chi}$ are good characters. This follows from the fact that $\ol{\chi}$ is a good character, and the set of good characters is closed under multiplication by a scalar. 
It was necessary to form the auxiliary structure $\ol{A}$ because it is possible that the automorphism group of $A^{\oplus n}$ will be strictly bigger than $S_n\wr G$. This happens for example for the families $\GL_t,\text{O}_t$, and $\text{Sp}_t$. The construction we have works as well also in case $\C_{\chi}$ does not admit a fiber functor to $\Vec_K$. 
 
\subsection{Finite modules over a discrete valuation ring and the categories $\Rep(\Aut(M_{a_1,\ldots, a_r}))$}
Let $\Ow$ be a discrete valuation ring with a uniformizer $\pi$ and a finite residue field of cardinality $q$.
We will consider here finitely generated modules over $\Ow_r:=\Ow/(\pi^r)$, where $r>0$ is some integer.
Such a module $M$ gives rise to the group algebra $KM=\text{span}_K\{U_m\}_{m\in M}$.
This group algebra is a Hopf algebra. The multiplication is given by $U_{m_1}U_{m_2} = U_{m_1+m_2}$, the comultiplication by $\Delta(U_m) = U_m\ot U_m$, the unit is $U_0$, the counit is $\epsilon(U_m)=1$ for every $m\in M$, and the antipode is given by $S(U_m) = U_{-m}$. In addition, for every $x\in \Ow_r$ we have a Hopf albgera homomorphism $T_x:KM\to KM$ given by $T_x(U_m) = U_{mx}$. These homomorphisms satisfy in addition the conditions $T_xT_y = T_{xy}$, $m(T_x\ot T_y)\Delta = T_{x+y}$, and $T_1=\Id_{KM}$. 
We can thus consider $(KM,m,\Delta,u,\epsilon,S,(T_x)_{x\in \Ow_r})$ as an algebraic structure. Let $\T$ be the theory containing the axioms for a commutative and cocommutative Hopf algebra, the axioms saying that $T_x$ is a homomorphism of Hopf algebras for every $x\in \Ow_r$, the axiom $T_xT_y = T_{xy}$ for all $x,y\in \Ow_r$, the axiom $m(T_x\ot T_y)\Delta = T_{x+y}$ for all $x,y\in \Ow_r$, and the axiom $T_1=\Id_{KM}$. 
\begin{claim} Isomorphism classes of models for $\T$ inside $\Vec_K$ are in one-to-one correspondence with isomorphism types of finite modules over $\Ow_r$. 
\end{claim}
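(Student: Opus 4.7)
The plan is to establish the correspondence by building the forward map directly and inverting it via the structure theory of finite dimensional commutative cocommutative Hopf algebras over $K$. For the forward direction, given a finite $\Ow_r$-module $M$, I would take $(KM, m,\Delta,u,\epsilon,S,(T_x))$ as defined before the claim, and check the axioms of $\T$ directly on the basis $\{U_m\}_{m\in M}$. The only nontrivial axiom is the additivity relation, which reduces to
\[m(T_x\ot T_y)\Delta(U_m) = U_{xm}\cdot U_{ym} = U_{(x+y)m} = T_{x+y}(U_m),\]
using that the group operation on $M$ is written additively; the remaining axioms are immediate from $T_x(U_m)=U_{xm}$ and the $\Ow_r$-module identities $1\cdot m = m$, $(xy)m=x(ym)$.

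For the reverse direction, let $(H,m,\Delta,u,\epsilon,S,(T_x))$ be a model of $\T$ in $\Vec_K$. Since $H$ is a finite dimensional commutative and cocommutative Hopf algebra over the algebraically closed characteristic zero field $K$, the classical Cartier--Kostant--Milnor--Moore structure theorem (together with the fact that finite dimensional commutative Hopf algebras in characteristic zero are \'etale, hence spanned by their group-like elements) gives a canonical isomorphism $H\cong KG$, where $G=G(H)$ is the finite abelian group of group-like elements of $H$. Next I would observe that each $T_x\colon H\to H$ is a Hopf algebra endomorphism, so it sends group-likes to group-likes, and hence restricts to a group endomorphism $t_x\in\End_{\Z}(G)$. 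The relations of $\T$ translate literally: $T_1=\Id$ gives $t_1=\Id_G$; $T_xT_y=T_{xy}$ gives $t_x\circ t_y=t_{xy}$; and evaluating $m(T_x\ot T_y)\Delta$ on a group-like $g$ yields $t_x(g)+t_y(g)=t_{x+y}(g)$, so $t_x+t_y=t_{x+y}$ in $\End_{\Z}(G)$. Thus $x\mapsto t_x$ is a (unital) ring homomorphism $\Ow_r\to \End_{\Z}(G)$, which is the same datum as an $\Ow_r$-module structure on $G$. Writing $M$ for $G$ with this module structure, one checks that the natural identification $KM\to H$ intertwines all the structure tensors.

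Finally, to promote this to a bijection on isomorphism classes, I would verify that an isomorphism of models $(KM_1,\ldots)\to (KM_2,\ldots)$ in $\Vec_K$ is precisely a Hopf algebra isomorphism $\phi\colon KM_1\to KM_2$ with $\phi T_x=T_x\phi$ for all $x\in\Ow_r$; such a $\phi$ is determined by a group isomorphism $M_1\to M_2$, and the commutation with the $T_x$'s is equivalent to $\Ow_r$-linearity. Conversely, every $\Ow_r$-linear isomorphism of modules lifts to such a $\phi$. Together with the two constructions above, this yields mutually inverse bijections between isomorphism classes of models of $\T$ in $\Vec_K$ and isomorphism types of finite $\Ow_r$-modules.

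The main obstacle is invoking the right structure theorem in the reverse direction: one must be sure that a finite dimensional commutative cocommutative Hopf algebra over $K$ is automatically a group algebra, and that every Hopf endomorphism is induced by a group endomorphism of the group-likes. Both assertions are standard in characteristic zero (the first follows from the Cartier--Kostant decomposition and \'etaleness of commutative Hopf algebras in char $0$; the second from preservation of group-likes and the fact that these span $H$), but they are the only non-routine ingredient of the argument.
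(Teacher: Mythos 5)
Your proposal is correct and follows essentially the same route as the paper: the forward direction is a direct verification on the basis $\{U_m\}$, and the reverse direction invokes the Cartier--Kostant--Milnor--Moore theorem to identify the Hopf algebra with a group algebra $KM$ of a finite abelian group, with the $T_x$'s restricting to group endomorphisms that endow $M$ with its $\Ow_r$-module structure. Your treatment is just somewhat more detailed, spelling out the verification that the ring relations transfer to $\End_{\Z}(M)$ and that isomorphisms of models correspond to $\Ow_r$-linear isomorphisms, which the paper leaves implicit.
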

\begin{proof} We have seen that if $M$ is a finite module over $\Ow_r$ then $KM$ is a model for the above theory $\T$. On the other hand, if $W$ is a model for $\T$, then by Cartier-Milnor-Moore-Kostant Theorem we know that a commutative cocomuutative Hopf algebra is necessarily the group algebra $KM$ of some finite abelian group $M$. The fact that $T_x:KM\to KM$ is a Hopf algebra homomorphism implies that $T_x$ arises from a group homomorphism $M\to M$. The other axioms ensure us that we get indeed a structure of an $\Ow_r$-module on $M$. 
\end{proof}

Since $\Ow$ is a principal ideal domain, the structure theorem for finitely generated modules over a PID applies here. Adapted to $\Ow/(\pi^r)$, we see that every module over $\Ow_r$ has the form 
$$M = (\Ow/(\pi))^{a_1}\oplus(\Ow/(\pi^2))^{a_2}\oplus\cdots\oplus (\Ow/(\pi^r))^{a_r},$$ for some $a_1,a_2,\ldots, a_r\in \N$.  Moreover, the tuple $(a_1,\ldots, a_r)\in \N^r$ is a complete set of invariants for $M$. We will write $M=M_{a_1,\ldots, a_r}$.

The integers $a_1,\ldots, a_r$ can be recovered from the associated character of invariants of $KM_{a_1,\ldots,a_r}$, which we denote by  $\chi_{a_1,\ldots, a_r}:\winf\to K$. Indeed, for every $i=1,2,\ldots, r$ it holds that \begin{equation}\label{eq:ci} c_i:=\Tr(T_{1+\pi^i}) = q^{a_1+2a_2+\cdots + ia_i + ia_{i+1}+\cdots + ia_r},\end{equation} where $q=|\Ow/(\pi)|$. This is because $\Tr(T_{1+\pi^i}) = |\{m\in M| (1+\pi^i)m=m\}| = 
\{m\in M|\pi^im=0\}| = |\Hom_{\Ow_r}(\Ow_i,M)|$. 

We can write $q^{a_i}$ in the form $\prod_j c_j^{x_{ij}}$ for some $x_{ij}\in \Z$. 
We will need the following lemma:
\begin{lemma}\label{lem:extensionhom}
Let $L\subseteq N$ be an inclusion of finitely generated $\Ow_r$-modules. Let $M$ be another $\Ow_r$-modules. Then there are elements $\alpha_i\in L$ and integers $n_i\in \{0,1,\ldots, r-1\}$ such that a homomorphism $\psi:L\to M$ can be extended to a homomorphism $N\to M$ if and only if the set of equations $\pi^{n_i}\xi_i = \psi(\alpha_i)$ has a solution $(\xi_i)_i\in M^r$. If $\psi$ is extendable to $N$, then there are $|\Hom_{\Ow_r}(N/L,M)|$ possible extensions. 
\end{lemma}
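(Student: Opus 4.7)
The plan is to reduce the extension problem for $\psi$ to a concrete system of linear equations by exploiting the structure theorem for modules over the principal ideal ring $\Ow_r = \Ow/(\pi^r)$.

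First, I would apply the structure theorem to $N/L$ to obtain a decomposition $N/L \cong \bigoplus_i \Ow/(\pi^{n_i})$ with $1 \leq n_i \leq r$ (summands with $n_i = r$ are free and impose no obstruction, so by relabeling I would omit them, leaving $n_i \in \{0,1,\ldots,r-1\}$ as in the statement; alternatively one retains them with $\alpha_i = 0$). Choose generators $\bar{\beta}_i$ of the cyclic summands, lift to $\beta_i \in N$, and set $\alpha_i := \pi^{n_i}\beta_i$. Since $\pi^{n_i}\bar{\beta}_i = 0$ in $N/L$, each $\alpha_i$ lies in $L$, as required.

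Next I would prove the equivalence. If $\Psi: N \to M$ extends $\psi$, then $\xi_i := \Psi(\beta_i)$ satisfies $\pi^{n_i}\xi_i = \Psi(\pi^{n_i}\beta_i) = \psi(\alpha_i)$. Conversely, given a solution $(\xi_i)$, define $\Psi(\ell + \sum c_i \beta_i) := \psi(\ell) + \sum c_i \xi_i$ for $\ell \in L$ and $c_i \in \Ow_r$. Every element of $N$ has such a representation because the $\bar{\beta}_i$ generate $N/L$. Well-definedness reduces to checking that if $\sum d_i \beta_i \in L$ then $\psi(\sum d_i \beta_i) = \sum d_i \xi_i$; and this is where the direct-sum decomposition of $N/L$ into cyclic summands is crucial: $\sum d_i\bar{\beta}_i = 0$ in $N/L$ forces $d_i \in (\pi^{n_i})$ for each $i$ independently, say $d_i = \pi^{n_i} e_i$, reducing the required identity to $\sum e_i\psi(\alpha_i) = \sum e_i \pi^{n_i}\xi_i$, which is an immediate consequence of the defining equations.

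Finally, for the counting assertion: when the system is solvable, its solution set is a torsor under the solutions of the homogeneous system $\pi^{n_i}\eta_i = 0$. These solutions form $\bigoplus_i \{\eta \in M : \pi^{n_i}\eta = 0\} = \bigoplus_i \Hom_{\Ow_r}(\Ow/(\pi^{n_i}),M) = \Hom_{\Ow_r}(N/L, M)$, so the number of extensions is $|\Hom_{\Ow_r}(N/L,M)|$.

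The only step requiring genuine care is the well-definedness argument in the second paragraph: one must use that the annihilator of $\bar{\beta}_i$ in $N/L$ is exactly $(\pi^{n_i})$ and that the $\bar{\beta}_i$ decompose $N/L$ as a direct sum, rather than merely generating it; without this, the relations among the $\beta_i$ modulo $L$ could be more complex than the diagonal relations $\pi^{n_i}\beta_i \in L$, and the system would fail to capture every obstruction.
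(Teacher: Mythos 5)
Your proof is correct and follows essentially the same route as the paper's: decompose $N/L$ into cyclic summands $\Ow/(\pi^{n_i})$, lift generators, set $\alpha_i=\pi^{n_i}\beta_i\in L$, characterize extendability by the solvability of $\pi^{n_i}\xi_i=\psi(\alpha_i)$, and count extensions via $\Hom_{\Ow_r}(N/L,M)$. The only difference is that you spell out the well-definedness of the extension (and the handling of free summands), which the paper's proof leaves implicit.
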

\begin{proof}
Write $N/L=Q = \bigoplus_i \langle \ol{q_i}\rangle$, where $\langle \ol{q_i}\rangle\cong \Ow_{n_i}$. Write $q_i$ for a preimage of $\ol{q_i}$ in $N$. Then $\pi^{n_i}q_i\in L$. We write $\pi^{n_i}q_i = \alpha_i\in L$. It then holds that $\psi$ is extendable to $N$ exactly when we can choose $\psi(q_i)$ such that $\pi^{n_i}\psi(q_i) = \psi(\alpha_i)$, as required. The second result follows from the fact that any two extensions of $\psi$ to $N$ differ by a homomorphism inflated from a homomorphism $Q\to M$. 
\end{proof}

\begin{proposition}\label{prop:boundmodules}
For every $a_1,\ldots, a_r,a,b\in \N$ write $n(a_1,\ldots, a_r,a,b)$ for the dimension of the space of constructible elements in $KM_{a_1,\ldots, a_r}^{a,b}$. Then there is a number $n(a,b)$ such that $n(a_1,\ldots, a_r,a,b)\leq n(a,b)$ for every $a_1,\ldots, a_r$. 
\end{proposition}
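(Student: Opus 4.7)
The plan is to show that the space of constructible elements in $KM_{a_1,\ldots,a_r}^{a,b}$ sits inside the $\Aut_{\Ow_r}(M_{a_1,\ldots,a_r})$-invariant subspace, and to bound the dimension of the invariants by a function of $a+b$ (and the fixed data $r, q$) that is independent of $(a_1,\ldots,a_r)$. Constructible morphisms are automatically preserved by automorphisms of the model, and by the previous claim the model automorphism group is $\Aut_{\Ow_r}(M)$. Since this group acts on $KM=K[M]$ by permuting the basis $\{U_m\}_{m\in M}$, the dimension of the invariants in $(KM)^{a,b}\cong K[M^{a+b}]$ equals the number of $\Aut_{\Ow_r}(M)$-orbits on $M^{a+b}$. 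It therefore suffices to bound this orbit count uniformly in $(a_1,\ldots,a_r)$.

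Write $n=a+b$ and identify $M^n$ with $\Hom_{\Ow_r}(\Ow_r^n, M)$. Two homomorphisms $\phi_1, \phi_2$ lie in the same $\Aut(M)$-orbit iff $\ker\phi_1=\ker\phi_2=:K$ and the induced embeddings $\Ow_r^n/K\hookrightarrow M$ are $\Aut(M)$-equivalent. The number of submodules $K\subseteq\Ow_r^n$ is finite and depends only on $n, r, q$, so it remains, for each finite $\Ow_r$-module $Q$ with at most $n$ generators, to bound the number of $\Aut(M)$-orbits on embeddings $\iota:Q\hookrightarrow M$.

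The key structural step is to show that the image $\iota(Q)$ always lies inside a direct summand $M'\subseteq M$ of bounded type. Writing $M=\bigoplus_k\Ow_k^{a_k}$ and projecting $\iota(Q)$ to each factor, Smith normal form over the chain ring $\Ow_k$ shows that any submodule of $\Ow_k^{a_k}$ generated by at most $n$ elements lies in a free summand of rank at most $n$. Thus $\iota(Q)\subseteq M'$ for some summand $M'$ of type $(\min(n,a_1),\ldots,\min(n,a_r))$, whose entries are bounded by $n$. By Krull--Schmidt cancellation, any two direct summands of $M$ of the same isomorphism type are $\Aut(M)$-conjugate, so up to the $\Aut(M)$-action we may replace $M'$ by a fixed summand of each possible type. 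Every $\Aut(M)$-orbit is then represented by some embedding $\iota:Q\hookrightarrow M'$ with $M'$ one of these fixed summands, and the number of such embeddings is at most $|M'|^n$, which is bounded purely in terms of $n, r, q$.

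The main obstacle is this structural reduction to a bounded summand $M'$, requiring Smith normal form for submodules of $\Ow_k^{a_k}$ together with Krull--Schmidt for finitely generated $\Ow_r$-modules. Once these are in place, the final bound assembles as a product of three finite quantities each depending only on $n, r, q$: the number of submodules $K\subseteq\Ow_r^n$; the number of possible types $(\min(n,a_1),\ldots,\min(n,a_r))\in\{0,1,\ldots,n\}^r$ for $M'$ (at most $(n+1)^r$); and the bound $|M'|^n$ on the number of homomorphisms $Q\to M'$ for each fixed $M'$. The product of these three quantities yields a uniform bound $n(a,b)$, as required.
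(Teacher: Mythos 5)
Your proof is correct, but it follows a genuinely different route from the paper's. You bound the constructible subspace by the full space of $\Aut_{\Ow_r}(M)$-invariants in $KM^{a,b}$ (the containment is automatic, since every module automorphism of $M$ preserves all the structure tensors of $KM$ and hence every constructible element; note you only need this easy direction, not the full identification of the model automorphism group), and then bound $\dim$ of the invariants by the number of $\Aut_{\Ow_r}(M)$-orbits on $M^{a+b}$, which you control uniformly via Smith normal form over $\Ow_k$ (any $\leq(a+b)$-generated submodule of $\Ow_k^{a_k}$ sits in a free summand of rank $\leq a+b$) together with Krull--Schmidt conjugacy of summands of a fixed type. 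All of these steps check out: the orbit count is indeed the invariant dimension for a permutation basis in characteristic zero, the kernel/embedding stratification of $\Hom_{\Ow_r}(\Ow_r^{a+b},M)$ is sound, and the three finite quantities you multiply depend only on $a+b$, $r$, $q$. The paper instead works directly with the constructible elements: it exhibits them explicitly as the tensors $R_{(N,(s_i),(l_j))}=\sum_{\psi:N\to M}U_{\psi(s_1)}\ot\cdots\ot e_{\psi(l_b)}$, checks closure of this family under tensor product, permutation and evaluation, and then uses Lemma \ref{lem:extensionhom} to rewrite each such element in terms of bounded data on the $\leq(a+b)$-generated submodule $L=\langle s_i,t_j\rangle$, producing a finite spanning set independent of $(a_1,\ldots,a_r)$. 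What the paper's route buys beyond the stated bound is the explicit description of the constructible elements and, as a byproduct, the fact that all scalar invariants are of the form $|\Hom_{\Ow_r}(Q,M)|$, hence products of the $c_i$ from Equation \ref{eq:ci}; this is exactly what the subsequent interpolation argument for the families $\psi^{(i)}_t$ relies on. Your route is shorter and more conceptual for the dimension bound itself (and your invariant upper bound becomes an equality precisely when the orbit is closed, by the corollaries in Section \ref{sec:vecstr}), but it does not by itself yield that description of the character values.
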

\begin{proof}
By the space of constructible elements we mean the image of $Con^{a,b}$ under $F_{KM}:\Hom_{\C_{univ}}(W^{\ot b}, W^{\ot a})\to \Hom_{\Vec_K}(KM^{\ot b},KM^{\ot a})$. 
Write $M=M_{a_1,\ldots, a_r}$, and write $\{e_m\}$ for the dual basis of $\{U_m\}$. For every $\Ow_r$-module $N$ and every two tuples of elements of $N$, $(s_1,\ldots,s_a)$ and $(l_1,\ldots,l_b)$, we define 
$$R_{(N,(s_i),(l_j))} = \sum_{\psi} U_{\psi(s_1)}\ot\cdots\ot U_{\psi(s_a)}\ot e_{\psi(l_1)}\ot\cdots\ot e_{\psi(l_b)}\in KM^{a,b},$$
where the sum is taken over all $\Ow_r$-module homomorphisms $\psi:N\to M$. 
We claim that all the constructible elements are of the form $R_{(N,(s_i),(t_j))}$ for some $(N,(s_i),(t_j))$. 
For the structure tensors this follows directly. For example: for the multiplication, $\sum_{m_1,m_2} U_{m_1+m_2}\ot e_{m_1}\ot e_{m_2}$, take $N=\Ow_r\mu_1\oplus \Ow_r\mu_2$, $s_1= \mu_1+\mu_2$ and $(l_1,l_2)=(\mu_1,\mu_2)$. Similar constructions hold for $\Delta,u,\epsilon,S$, and $T_x$. 

The set of tensors $R_{(N,(s_i),(l_j))}$ is closed under taking tensor products and under applying tensor permutations. If we can show that it is also closed under applying $ev$ then it will follow that it must contain all the constructible elements. For this, recall that $ev(U_m\ot e_{m'}) = \delta_{m,m'}$. Thus, after applying $ev$ to $R_{(N,(s_i),(t_j))}$ only the homomorphisms $\psi:N\to M$ for which $\psi(s_a)= \psi(t_b)$ will survive. These are in one-to-one correspondence with homomorphisms $N_1:=N/(s_a-t_b)\to M$. Write $\ol{n}$ for the image of $n\in N$ in $N_1$. We thus see that 
$$ev(R_{(N,(s_i),(l_j))}) = R_{(N_1,(\ol{s_1},\ldots, \ol{s_{a-1}}),(\ol{t_1},\ldots,\ol{t_{b-1}}))},$$
and all constructible elements are of the form $R_{(N,(s_i),(l_j))}$.

We need to show that for a given $(a,b)\in\N^2$ the elements $R_{(N,(s_i),(l_j))}$ span a vector space of a limited dimension in $KM^{a,b}$. For this, write $L = \langle s_1,\ldots, s_a,t_1,\ldots, s_b\rangle\subseteq N$ and write $N/L=Q$. The $\psi$-summand in the tensor $R_{(N,(s_i),(t_j))}$ is determined by the restriction of $\psi$ to $L$. 
By Lemma \ref{lem:extensionhom}, we see that there are elements $\alpha_1,\ldots, \alpha_w$ in $L$ and numbers $n_1,\ldots,n_w\in \{0,1,\ldots r-1\}$ such that a homomorphism $\psi:L\to M$ is extendable to $N$ if and only if the equation $\pi^{n_i}\xi = \psi(\alpha_i)$ has a solution in $M$ for every $i=1,\ldots w$. If $\psi$ is extendable to $N$, then it has exactly $\Hom_{\Ow_r}(Q,M)$ extensions. We can thus write $R_{(N,(s_i),(t_j)}$ as $|\Hom_{\Ow_r}(Q,M)|\sum_{\psi}U_{\psi(s_1)}\ot\cdots\ot U_{\psi(s_a)}\ot e_{\psi(l_1)}\ot\cdots\ot e_{\psi(l_b)}$, where the sum is taken over all homomorphisms $L\to M$ which satisfy the conditions given by the tuples $(\alpha_i)$ and $(n_i)$. 

Since there are only finitely many modules $L$ of rank $\leq a+b$, And since the possible tuples $(s_i),(t_j),(\alpha_i)$ and $(n_i)$ are all taken from finite sets, we get the desired result.
We also see that the scalar invariants that we get are all of the form $|\Hom_{\Ow_r}(Q,M)|$ for some $\Ow_r$-module $Q$. By writing $Q$ as the direct sum of cyclic modules, we see that every such invariant is a product of the elements $c_i$ from Equation \ref{eq:ci}.
\end{proof}

We can now prove that there exists an interpolation of the categories $\Rep(\Aut(M_{a_1,\ldots,a_r}))$. It holds that $$\chi_{a_1,\ldots a_r}\cong \chi_{a_1,0,\ldots, 0}\cdots \chi_{0,0,\ldots a_r}.$$ It will thus be enough to show that for every $i$ there is an interpolation of the family $\chi_{0,\ldots,0, a_i,0,\ldots, 0}$. For this, we use the fact that all the character values of $\chi_{0,\ldots,0,a_i,\ldots,0}$ are integer powers of $q^{a_i}$. We define a one-parameter family of character $\psi^{(i)}_t$ by the following formula: if $\chi_{0,\ldots,0,a_i,\ldots,0}(Di) = q^{na_i}$ then $\psi^{(i)}_t(Di) = t^n$. This gives us a multiplicative family, and all the conditions of Proposition \ref{prop:familymain} hold. Indeed, all the hom-spaces are of bounded finite dimension by Proposition \ref{prop:boundmodules}, and the special collection is given by the elements $\{q^i\}_{i\in \N}$. 
The categories $\Rep(\GL_t(\Ow_1)$ were also constructed by Deligne (unpublished). In \cite{Knop} Knop constructed the categories $\Rep(\GL_t(\Ow_r))$. We get here a bigger family of tensor categories, as for general values of $(t_1,\ldots, t_r)$ it holds that 
$$\C_{\psi^{(1)}_{t_1}\cdots \psi^{(r)}_{t_r}}\ncong \boxtimes_{i=1}^r\C_{\psi^{(i)}_{t_i}}$$ (in fact, there is no equivalence of categories even when all the parameters $t_i$ are positive integer powers of $q$.
\section*{Acknowledgments}
I would like to thank Pierre Deligne for discussion and comments on an earlier version of the paper. 
I would also like to thank L{\'o}r{\'a}nt Szegedy for his help with the tikzit package.

\end{document}